\numberwithin{equation}{section}
\theoremstyle{plain}
\newtheorem{theorem}{Theorem}[section]
\newtheorem{corollary}[theorem]{Corollary}
\newtheorem{lemma}[theorem]{Lemma}
\newtheorem{proposition}[theorem]{Proposition}
\newtheorem{conjecture}[theorem]{Conjecture}
\newtheorem{definition}[theorem]{Definition}
\theoremstyle{remark}
\newtheorem{remark}{Remark}[section]
\begin{document}

\date{\today} 

\title{On the Friedlander-Nadirashvili invariants of surfaces}

\author{Mikhail Karpukhin, Vladimir Medvedev}

\address{Mathematics 253-37, Caltech, Pasadena, CA 91125, USA
}
\email{mikhailk@caltech.edu}



\address{D\'{e}partement de Math\'{e}matiques et de Statistique, Pavillon Andr\'{e}-Aisenstadt, Universit\'{e} de Montr\'{e}al, Montr\'{e}al, QC, H3C 3J7, Canada \newline {\em and}
\newline Peoples' Friendship University of Russia (RUDN University), 6 Miklukho-Maklaya Street, Moscow, 117198, Russian Federation \newline {\em and}
\newline Faculty of Mathematics, National Research University Higher School of Economics, 6 Usacheva Street, Moscow, 119048, Russian Federation}

\email{medvedevv@dms.umontreal.ca}



\begin{abstract}
Let $M$ be a closed smooth manifold. In 1999, L.~Friedlander and N.~Nadirashvili introduced a new differential invariant $I_1(M)$ using  the first normalized nonzero eigenvalue of the Lalpace-Beltrami operator $\Delta_g$ of a Riemannian metric $g$. They defined it taking the  supremum of this quantity over all Riemannian metrics in each conformal class, and then taking the infimum over all conformal classes. By analogy we use $k$-th eigenvalues of $\Delta_g$ to define the invariants $I_k(M)$ indexed by positive integers $k$. In the present paper the values of these invariants on surfaces are investigated. We show that $I_k(M)=I_k(\mathbb{S}^2)$ unless $M$ is a non-orientable surface of even genus. For orientable surfaces and $k=1$ this was earlier shown by R.~Petrides. In fact L.~Friedlander and N.~Nadirashvili suggested that $I_1(M)=I_1(\mathbb{S}^2)$ for any surface $M$ different from $\mathbb{RP}^2$. We show that, surprisingly enough, this is not true for non-orientable surfaces of even genus, for such surfaces one has $I_k(M)>I_k(\mathbb{S}^2)$. We also discuss the connection  between the Friedlander-Nadirashvili invariants and the theory of cobordisms, and conjecture that  $I_k(M)$ is a cobordism invariant.


\end{abstract}

\maketitle


\newcommand\cont{\operatorname{cont}}
\newcommand\diff{\operatorname{diff}}

\newcommand{\dvol}{\text{dA}}
\newcommand{\GL}{\operatorname{GL}}
\newcommand{\myO}{\operatorname{O}}
\newcommand{\myP}{\operatorname{P}}
\newcommand{\eye}{\operatorname{Id}}
\newcommand{\myF}{\operatorname{F}}
\newcommand{\Vol}{\operatorname{Vol}}
\newcommand{\odd}{\operatorname{odd}}
\newcommand{\even}{\operatorname{even}}
\newcommand{\ol}{\overline}
\newcommand{\mye}{\operatorname{E}}
\newcommand{\myo}{\operatorname{o}}
\newcommand{\myt}{\operatorname{t}}
\newcommand{\irr}{\operatorname{Irr}}
\newcommand{\mydiv}{\operatorname{div}}
\newcommand{\re}{\operatorname{Re}}
\newcommand{\im}{\operatorname{Im}}
\newcommand{\can}{\operatorname{can}}
\newcommand{\scal}{\operatorname{scal}}
\newcommand{\tr}{\operatorname{trace}}
\newcommand{\sgn}{\operatorname{sgn}}
\newcommand{\SL}{\operatorname{SL}}
\newcommand{\myspan}{\operatorname{span}}
\newcommand{\mydet}{\operatorname{det}}
\newcommand{\SO}{\operatorname{SO}}
\newcommand{\SU}{\operatorname{SU}}
\newcommand{\specl}{\operatorname{spec_{\mathcal{L}}}}
\newcommand{\fix}{\operatorname{Fix}}
\newcommand{\id}{\operatorname{id}}
\newcommand{\grad}{\operatorname{grad}}
\newcommand{\singsup}{\operatorname{singsupp}}
\newcommand{\wave}{\operatorname{wave}}
\newcommand{\ind}{\operatorname{ind}}
\newcommand{\mynull}{\operatorname{null}}
\newcommand{\inj}{\operatorname{inj}}
\newcommand{\arcsinh}{\operatorname{arcsinh}}
\newcommand{\Spec}{\operatorname{Spec}}
\newcommand{\Ind}{\operatorname{Ind}}
\newcommand{\floor}[1]{\left \lfloor #1  \right \rfloor}

\newcommand\restr[2]{{
  \left.\kern-\nulldelimiterspace 
  #1 
  \vphantom{\big|} 
  \right|_{#2} 
  }}


\section{Introduction}

\subsection{Preliminaries} Let $(M,g)$ be a closed $n$-dimensional Riemannian manifold. Consider \textit{the Laplace-Beltrami operator} $\Delta=-\mydiv_g \circ \grad_g$. It is an elliptic self-adjoint operator of second order. Its spectrum is a discrete collection of non-negative eigenvalues with finite multiplicities,

\begin{align*}
0=\lambda_0(g) < \lambda_1(g) \leqslant \lambda_2(g) \leqslant ... \nearrow +\infty.
\end{align*}

We are interested in studying the extremal properties of $\lambda_k(g)$. To this end we consider $\lambda_k(g)$ as a functional on the space $\mathcal{R}(M)$ of Riemannian metrics on $M$,
\begin{align*}
 \lambda_k\colon \mathcal{R}(M) &\to \mathbb{R}_+,\\
g &\mapsto \lambda_k(g).
\end{align*}
However, it turns out that for any positive constant $t>0$ one has
$$
\lambda_k(tg)=\frac{\lambda_k(g)}{t},
$$
which is not convenient for our purposes.
Instead, we consider {\em normalized eigenvalues} defined by

\begin{align*}
\overline{\lambda}_k(M,g)=\lambda_k(g)\Vol(M,g)^{\frac{2}{n}},
\end{align*}
where $\Vol(M,g)$ stands for the volume of the Riemannian manifold $(M,g)$. 

\begin{theorem}[\cite{MR1198600, MR1213857,hassannezhad2011}]
\label{Kor}
One has the following bounds 
\begin{enumerate}[(i)] 
\item If $\dim M=2$, then there exists a constant $C>0$ depending only on the topology of $M$ such that
$$
\overline{\lambda}_k(M,g) \leqslant Ck.
$$ 
\item If $\dim M \geqslant 3$, then the functional $\overline{\lambda}_k(M,g)$ is not bounded from above on the space $\mathcal{R}(M)$.
\item In any dimension there exists a constant $C([g])>0$ depending only on the conformal class $[g]=\{e^{2\omega} g|\,\,\omega\in C^\infty(M)\}$ such that for every metric $\tilde{g} \in [g]$ one has
$$
\overline{\lambda}_k(M, \tilde{g}) \leqslant Ck^{\frac{2}{n}}.
$$
\end{enumerate}
\end{theorem}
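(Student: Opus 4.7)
The plan is to treat the three parts separately, as they rely on rather different techniques, while (i) and (iii) share a common capacity-based framework.

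For parts (i) and (iii), the main tool is a Grigor'yan--Netrusov--Yau style decomposition of a metric measure space: given a Radon measure $\mu$ on a metric space with a suitable doubling property, one can produce a family of $k$ disjoint annuli $\{2A_i \setminus A_i\}_{i=1}^k$ whose cores $A_i$ are also disjoint and each satisfy $\mu(A_i) \gtrsim \mu(X)/k$. First I would fix a background metric $g_0 \in [g]$ (for (iii)), or by uniformization a constant curvature metric in the conformal class (for (i)), and take $\mu$ to be the Riemannian volume of $\tilde g = e^{2\omega} g_0$. For each $i$ I would build a Lipschitz test function $\varphi_i$ supported on $2A_i$ and equal to $1$ on $A_i$, with gradient concentrated on the annular region. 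Applying the min-max principle to $\myspan\{\varphi_0,\dots,\varphi_k\}$ gives
$$
\lambda_k(\tilde g) \leq C \max_{0 \leq i \leq k} \frac{\int_M |\nabla \varphi_i|_{\tilde g}^2\, dV_{\tilde g}}{\int_M \varphi_i^2\, dV_{\tilde g}}.
$$
The numerator is conformally invariant in dimension $2$, and in higher dimensions is controlled by a conformal capacity estimate via H\"older's inequality; the denominator is bounded below by $\mu(A_i) \gtrsim \Vol(M,\tilde g)/k$. The resulting estimate yields $Ck^{2/n}$ in (iii) and the linear bound $Ck$ in (i). The dependence on topology alone in (i) follows from Hassannezhad's refinement, which separates the thick and thin parts of moduli space and uses compactness on the thick part together with a dedicated annular construction on the thin cusps.

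For part (ii), I would construct, for each prescribed $\Lambda>0$, a metric $g_\Lambda \in \mathcal{R}(M)$ with $\overline{\lambda}_1(g_\Lambda) \geq \Lambda$. The standard Colbois--Dodziuk construction performs a connected sum inside a small geodesic ball with a chain of $N$ small round pieces joined by thin cylinders. In dimension $n \geq 3$ the connecting tubes can be made so thin that they contribute negligible volume, while each bead carries a localized eigenfunction of large eigenvalue; after concentrating the total volume on the beads and rescaling, one arrives at $\overline{\lambda}_1$ as large as desired. This construction fails in dimension $2$ precisely because part (iii) and a conformal capacity argument force every $2$-dimensional thin tube to carry mass in the conformal sense.

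The main obstacle is the covering argument underlying (i) and (iii): producing annuli with both the cores $A_i$ and the enlarged shells $2A_i$ simultaneously disjoint while retaining a sharp capacity estimate is technically delicate. For (i), the further step of absorbing the dependence on the conformal class into a purely topological constant requires Hassannezhad's moduli space analysis and is by far the subtlest ingredient in the three statements.
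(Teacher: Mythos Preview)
The paper does not contain a proof of this theorem: it is stated as background with citations to Korevaar~\cite{MR1198600}, Colbois--Dodziuk~\cite{MR1213857}, and Hassannezhad~\cite{hassannezhad2011}, and is used throughout without further argument. There is therefore nothing in the paper to compare your proposal against.

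That said, your outline is a faithful sketch of the arguments in those references. Part~(iii) via a Grigor'yan--Netrusov--Yau annular decomposition and capacity/H\"older control is exactly Korevaar's method; part~(i) with the extra step of making the constant depend only on the genus is precisely Hassannezhad's refinement; and part~(ii) is the Colbois--Dodziuk thin-neck construction. One minor point: for~(i) you describe the topological independence as coming from a thick--thin analysis of moduli space, which is closer in spirit to later approaches; Hassannezhad's actual argument builds the annuli in two complementary families (one controlled by the conformal volume, one by a topological covering number related to the genus) and combines them directly, without invoking compactness on the thick part. This is a small difference in packaging rather than a genuine gap.
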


\begin{remark}
Theorem~\ref{Kor} holds for any compact manifold with smooth boundary if we replace $\overline{\lambda}_k(M,g)$ by $\overline{\lambda}^N_k(M,g)=\lambda^N_k(g) \Vol(M,g)^{\frac{2}{n}},$ where $\lambda^N_k(g)$ is the $k$-th Neumann eigenvalue of the metric $g$.
\end{remark}

Theorem~\ref{Kor} guarantees that the following quantities are finite
$$
\Lambda_k(M)=\sup_{g\in \mathcal{R}(M)} \overline{\lambda}_k(M,g),
$$
if $\dim M=2$;
$$
\Lambda_k(M,[g])=\sup_{\tilde{g} \in [g]}\overline{\lambda}_k(M, \tilde{g}),
$$
in any dimension. If $\dim M=2$, then we will often use the notation $\Sigma$ instead of $M$. 

The invariant $\Lambda_k(\Sigma)$ has been studied extensively in the last years (see, for example~\cite{MR0292357,MR674407,petrides,petrides2017existence, nadirashvili_preprint, karpukhin2017isoperimetric, nayatani, MR1415764, MR2378458, matthiesen2019handle, girouard-lagace} and references therein). The invariant $\Lambda_k(M,[g])$ is less studied (see, for instance \cite{petrides, el1996premiere, MR2015867, karpukhin-stern}). Below we recall some result which are relevant to our exposition.

\begin{theorem}[\cite{petrides2013rigidity,petrides}]
\label{roman}
Let $(M,g)$ be a closed $n$-dimensional Riemannian manifold not conformally diffeomorphic to the sphere $(\mathbb{S}^n,g_{can}),$ where $g_{can}$ is the standard round metric on $\mathbb{S}^n$. Then one has
$$
\Lambda_1(M,[g]) > \Lambda_1(\mathbb{S}^n, [g_{can}]).
$$
\end{theorem}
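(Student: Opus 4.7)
The plan is to combine a bubble test-metric construction, the Nadirashvili/El Soufi--Ilias characterization of critical metrics of $\bar\lambda_1$ in a conformal class, and a concentration-compactness analysis of maximizing sequences of conformal factors. The first step is to prove the weak inequality $\Lambda_1(M,[g]) \geq \Lambda_1(\mathbb{S}^n,[g_{can}])$. I would fix a point $p \in M$ and, using conformal normal coordinates together with the fact that $(\mathbb{S}^n,g_{can})$ is conformally the one-point compactification of Euclidean space, build a family $g_\varepsilon \in [g]$ whose conformal factor concentrates the volume in a shrinking Euclidean ball at $p$ so as to mimic almost all of the round sphere. Pulling back the first $\mathbb{S}^n$-eigenfunctions to this near-sphere region and using them as test functions in the min--max for $\bar\lambda_1(g_\varepsilon)$ yields $\liminf_{\varepsilon \to 0}\bar\lambda_1(g_\varepsilon) \geq \Lambda_1(\mathbb{S}^n,[g_{can}])$.

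To upgrade to strict inequality I would argue by contradiction, assuming equality $\Lambda_1(M,[g]) = \Lambda_1(\mathbb{S}^n,[g_{can}])$ and analyzing a maximizing sequence $g_k \in [g]$. If $g_k$ subconverges to a smooth maximizer $g^\star \in [g]$, then by the theory of extremal metrics the first eigenfunctions of $g^\star$ assemble into a harmonic map $\Phi \colon M \to \mathbb{S}^N$ whose image is a minimal submanifold, and the equality case with the round-sphere value forces $\Phi$ to be a conformal diffeomorphism onto $\mathbb{S}^n$, contradicting the hypothesis on $(M,[g])$. Hence no smooth maximizer can exist, and the sequence $g_k$ must degenerate.

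In the degenerate case the key tool is a concentration-compactness dichotomy for maximizing sequences of conformal factors, which shows that the eigenvalue energy quantizes into spherical ``bubbles'', each contributing exactly $\Lambda_1(\mathbb{S}^n,[g_{can}])$, plus a possibly trivial residual contribution from a limit metric on $M$. Under the equality assumption a single bubble must absorb all the energy and the residual is trivial. I would then construct a refined competitor by performing the bubble construction above at $p$ while leaving a small portion of $(M,g)$ away from $p$ to support an additional first-eigenfunction direction; since $M$ is not conformally $\mathbb{S}^n$, this extra direction can be chosen to produce a strictly positive min--max improvement, giving a metric $\tilde g \in [g]$ with $\bar\lambda_1(\tilde g) > \Lambda_1(\mathbb{S}^n,[g_{can}])$ --- the desired contradiction.

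The hardest step is the concentration-compactness quantization: the functional $\bar\lambda_1$ is non-coercive on the space of conformal factors, and one must simultaneously control the degenerating conformal factors and the rescaled eigenfunctions near each concentration point, together with a no-neck/energy-identity statement pinning each bubble's contribution at exactly $\Lambda_1(\mathbb{S}^n,[g_{can}])$. Constructing the strict-improvement competitor is also delicate, since one has to exploit the conformal non-triviality of $M$ quantitatively enough to extract an orthogonal test direction without destroying the near-optimal bubble at $p$.
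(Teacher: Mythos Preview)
The paper does not prove this statement at all: Theorem~\ref{roman} is quoted from the literature (Petrides' rigidity paper and its sequel) and is used as a black box, notably in Proposition~\ref{petrides_cor} and in the base case of the induction in Section~\ref{appendix}. There is therefore no ``paper's own proof'' to compare your proposal against.

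Your sketch is, in broad strokes, a reasonable outline of how the cited references actually proceed: the weak inequality $\Lambda_1(M,[g])\geqslant \Lambda_1(\mathbb{S}^n,[g_{can}])$ goes back to Friedlander--Nadirashvili and El~Soufi--Ilias via concentrated conformal factors, and Petrides' contribution is precisely the strict inequality, obtained by analyzing a maximizing sequence and ruling out both a genuine maximizer (via the Nadirashvili/El~Soufi--Ilias minimal-immersion characterization) and a purely bubbling degeneration. Your description of the ``refined competitor'' step is somewhat vague and not quite how Petrides argues; the actual proof exploits a careful capacity/test-function estimate on annuli surrounding the concentration point rather than an ad~hoc extra orthogonal direction, but the overall architecture you describe is in the right spirit. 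In any case, for the purposes of this paper you are free to simply invoke the theorem.
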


\begin{theorem}[\cite{MR2015867}]
For every Riemannian metric $g$ on a closed $n$-dimensional manifold $M$ one has

\begin{align}\label{bruno}
\Lambda_{k}(M, [g]) \geqslant \Lambda_k(\mathbb{S}^n, [g_{can}])
\end{align}
and
\begin{align}\label{colbois}
\Lambda_{k}(M, [g])^{\frac{n}{2}} \geqslant \Lambda_{k-1}(M, [g])^{\frac{n}{2}}+\Lambda_1(\mathbb{S}^n, [g_{can}]).
\end{align}
\end{theorem}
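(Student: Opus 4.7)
We prove both inequalities by a common \emph{conformal bubble attachment}: inside an arbitrarily small conformal ball at a chosen point $p\in M$, glue a metric conformally equivalent to a prescribed element of $[g_{can}]$, producing a new metric still in $[g]$. The construction goes through a conformal chart around $p$ in which $g=e^{2\omega}\delta_{ij}$; via inverse stereographic projection $g_{can}$ becomes $4(1+|x|^2)^{-2}\delta_{ij}$ on $\mathbb R^n$, and the dilation $x\mapsto x/\delta$ concentrates this ``bubble'' in an arbitrarily small neighborhood of the origin. More generally, any $h\in[g_{can}]$ can first be concentrated by a M\"obius transformation of $\mathbb S^n$ (which preserves each $\overline\lambda_j(\mathbb S^n,h)$) and then transplanted through the chart, giving a metric $g_\varepsilon\in[g]$ that coincides with the starting metric outside a tiny ball.

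For (\ref{bruno}): given $\varepsilon>0$, pick $h\in[g_{can}]$ with $\overline\lambda_k(\mathbb S^n,h)\geq \Lambda_k(\mathbb S^n,[g_{can}])-\varepsilon$, concentrate and transplant it as above, deforming $g$ on the complement within $[g]$ so that the exterior volume and the exterior Rayleigh quotient contributions of the first $k$ eigenfunctions of $h$ are negligible. Those eigenfunctions, extended by zero and re-orthogonalized against constants on $(M,g_\varepsilon)$, supply $k$ test functions whose Rayleigh quotients converge to the corresponding eigenvalues of $h$; by min--max, $\Lambda_k(M,[g])\geq \Lambda_k(\mathbb S^n,[g_{can}])-O(\varepsilon)$.

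For (\ref{colbois}): fix $\eta>0$, choose $g_0\in[g]$ of unit volume with $\lambda_{k-1}(g_0)\geq \Lambda_{k-1}(M,[g])-\eta$, and let $\phi_1,\dots,\phi_{k-1}$ be $g_0$-orthonormal eigenfunctions. Attach at some $p\in M$ a sphere bubble realizing a near-extremizer of $\Lambda_1(\mathbb S^n,[g_{can}])$, scaled so that its volume in the resulting $g_\varepsilon\in[g]$ is a parameter $V_b>0$, and denote by $\psi$ its first eigenfunction. Truncating each $\phi_i$ outside the bubble on scale $\to 0$ and projecting onto the orthogonal complement of constants in $L^2(g_\varepsilon)$ yields $\tilde\phi_i$ with Rayleigh quotients $\leq \lambda_{k-1}(g_0)+o(1)$, while $\psi$ extended by zero has Rayleigh quotient $V_b^{-2/n}\Lambda_1(\mathbb S^n,[g_{can}])+o(1)$. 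The essentially disjoint supports let min--max conclude
\[
\lambda_k(g_\varepsilon)\geq\min\!\Big\{\lambda_{k-1}(g_0),\, V_b^{-2/n}\Lambda_1(\mathbb S^n,[g_{can}])\Big\}-o(1).
\]
Since $\Vol(M,g_\varepsilon)=1+V_b+o(1)$, multiplying by $\Vol^{2/n}$, raising to the power $n/2$, and optimizing at the balanced $V_b=(\Lambda_1(\mathbb S^n,[g_{can}])/\lambda_{k-1}(g_0))^{n/2}$ gives
\[
\overline\lambda_k(M,g_\varepsilon)^{n/2}\geq \overline\lambda_{k-1}(M,g_0)^{n/2}+\Lambda_1(\mathbb S^n,[g_{can}])-o(1);
\]
sending $\eta\to 0$ and the bubble scale $\to 0$ yields (\ref{colbois}).

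The main obstacle is the truncation-plus-orthogonalization step: one must verify that cutting $\phi_i$ off on the bubble scale and projecting out constants on the new manifold preserves the Rayleigh quotient bound up to $o(1)$, uniformly in $V_b$ as the bubble scale shrinks. The smoothness of the $\phi_i$ and the shrinking volume and gradient contributions from the transition annulus control these errors. Once the quantitative bounds are in place, the volume arithmetic $(1+V_b)^{2/n}\min\{\lambda_{k-1}(g_0),V_b^{-2/n}\Lambda_1(\mathbb S^n,[g_{can}])\}$ reproduces exactly the additive form on the right-hand side of (\ref{colbois}).
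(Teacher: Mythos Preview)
First, note that the paper does not contain a proof of this theorem: it is quoted from \cite{MR2015867} (Colbois--El Soufi) as background. So there is no ``paper's own proof'' to compare against; what follows evaluates your proposal on its own merits.

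Your geometric picture---concentrate a conformal sphere bubble near a point of $M$ and balance volumes---is exactly the mechanism behind \cite{MR2015867}. However, the central analytic step is asserted with the wrong sign. From $k$ test functions $\tilde\phi_1,\dots,\tilde\phi_{k-1},\psi$ orthogonal to constants, the variational characterization gives only
\[
\lambda_k(g_\varepsilon)\;\leqslant\;\max_{\phi\in\operatorname{span}\{1,\tilde\phi_1,\dots,\tilde\phi_{k-1},\psi\}}R_{g_\varepsilon}[\phi]\;\leqslant\;\max\bigl\{\lambda_{k-1}(g_0),\,V_b^{-2/n}\Lambda_1(\mathbb S^n,[g_{can}])\bigr\}+o(1),
\]
an \emph{upper} bound on $\lambda_k(g_\varepsilon)$ with a $\max$ on the right---not the lower bound with a $\min$ that you write. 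Exhibiting a $(k+1)$-dimensional subspace never yields a lower bound on $\lambda_k$. The same defect appears in your argument for \eqref{bruno}: the transplanted sphere eigenfunctions give $\lambda_k(g_\varepsilon)\lesssim\lambda_k(\mathbb S^n,h)$, which says nothing about $\Lambda_k(M,[g])$ from below.

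What is actually required is the reverse estimate: one must show that as the bubble scale shrinks, $\lambda_k(g_\varepsilon)$ is bounded below by (essentially) the $k$-th element of the sorted union of the nonzero spectra of $(M,g_0)$ and of the bubble. This is the substantive part of \cite{MR2015867}; it is obtained either via the max--min form $\lambda_k=\max_{\dim W=k}\min_{\phi\perp W}R[\phi]$ (choosing $W$ to be the span of $1,\phi_1,\dots,\phi_{k-1}$ and controlling every competitor $\phi\perp W$), or via a decoupling lemma for eigenvalues under thin connected sums / concentrated conformal factors. Once that lower bound is in hand, your balancing $V_b=(\Lambda_1(\mathbb S^n,[g_{can}])/\lambda_{k-1}(g_0))^{n/2}$ and the ensuing volume arithmetic are correct and produce \eqref{colbois}. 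In short: right construction, but the key inequality is invoked in the wrong direction, and the genuine analytic work---the lower bound on $\lambda_k(g_\varepsilon)$---is missing.
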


\subsection{Main results}
In this paper we investigate the functional  
\begin{align*}
I_k(M)=\inf_{[g]} \Lambda_k(M,[g]),
\end{align*}
called \textit{the Friedlander-Nadirashvili invariant}. It is a differential invariant depending only on the smooth structure on $M$. 

Let us briefly describe the history of this functional. The invariant $I_1(M)$ was introduced in the paper \cite{MR1717641}, where Friedlander and Nadirashvili proved that for every $n$-dimensional closed manifold $M$ one has
\begin{align*}
I_1(M) \geqslant \overline{\lambda}_1(\mathbb{S}^n, g_{\can}).
\end{align*}
In particular, if $\Sigma$ is a closed surface then
\begin{align*}
I_1(\Sigma) \geqslant 8\pi.
\end{align*}

Inequalities (\ref{bruno}) and (\ref{colbois}) imply that

\begin{align}\label{el soufi}
I_{k}(M) \geqslant \Lambda_k(\mathbb{S}^n, [g_{can}])
\end{align}
and
\begin{align}
\label{IkIk-1}
I_{k}(M)^{\frac{n}{2}} \geqslant I_{k-1}(M)^{\frac{n}{2}}+\Lambda_1(\mathbb{S}^n, [g_{can}]).
\end{align}



We introduce the following notations. Let $\widetilde{\Sigma}_\gamma$ denote an orientable closed surface of genus $\gamma$ and $\Sigma_\gamma$ denote a non-orientable closed surface of genus $\gamma$. Here the genus of a non-orientable closed surface is defined to be the genus of its orientable double cover. Furthermore we set $I_k(\gamma)=I_k(\Sigma_\gamma)$ and $\widetilde{I_k}(\gamma)=I_k(\widetilde{\Sigma}_\gamma)$. In general, we use tilde for anything related to orientable surfaces and do not use it otherwise.

Let us recall known results. Since any two metrics on $\mathbb{S}^2$ or $\mathbb{RP}^2$ are conformally equivalent, 
one has $I_k(0) = \Lambda_k(\mathbb{RP}^2)$ and $\widetilde I_k(0) = \Lambda_k(\mathbb{S}^2)$. According to~\cite{karpukhin2017isoperimetric}, $\Lambda_k(\mathbb{S}^2)=8\pi k$. Similarly, it was proved in~\cite{karpukhin2019index} that $\Lambda_k(\mathbb{RP}^2)=4\pi(2k+1)$. For historical review in research of the invariants $\Lambda_k(\mathbb{S}^2)$ and $\Lambda_k(\mathbb{RP}^2)$ see the survey~\cite{MR4017613}.


In the paper~\cite{MR1717641} Nadirashvili and Friedlander suggested that $I_1(M)=8\pi$ for any closed surface $M$ other than the projective plane. This statement was confirmed in certain cases. In the paper \cite{MR2514484} Girouard proved that $I_1(\mathbb{KL})=I_1(\mathbb{T}^2)=I_1(\mathbb{S}^2)=8\pi$, where $\mathbb{KL}$ is the Klein bottle (see also \cite{MR1415764}). Petrides in the paper \cite{petrides} extended the ideas of Nadirashvili and Girouard and proved that if $M$ is a smooth compact \textit{orientable} surface then $I_1(M)=8\pi$ and the infimum is attained only on the sphere $\mathbb{S}^2$.

The main result of this paper is the following theorem.






\begin{theorem}
\label{disproof}
The following statements hold.
\begin{itemize}
\item[(i)] The Friedlander-Nadirashvili invariants of orientable surfaces satisfy $\widetilde I_k(\gamma)= \widetilde I_k(0)=8\pi k$ for any $\gamma\geqslant 0$. The infimum is attained iff $\gamma=0.$

\item[(ii)] The Friedlander-Nadirahsvili invariants of non-orientable surfaces of odd genus $\gamma\geqslant 1$ satisfy $I_k(\gamma)= \widetilde I_k(0)=8\pi k.$ The infimum is never attained.

\item[(iii)] The Friedlander-Nadirashvili invariants of non-orientable surfaces of even genus $\gamma\geqslant 2$ satisfy
\begin{equation}
\label{even_ineq}
I_k(\gamma) \leqslant I_k(\gamma-2)
\end{equation}
If inequality~\eqref{even_ineq} is strict, then there exists a conformal class $c$ such that $I_{k}(\gamma) = \Lambda_k(\Sigma_\gamma,c)$.
\end{itemize}
\end{theorem}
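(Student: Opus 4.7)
The proof splits naturally into establishing the lower bound $I_k(M) \geqslant 8\pi k$ (which is shared by parts (i), (ii) and the trivial direction of (iii)), the matching upper bound in (i) and (ii), the inequality \eqref{even_ineq} in (iii), and finally the attainment dichotomy in all three parts. The lower bound is immediate from~\eqref{el soufi} together with $\Lambda_k(\mathbb{S}^2,[g_{can}]) = 8\pi k$ from \cite{karpukhin2017isoperimetric}. The matching upper bounds as well as \eqref{even_ineq} all come from one technical device: a \emph{conformal degeneration / gluing lemma} stating that if a family of conformal classes $[g_n]$ on a closed surface $M$ degenerates to a nodal limit with irreducible components $(\Sigma^j,[h_j])_{j=1}^r$, then
\begin{equation*}
\limsup_{n\to\infty}\Lambda_k(M,[g_n]) \leqslant \max_{k_1+\cdots+k_r=k}\sum_{j=1}^r \Lambda_{k_j}(\Sigma^j,[h_j]), \qquad \Lambda_0 := 0,
\end{equation*}
together with a corresponding lower-semicontinuity version of the same statement for minimizing sequences.

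Granting this lemma, the upper bounds in (i) and (ii) follow by iterated gluing. Starting from a round $\mathbb{S}^2$, one attaches small handles (orientable case) or small pairs of cross-caps packaged as Klein bottles (non-orientable odd-genus case), choosing the attached pieces to carry arbitrarily small conformal mass. In the limit the only nontrivial contribution comes from the sphere, so $\Lambda_k \to 8\pi k$, yielding $\widetilde I_k(\gamma) \leqslant 8\pi k$ for all $\gamma\geqslant 0$ and $I_k(\gamma) \leqslant 8\pi k$ for odd $\gamma\geqslant 1$. The same construction, applied with $\Sigma_{\gamma-2}$ in place of $\mathbb{S}^2$ and a small sphere as the attached piece (which contributes $\Lambda_0 = 0$ when $k_2 = 0$ in the decomposition bound), gives \eqref{even_ineq}.

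For non-attainment in (i) and (ii), I argue by induction on $k$. The base case $k = 1$ for orientable surfaces is Petrides' rigidity (Theorem~\ref{roman}), which gives $\Lambda_1(M,[g]) > 8\pi$ in every conformal class on $M\neq \mathbb{S}^2$; the non-orientable odd-genus base case then follows from the same rigidity applied on the orientable double cover together with the gluing description. For the inductive step, suppose $I_k(M) = 8\pi k$ were attained at $[g_0]$; combining~\eqref{colbois} in $[g_0]$ with $\Lambda_{k-1}(M,[g_0]) \geqslant I_{k-1}(M) = 8\pi(k-1)$ forces $\Lambda_{k-1}(M,[g_0]) = 8\pi(k-1)$, contradicting the inductive hypothesis. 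For the attainment claim in (iii) under strict inequality, take a minimizing sequence $[g_n]$ for $I_k(\Sigma_\gamma)$. If $\{[g_n]\}$ stays in a compact part of moduli space, a subsequence converges to $[g_\infty]$ achieving the infimum. Otherwise the lower-semicontinuity form of the degeneration lemma gives $I_k(\gamma) \geqslant \max \sum_j I_{k_j}(\Sigma^j)$ over admissible decompositions. Since $\Sigma_\gamma$ has an odd number $\gamma+1$ of cross-caps, at least one limit component $\Sigma^{j_0}$ must be non-orientable with odd cross-cap count, hence is either $\mathbb{RP}^2$ (contributing $\Lambda_{k_{j_0}}(\mathbb{RP}^2) = 4\pi(2k_{j_0}+1) > 8\pi k_{j_0}$) or some $\Sigma_{2\ell}$ with $0 < 2\ell \leqslant \gamma-2$. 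Enumerating cases against the bounds $I_{k_j}(\Sigma^j) \geqslant 8\pi k_j$ for the remaining components and $I_k(\gamma) \leqslant I_k(\gamma-2)$ obtained above, every nontrivial degeneration yields $I_k(\gamma) \geqslant I_k(\gamma-2)$, contradicting the strictness of~\eqref{even_ineq}.

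The main obstacle is proving the conformal degeneration lemma in both its upper- and lower-semicontinuity forms, adapted to the non-orientable setting. Technically this requires a bubbling and decomposition analysis in the spirit of Petrides and Nadirashvili--Sire, but performed equivariantly under the deck involution of the orientable double cover so as to keep track of which limit components are orientable and which retain cross-caps. Once this lemma is in place, the combinatorial case analysis of cross-cap partitions and the inductive arguments above close the proof.
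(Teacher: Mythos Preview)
Your overall strategy coincides with the paper's: both hinge on a degeneration formula for $\Lambda_k(M,c_n)$ (the paper's Theorem~\ref{conf&conv}), explicit degenerating sequences (the paper's Lemma~\ref{metric sequences}), and a parity/topology obstruction for even non-orientable genus (the paper's Corollary~\ref{topology_even_genus}). Your non-attainment argument via induction on $k$ is a valid alternative to the paper's direct use of Proposition~\ref{petrides_cor}, and your constructions for (i), (ii), and the inequality in (iii) are essentially those of Lemma~\ref{metric sequences}.

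However, your stated degeneration lemma is not the correct one, and this creates a genuine gap in the attainment claim of (iii). You write
\[
\limsup_{n\to\infty}\Lambda_k(M,[g_n]) \leqslant \max_{k_1+\cdots+k_r=k}\sum_{j}\Lambda_{k_j}(\Sigma^j,[h_j]),
\]
tracking only the irreducible components $\Sigma^j$ of the nodal limit. The actual limit (Theorem~\ref{conf&conv}) contains, in addition, a term $\Lambda_{r_i}(\mathbb{RP}^2)$ for \emph{each collapsing $1$-sided geodesic}; these $\mathbb{RP}^2$'s are bubbles living in the pinched M\"obius collars and are \emph{not} components of $\widehat{M_\infty}$ in the Deligne--Mumford sense. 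In particular your $\limsup$ bound is false when $1$-sided geodesics pinch.

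This matters precisely where you invoke parity. Consider $\Sigma_\gamma$ with $\gamma$ even and let a single $1$-sided geodesic collapse. The limit surface $\widehat{M_\infty}$ then has genus $\gamma-1$ (odd), so \emph{every} component of the nodal limit can be orientable or non-orientable of odd genus; your claim that ``at least one limit component has odd cross-cap count'' fails. Concretely, $\Sigma_2$ can degenerate via a $1$-sided pinch to a single Klein bottle, and your lower bound then gives only $\lim\Lambda_k\geqslant I_k(\mathbb{KL})=8\pi k$, which does not exceed $I_k(\gamma-2)=I_k(0)=4\pi(2k+1)$. It is exactly the extra $\Lambda_{r}(\mathbb{RP}^2)$ bubble term in Theorem~\ref{conf&conv} that rescues the argument: choosing $r=k$ gives $\lim\Lambda_k\geqslant\Lambda_k(\mathbb{RP}^2)=I_k(0)\geqslant I_k(\gamma-2)$. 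So the dichotomy you need is the paper's Corollary~\ref{topology_even_genus}: either a $1$-sided geodesic collapses (producing an $\mathbb{RP}^2$ bubble), or some genuine component has even genus---and both alternatives must be handled. Once your degeneration lemma is corrected to include these bubble terms, the rest of your outline goes through as in the paper.
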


\begin{corollary} \label{main corollary}
If $\gamma\geqslant 2$ is even, then one has
$$
8\pi k=\widetilde I_k(0) < I_k(\gamma) \leqslant I_k(0)=4\pi(2k+1).
$$
In particular, for $k=1$ one has 
$$
8\pi < I_1(\gamma) \leqslant 12\pi,
$$
for all even $\gamma$.
\end{corollary}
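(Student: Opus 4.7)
The plan is to handle the upper and lower bounds separately, both building on Theorem~\ref{disproof}(iii). For the upper bound $I_k(\gamma) \leqslant 4\pi(2k+1)$, I would simply iterate the non-strict inequality $I_k(\gamma^\prime) \leqslant I_k(\gamma^\prime - 2)$ supplied by (iii), starting at $\gamma^\prime = \gamma$ and descending in steps of $2$ down to $\gamma^\prime = 2$, thereby obtaining $I_k(\gamma) \leqslant I_k(0)$. The identification $I_k(0) = \Lambda_k(\mathbb{RP}^2) = 4\pi(2k+1)$ has already been noted in the text above, which closes this half.

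For the strict lower bound $I_k(\gamma) > 8\pi k$, I would proceed by contradiction. Suppose $I_k(\gamma) = 8\pi k$. Inequality~\eqref{el soufi} forces $I_k(\gamma^\prime) \geqslant 8\pi k$ at every even $\gamma^\prime$, while $I_k(0) = 8\pi k + 4\pi$ strictly exceeds $8\pi k$; hence at least one step in the descending chain $I_k(\gamma) \leqslant I_k(\gamma - 2) \leqslant \dots \leqslant I_k(0)$ must be strict. Letting $\gamma^*$ be the largest index in $\{2,4,\dots,\gamma\}$ for which $I_k(\gamma^*) < I_k(\gamma^* - 2)$, all inequalities strictly above $\gamma^*$ in the chain are forced to be equalities, so $I_k(\gamma^*) = I_k(\gamma) = 8\pi k$. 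The second sentence of Theorem~\ref{disproof}(iii) then supplies a conformal class $c$ on $\Sigma_{\gamma^*}$ with $\Lambda_k(\Sigma_{\gamma^*}, c) = 8\pi k$.

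The main obstacle is to rule out such a maximizing conformal class on $\Sigma_{\gamma^*}$, i.e.\ to show that no non-orientable surface of positive even genus admits a conformal class where $\Lambda_k$ coincides with $\Lambda_k(\mathbb{S}^2)$. For $k=1$, Theorem~\ref{roman} applies immediately: since $\Sigma_{\gamma^*}$ with $\gamma^* \geqslant 2$ even is not conformally diffeomorphic to $(\mathbb{S}^2, g_{can})$, one has $\Lambda_1(\Sigma_{\gamma^*}, c) > 8\pi$, contradicting the equality. For general $k$, I would bootstrap from the $k=1$ case via the gap estimate~\eqref{colbois}: iterating $\Lambda_j(\Sigma_{\gamma^*}, c) \geqslant \Lambda_{j-1}(\Sigma_{\gamma^*}, c) + 8\pi$ from $j = k$ down to $j = 1$ gives $\Lambda_k(\Sigma_{\gamma^*}, c) \geqslant \Lambda_1(\Sigma_{\gamma^*}, c) + 8\pi(k-1) > 8\pi + 8\pi(k-1) = 8\pi k$, again a contradiction. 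Substituting $k = 1$ into the two bounds then recovers the numerical inequalities $8\pi < I_1(\gamma) \leqslant 12\pi$.
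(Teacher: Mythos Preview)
Your proposal is correct and follows essentially the same approach as the paper. The paper packages your bootstrapping argument (Theorem~\ref{roman} combined with inequality~\eqref{colbois} to get $\Lambda_k(M,[g]) \geqslant \Lambda_1(M,[g]) + 8\pi(k-1) > 8\pi k$) into a separate Proposition~\ref{petrides_cor}, but the contradiction argument via the first strict step in the descending chain and the attainment clause of Theorem~\ref{disproof}(iii) is identical.
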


Therefore, Corollary~\ref{main corollary} shows that the statement \textquotedblleft $I_1(M)=8\pi$ unless $M$ is a projective plane\textquotedblright \, suggested by Friedlander and Nadirashvili in~\cite{MR1717641} does not hold for non-orientable surfaces of even genus. 


The main idea in the proof of Theorem~\ref{disproof} is to investigate the behaviour of the quantity $\Lambda_k(M,c_n)$ when the sequence of conformal classes $\{c_n\}$ escapes to infinity in the moduli space of conformal classes on $M$. The precise expression for the limit makes use of Deligne-Mumford compactification. It is stated in Theorem~\ref{conf&conv} and is proved in Section~\ref{appendix}.

As a byproduct of our approach we obtain a result on conformal Neumann eigenvalues that could be of independent interest.
Consider a smooth domain $\Omega$ in $M$. Then we define the following functional

\begin{align*}
\Lambda^{N}_k(\Omega,[g|_{\Omega}]):= \sup_{\tilde{g} \in [g|_{\Omega}]} \overline{\lambda}^{N}_k(\Omega, \tilde{g}),
\end{align*}
where $\overline{\lambda}^{N}_k(\Omega, \tilde{g})=\lambda^N_k(\Omega, \tilde{g})\Vol(\Omega, \tilde{g})^\frac{2}{n}$ and $\lambda^N_k(\Omega, \tilde{g})$ is the $k$-th Neumann eigenvalue of the domain $\Omega$ in the metric $\tilde{g}$. 
In the sequel we often omit the restriction symbol and simply write $\Lambda^{N}_k(\Omega,[g])$. 

\begin{proposition}
\label{main lemma}
Let $(M, g)$ be a compact Riemannian manifold and $\Omega \subset M$ be a smooth domain. Then the following inequality holds,
\begin{align*}
\Lambda_k(M,[g]) \geqslant \Lambda^N_k(\Omega,[g]).
\end{align*}  
\end{proposition}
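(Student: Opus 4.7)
The plan is to fix an arbitrary $\tilde g\in[g|_\Omega]$ and exhibit a family $\{g_\epsilon\}\subset[g]$ of conformal metrics on $M$ with $\liminf_{\epsilon\to 0}\ol\lambda_k(M,g_\epsilon)\geq\ol\lambda_k^N(\Omega,\tilde g)$; the proposition will then follow by taking the supremum over $\tilde g$. Writing $\tilde g=e^{2\omega}g|_\Omega$, I would extend $\omega$ to $\omega_\epsilon\in C^\infty(M)$ satisfying $\omega_\epsilon|_\Omega=\omega$ and $\omega_\epsilon\equiv\log\epsilon$ on $M\setminus U_\epsilon$, where $U_\epsilon$ is an $\epsilon$-tubular neighborhood of $\partial\Omega$ and $\omega_\epsilon$ is interpolated smoothly on $U_\epsilon\setminus\ol\Omega$; set $g_\epsilon:=e^{2\omega_\epsilon}g$. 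A direct computation gives $\Vol(M,g_\epsilon)\to\Vol(\Omega,\tilde g)$.

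The key step is the eigenvalue estimate $\liminf\lambda_k(M,g_\epsilon)\geq\lambda_k^N(\Omega,\tilde g)$. Let $V_\epsilon=\myspan\{\phi_0^\epsilon,\ldots,\phi_k^\epsilon\}$ be the span of the first $k+1$ eigenfunctions of $(M,g_\epsilon)$, orthonormal in $L^2(M,g_\epsilon)$, and consider the restriction map $\pi\colon V_\epsilon\to H^1(\Omega)$. If $\pi$ is injective, then $\pi(V_\epsilon)$ is a $(k+1)$-dimensional subspace of $H^1(\Omega)$, so Neumann min-max on $(\Omega,\tilde g)$ produces $u^*\in V_\epsilon$ with $\int_\Omega|\nabla u^*|_{\tilde g}^2\,dv_{\tilde g}\geq\lambda_k^N(\Omega,\tilde g)\int_\Omega(u^*)^2\,dv_{\tilde g}$. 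Since $g_\epsilon|_\Omega=\tilde g$ and the contribution of $M\setminus\Omega$ to the energy is nonnegative,
\begin{align*}
R_{g_\epsilon}(u^*)\geq\frac{\int_\Omega|\nabla u^*|_{\tilde g}^2\,dv_{\tilde g}}{\int_M(u^*)^2\,dv_{g_\epsilon}}\geq\lambda_k^N(\Omega,\tilde g)\cdot\frac{\int_\Omega(u^*)^2\,dv_{\tilde g}}{\int_M(u^*)^2\,dv_{g_\epsilon}}.
\end{align*}
Combined with $R_{g_\epsilon}(u^*)\leq\lambda_k(M,g_\epsilon)$, this reduces the problem to showing that the $L^2(M,g_\epsilon)$-mass of $u^*$ concentrates on $\Omega$. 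If instead $\pi$ is not injective, there is a nonzero $u\in V_\epsilon$ with $u|_\Omega=0$; then $u\in H^1_0(M\setminus\ol\Omega)$ and $R_{g_\epsilon}(u)\geq\lambda_1^D(M\setminus\ol\Omega,g_\epsilon)\to\infty$, so the lower bound on $\lambda_k$ is automatic in that case.

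The main obstacle is thus the concentration claim $\int_{M\setminus\Omega}(\phi_i^\epsilon)^2\,dv_{g_\epsilon}\to 0$ for $i\leq k$. Combining $\int_{M\setminus\Omega}|\nabla\phi_i^\epsilon|_{g_\epsilon}^2\,dv_{g_\epsilon}\leq\lambda_i^\epsilon$ with a Poincar\'e inequality on $(M\setminus\ol\Omega,g_\epsilon)$ reduces the matter to $\lambda_1^N(M\setminus\ol\Omega,g_\epsilon)\to\infty$ together with $\Vol(M\setminus\Omega,g_\epsilon)\to 0$; the latter is immediate from the construction. When $n\geq 3$, the former follows from the additional weight $e^{(n-2)\omega_\epsilon}$ that suppresses the Dirichlet energy in the shrunk region. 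In the conformally-invariant case $n=2$, I would decompose $M\setminus\ol\Omega$ into the bulk region $M\setminus(U_\epsilon\cup\ol\Omega)$---uniformly scaled by $\epsilon^2$, giving $\lambda_1^N\sim\epsilon^{-2}$---and the transition collar $U_\epsilon\setminus\ol\Omega$, on which the conformal factor is bounded but the $g$-area is $O(\epsilon)$, and then combine a two-piece Poincar\'e estimate to obtain the required decay. Once concentration is established, combining with $\Vol(M,g_\epsilon)\to\Vol(\Omega,\tilde g)$ yields $\liminf\ol\lambda_k(M,g_\epsilon)\geq\ol\lambda_k^N(\Omega,\tilde g)$, and the proposition follows.
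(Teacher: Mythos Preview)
Your overall strategy---crush the metric on $M\setminus\Omega$ with a degenerating conformal factor and show $\liminf\lambda_k(M,g_\epsilon)\geq\lambda_k^N(\Omega,\tilde g)$---matches the paper's. The paper simplifies matters by working with the \emph{discontinuous} factor $\rho_\delta|_\Omega\equiv 1$, $\rho_\delta|_{\Omega^c}\equiv\delta$ and invoking a separate lemma (your smooth collar is fine too, but adds bookkeeping). The substantive gap is in your concentration step.

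You claim that the concentration $\int_{M\setminus\Omega}(\phi_i^\epsilon)^2\,dv_{g_\epsilon}\to 0$ ``reduces to $\lambda_1^N(M\setminus\ol\Omega,g_\epsilon)\to\infty$ together with $\Vol(M\setminus\Omega,g_\epsilon)\to 0$''. This is not enough: the Neumann--Poincar\'e inequality only controls the oscillation of $\phi_i^\epsilon$ on $M\setminus\Omega$, not its mean. Nothing you have written rules out $\phi_i^\epsilon\approx c_\epsilon$ on $M\setminus\Omega$ with $c_\epsilon^2\,\Vol(M\setminus\Omega,g_\epsilon)$ bounded away from zero (so $c_\epsilon\to\infty$); indeed, on a shrinking disk of radius $\epsilon$ one has $\lambda_1^N\sim\epsilon^{-2}\to\infty$ and $\Vol\sim\epsilon^2\to 0$, yet the constant function $\epsilon^{-1}$ carries unit $L^2$-mass. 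What actually excludes this scenario is that a large boundary trace on $\partial\Omega$ forces either large energy on $\Omega^c$ (for the part vanishing on $\partial\Omega$) or large $H^1$-norm on $\Omega$ (for the harmonic extension), both of which contradict the eigenvalue bound. The paper implements this via the orthogonal splitting $H^1(M)=\mathcal H_1\oplus\mathcal H_2$ with $\mathcal H_1=\{\varphi:\Delta\varphi=0$ on $\Omega^c\}$ and $\mathcal H_2=H^1_0(\Omega^c)$: the $\mathcal H_2$-piece is killed by the \emph{Dirichlet} Poincar\'e constant on $\Omega^c$, while the $\mathcal H_1$-piece is controlled by the harmonic-extension estimate $\|\varphi_1\|_{L^2(\Omega^c,g)}\leq C\|\varphi\|_{H^{1/2}(\partial\Omega)}\leq C\|\varphi\|_{H^1(\Omega)}$, which in turn is bounded via the Rayleigh quotient. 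Your ``two-piece Poincar\'e'' sketch does not supply this trace/extension input, and without it the argument does not close.
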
 

\begin{remark}
Similar results for analogs of the Friedlander-Nadirashvili invariants for the \textit{Steklov problem} have been recently obtained by the second named author in the paper~\cite{medved}.
\end{remark}

\subsection{Discussion}
One of the questions that Corollary~\ref{main corollary} leaves unanswered is the exact value of $I_k(\gamma)$ for even $\gamma$. By an analogy with Theorem~\ref{disproof}, (i) and (ii), the following conjecture seems natural.

\begin{conjecture}
\label{evengamma_conj}
 For all even $\gamma$ one has
$$
I_k(\gamma) = I_k(0).
$$
The infimum is attained iff $\gamma=0$.
\end{conjecture}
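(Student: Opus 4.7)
The plan is to establish the matching lower bound $I_k(\gamma)\geqslant 4\pi(2k+1)$ for even $\gamma\geqslant 2$, since the upper bound $I_k(\gamma)\leqslant 4\pi(2k+1)$ follows by iterating Theorem~\ref{disproof}(iii) from the base $\gamma=0$. I would proceed by induction on even $\gamma$, aiming to show that the inequality~\eqref{even_ineq} is in fact an equality, so that by the inductive hypothesis one has $I_k(\gamma)=I_k(\gamma-2)=4\pi(2k+1)$.

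The proposed approach combines the attainment dichotomy of Theorem~\ref{disproof}(iii) with a degeneration analysis for a minimizing sequence of conformal classes $c_n$ on $\Sigma_\gamma$ with $\Lambda_k(\Sigma_\gamma,c_n)\to I_k(\gamma)$. If $c_n$ stays in a compact subset of the moduli space, the infimum is attained (by Theorem~\ref{disproof}(iii) if~\eqref{even_ineq} is strict, otherwise by induction), and one would analyze the extremal metric via the associated branched minimal immersion into a sphere, exploiting the parity obstruction coming from the orientation-reversing involution on the oriented double cover $\widetilde{\Sigma}_\gamma\to\Sigma_\gamma$. If instead $c_n$ escapes to infinity in moduli, one would apply Theorem~\ref{conf&conv} and examine the Deligne-Mumford limit: the nodal surface has components $S_1,\ldots,S_m$, and the crucial topological observation is that $\chi(\Sigma_\gamma)=1-\gamma$ is odd for even $\gamma$, so any such nodal degeneration must contain an odd number of projective-plane components. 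Each such component contributes at least $\Lambda_{k_i}(\mathbb{RP}^2)=4\pi(2k_i+1)$ for some $k_i\leqslant k$ to the limit of $\Lambda_k$, and the optimal eigenvalue distribution across components should force
\begin{equation*}
\lim_n \Lambda_k(\Sigma_\gamma,c_n) \;\geqslant\; 4\pi(2k+1).
\end{equation*}

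A complementary route uses Proposition~\ref{main lemma} with $\Omega$ a tubular neighborhood of a one-sided simple closed curve in $\Sigma_\gamma$ (which exists since $\Sigma_\gamma$ is non-orientable), giving $\Lambda_k(\Sigma_\gamma,c)\geqslant \Lambda^N_k(\Omega,c)$ for every $c$, and reducing the conjecture to the sharp M\"obius-band Neumann inequality $\inf_{c_\Omega}\Lambda^N_k(\Omega,c_\Omega)\geqslant 4\pi(2k+1)$. One could try to establish this by passing to the oriented double cover of $\Omega$ (an annulus with a free glide-reflection involution), decomposing the Neumann spectrum into invariant and anti-invariant parts, and comparing with the closed spectrum on $\mathbb{RP}^2=\mathbb{S}^2/\sigma$ via a capping/doubling argument.

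The main obstacle, common to both routes, is the fine eigenvalue bookkeeping in the Deligne-Mumford limit. The $k$-th normalized eigenvalue of a nodal surface is a combinatorial rearrangement of the spectra of the components weighted by the partition of the total volume, and it is not immediate that the parity-forced $\mathbb{RP}^2$ component actually carries the relevant low-index eigenvalues rather than only high-index ones that do not affect $\lambda_k$. Within a fixed conformal class on $\Omega$, the standard bubbling constructions only produce orientable sphere bubbles (hence only the weaker bound $8\pi k$), so the needed $4\pi$ improvement must come from a genuine degeneration of the conformal modulus in which the boundary collapses to a cross-cap and the limit object is a closed $\mathbb{RP}^2$. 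Developing the corresponding Deligne-Mumford compactification for conformal classes on compact surfaces with boundary, together with the continuity statement for $\overline{\lambda}_k^N$ across such a degeneration (an analogue of Theorem~\ref{conf&conv}), appears to be the principal technical barrier, and is likely why the statement is posed only as a conjecture in the paper.
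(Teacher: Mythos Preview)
The statement is Conjecture~\ref{evengamma_conj}: the paper does not prove it and explicitly leaves it open (see also Conjecture~\ref{bordism_conj}). There is therefore no proof in the paper to compare against; what you have written is an outline of possible strategies, and you correctly identify the principal obstacle at the end.

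A couple of points on the details. In the degeneration route, the claim that the limit ``must contain an odd number of projective-plane components'' is not quite right: by Corollary~\ref{topology_even_genus} (or directly from Lemma~\ref{topology_eq}(ii)) what is odd is $s + \#\{j:\gamma_j\text{ even}\}$, the number of collapsed $1$-sided geodesics plus the number of non-orientable components of even genus in the limit. The limit may contain no $\mathbb{RP}^2$ at all if it contains some $\Sigma_{\gamma'}$ with $\gamma'$ even and $0<\gamma'<\gamma$; this is precisely why the paper's argument for~\eqref{even_ineq} is inductive. More fundamentally, even when an $\mathbb{RP}^2$ term is present in the maximum in Theorem~\ref{conf&conv}, the index $r_i$ assigned to it can be $0$, in which case $\Lambda_0(\mathbb{RP}^2)=0$ contributes nothing; this is exactly the ``eigenvalue bookkeeping'' problem you flag, and nothing in the paper resolves it. The strict lower bound the paper does obtain, $I_k(\gamma)>8\pi k$ in Corollary~\ref{main corollary}, already uses the full strength of these tools.

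In the M\"obius-band route, the target inequality $\inf_{c_\Omega}\Lambda^N_k(\Omega,c_\Omega)\geqslant 4\pi(2k+1)$ is at least as hard as the conjecture itself: conformally, a M\"obius band can degenerate either toward a disc (core circle collapsing, yielding only the $8\pi k$ bound) or toward $\mathbb{RP}^2$ (boundary collapsing), and showing that the latter governs the infimum would require precisely the boundary-collapse continuity result you identify as missing. Both routes thus reduce the conjecture to statements of comparable difficulty, consistent with its open status.
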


Another natural question is: why do the quantities $I_k(\gamma)$ take different values for odd and even $\gamma$? Careful analysis of the proof suggests that the answer lies in the theory of {\em cobordisms}. We recall that two closed manifolds $M$ and $M'$ of the same dimension are called {\em cobordant} if there exists a manifold with boundary $W$ such that the boundary $\partial W$ is the disjoint union $M\sqcup M'$. Similarly, $M$ is cobordant to $0$ or null cobordant if there exists $W$ such that $\partial W=M$. One of the basic facts of cobordism theory is that two manifolds are cobordant iff they can be obtained from one another by a sequence of surgeries, see e.g.~\cite{milnor}. In dimension $2$ it implies that attaching a handle does not change the cobordism class. This makes the cobordism theory for surfaces  rather straightforward. Indeed, since $\mathbb{S}^2$ and $\mathbb{KL}$ are obviously cobordant to $0$, one concludes that all orientable surfaces and all non-orientable surfaces of odd genus are cobordant to $0$. By the same token, all non-orientable surfaces of even genus are cobordant to $\mathbb{RP}^2$. The fact that $\mathbb{RP}^2$ is not cobordant to $0$ can be shown using Stiefel-Whitney characteristic classes, see e.g.~\cite{MilnorStasheff}.

Assuming Conjecture~\ref{evengamma_conj}, the quantity $I_k$ is a {\em cobordism invariant} in dimension $2$.
Inequality~\eqref{even_ineq} can be interpreted as monotinicity of $I_k$ with respect to addition of a handle. The monotonicity then can be shown by choosing a degenerate sequence of conformal classes such that the handle collapses in the limit. It turns out that for such sequence the functional $\Lambda_k(M,c)$ is continuous, see Remark~\ref{continuity_remark}. We believe that the same phenomenon occurs in higher dimensions and propose the following extension of Conjecture~\ref{evengamma_conj}.

\begin{conjecture}
\label{bordism_conj}
The quantities $I_k$ are cobordism invariants, i.e. if $M$ is cobordant to $M'$ then $I_k(M)=I_k(M')$. In particular, if $M$ is cobordant to $0$ then $I_k(M)=I_k(\mathbb{S}^{\dim M})=\Lambda_k(\mathbb{S}^{\dim M},[g_{can}])$.
\end{conjecture}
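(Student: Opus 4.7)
The plan is to reduce the conjecture to a statement about surgeries, mirroring the two-dimensional handle-collapsing argument hinted at in Remark~\ref{continuity_remark}. Recall that two closed manifolds are cobordant if and only if they are related by a sequence of surgeries. Hence it suffices to show that $I_k$ is invariant under surgery: if $M'$ is obtained from $M$ by a $p$-surgery (excising a tubular neighborhood of $S^{p-1}\subset M$ of the form $S^{p-1}\times D^{n-p+1}$ and gluing in $D^p\times S^{n-p}$), then $I_k(M)=I_k(M')$. Since surgery is a symmetric relation on closed manifolds, proving the inequality $I_k(M')\leqslant I_k(M)$ suffices.

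The first step, establishing $I_k(M')\leqslant I_k(M)$, is the analog of inequality~\eqref{even_ineq} in higher dimensions. Starting from an arbitrary conformal class $[g]$ on $M$, I would construct a one-parameter family of conformal classes $[g_t]$ on $M'$ that degenerates as $t\to 0$ by collapsing the attached $D^p\times S^{n-p}$ region so that in the limit one recovers $(M,g)$ with a marked point. The hope is to prove
\[
\limsup_{t\to 0}\Lambda_k(M',[g_t]) \leqslant \Lambda_k(M,[g]),
\]
which would yield $I_k(M')\leqslant I_k(M)$ upon taking the infimum over $[g]$. For the opposite inequality, one applies Proposition~\ref{main lemma} with $\Omega\subset M'$ chosen as the complement of a small neighborhood of the newly attached cell; since $\Omega$ is diffeomorphic to an open subset of $M$, one relates $\Lambda^N_k(\Omega,[g])$ to conformal eigenvalues on $M$, at least for suitably chosen conformal classes.

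Once surgery invariance is in hand, the second assertion of the conjecture — that null-cobordant $M$ satisfy $I_k(M)=\Lambda_k(\mathbb{S}^n,[g_{can}])$ — follows from~\eqref{el soufi} combined with the fact that a null-cobordant manifold can be reduced to $\mathbb{S}^n$ by a finite sequence of surgeries, each of which does not increase $I_k$.

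The principal obstacle is the first step: controlling conformal spectra under a degenerating family of conformal classes in dimension $n\geqslant 3$. In dimension $2$ this is handled by the Deligne-Mumford compactification of the moduli space and the precise limit formula stated in Theorem~\ref{conf&conv}, both genuinely two-dimensional tools. In higher dimensions the space of conformal classes is no longer finite-dimensional and there is no clean compactification, so one must instead work intrinsically with the degenerating metrics, carefully choosing test functions supported away from the collapsing region and using the conformal upper bound from Theorem~\ref{Kor}(iii) to rule out concentration of eigenvalues inside the handle. A secondary difficulty is that, unlike in dimension $2$, the exact value of $\Lambda_k(\mathbb{S}^n,[g_{can}])$ is not known for $n\geqslant 3$ and $k\geqslant 2$, which means the conjectural equality $I_k(M)=\Lambda_k(\mathbb{S}^n,[g_{can}])$ must be proved without any explicit formula as a sanity check, relying entirely on the structural surgery argument.
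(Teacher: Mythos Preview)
The statement you are attempting to prove is labelled \textbf{Conjecture} in the paper, and the paper contains no proof of it. Immediately after stating Conjectures~\ref{evengamma_conj} and~\ref{bordism_conj}, the authors write: ``We plan to tackle Conjectures~\ref{evengamma_conj},~\ref{bordism_conj} in the subsequent papers.'' So there is nothing to compare your proposal against; the paper offers only the heuristic motivation in the paragraph preceding the conjecture (surgery invariance of cobordism classes plus the two-dimensional handle-collapsing picture).

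Your proposal is a reasonable outline of the strategy the authors themselves gesture at, and you correctly identify where it breaks down. Two points are worth sharpening. First, your claimed inequality $\limsup_{t\to 0}\Lambda_k(M',[g_t])\leqslant\Lambda_k(M,[g])$ is not obviously the right target: in dimension $2$ the analogous limit (Theorem~\ref{conf&conv}) picks up extra terms from collapsing pieces, and continuity only holds because the $\Lambda_{\widetilde r_i}(\mathbb{S}^2)$ terms can be absorbed via~\eqref{colbois}. In higher dimensions the collapsed handle might contribute a term like $\Lambda_r(\mathbb{S}^p\times\mathbb{S}^{n-p},[g_{\mathrm{prod}}])$ or similar, and there is no a priori reason this is absorbable. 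Second, your ``opposite inequality'' paragraph is circular as written: Proposition~\ref{main lemma} gives $\Lambda_k(M',[g'])\geqslant\Lambda^N_k(\Omega,[g'])$, but $\Omega$ being diffeomorphic to an open subset of $M$ does not by itself yield a lower bound in terms of $\Lambda_k(M,[g])$; you would need the reverse domain monotonicity, which is false in general. The symmetry of surgery already handles the reverse direction, so this paragraph should simply be deleted.

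In short: this is an open problem, your sketch matches the authors' intended approach, and the genuine gap you name (no Deligne--Mumford-type compactification in dimension $\geqslant 3$) is exactly why it remains a conjecture.
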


We remark that the cobordism theory has been used by Jammes in the paper~\cite{Jammes2008} to study upper bounds on $I_1$. 
We plan to tackle Conjectures~\ref{evengamma_conj},~\ref{bordism_conj} in the subsequent papers.


\subsection*{Notation} Let us remind the reader that $\widetilde{\Sigma}_\gamma$ denotes an orientable closed surface of genus $\gamma$ and $\Sigma_\gamma$ denotes a non-orientable closed surface of genus $\gamma$,
$I_k(\gamma)=I_k(\Sigma_\gamma)$ and $\widetilde{I_k}(\gamma)=I_k(\widetilde{\Sigma}_\gamma)$. In general, we use tilde to denote anything related to orientable objects. For example, $\pi\colon\widetilde\Sigma_\gamma\to\Sigma_\gamma$ denotes an orientable double cover. Moreover, the notation $\Sigma$ is usually used to denote a non-orientable surface and $\widetilde\Sigma$ is used to denote an orientable surface. If we do not want to specify orientablity of the surface, we denote it by $M$.  

\subsection*{Plan of the paper.} The paper is organized in the following way. 
In Section~\ref{geometry} we provide the geometric background, including hyperbolic surfaces and the convergence on the space of hyperbolic structures on a given surface. There we state the main technical result of the paper -- Theorem~\ref{conf&conv}. In Section~\ref{main theorem proof} we deduce Theorem~\ref{disproof} from Theorem~\ref{conf&conv} and prove Corollary~\ref{main corollary}. Sections~\ref{analysis} and~\ref{appendix} are devoted to proving Theorem~\ref{conf&conv}. In Section~\ref{analysis} we recall necessary facts about Neumann eigenvalues and, finally, in Section~\ref{appendix} we complete the proof.
\subsection*{Acknowledgements.} The authors are grateful to Iosif Polterovich for fruitful discussions
and for his remarks on the initial draft of the manuscript.
 The authors would like to thank Alexandre Girouard for outlining the proof of Proposition \ref{N-cont} and Bruno Colbois for valuable remarks. The authors are thankful to the reviewer for useful remarks and suggestions. During the preparation of this manuscript the first author was supported by Schulich Fellowship. This research is a part of the second author's PhD thesis at the Universit\'e de Montr\'eal under the supervision of Iosif Polterovich.

\medskip

\section{Moduli space of conformal classes}
\label{geometry}
In this section we recall necessary background on the geometry of moduli space of conformal classes on a fixed surface $M$. Even though the contents of this section are mostly classical, we felt inclined to include it in the paper due to the fact that the case of non-orientable surfaces is less known. In our exposition we follow the books \cite{MR1183224, MR1451624}.

The starting point is the uniformization theorem that states that in any conformal class there exists a unique (up to an isometry) metric of constant Gauss curvature and fixed area. Note that the area assumption is unnecessary unless $\chi(M) = 0$ in which case we fix the volume to be equal to $1$. We start with the case $\chi(M)<0$ corresponding to hyperbolic metrics.

\subsection{Orientable hyperbolic surfaces: collar theorem} We start with the definition. 

\begin{definition}
A Riemannian metric $h$ of constant Gaussian curvature $-1$ is called {\em hyperbolic}. A Riemannian surface $(M,h)$ endowed with a hyperbolic metric $h$ is called {\em a hyperbolic surface}. 
\end{definition}

%
%
%
Note that a hyperbolic surface necessarily has negative Euler characteristic.
We recall one of the underlying facts of this theory: the {\em Collar Theorem}. Orientable case is well-known and can be found e.g. in~\cite{MR1183224}.

\begin{definition}
A compact Riemann surface $Y$ of genus 0 with 3 boundary components is called {\em a pair of pants}.
\end{definition}

\begin{theorem}[Collar theorem]\label{Collar theorem}
Let $(\widetilde\Sigma,h)$ be an orientable compact hyperbolic surface of genus $\gamma\geqslant 2$ and let $c_1,c_2,\ldots, c_m$ be pairwise disjoint simple closed geodesics on $(\widetilde\Sigma,h)$. Then the following holds
\begin{enumerate}[(i)]
\item $m \leqslant 3 \gamma-3$.
\item There exist simple closed geodesics $c_{m+1},\ldots,c_{3 \gamma-3}$ which, together with $c_1,\ldots,c_m$, decompose $\widetilde\Sigma$ into pairs of pants.
\item The collars 
\begin{align*}
\mathcal{C}(c_i)=\left\{p\in \widetilde\Sigma~|~ dist(p,c_i) \leqslant w(c_i)\right\}
\end{align*}
of widths 
\begin{align*}
w(c_i)=\frac{\pi}{l(c_i)}\left(\pi-2\arctan\left(\sinh\frac{l(c_i)}{2}\right)\right)
\end{align*}
are pairwise disjoint for $i=1,...,3 \gamma-3$.
\item
Each $\mathcal{C}(c_i)$ is isometric to the cylinder $\left\{(t,\theta)| -w(c_i)<t<w(c_i),\,\theta\in\mathbb{R}/2\pi\mathbb{Z}\right\}$ with the Riemannian metric 

\begin{align*}
\left(\frac{l(c_i)}{2\pi \cos\left(\frac{l(c_i)}{2\pi}t\right)}\right)^2\left(dt^2+d\theta^2\right).
\end{align*}

\end{enumerate}
\end{theorem}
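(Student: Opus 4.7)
The plan is to follow the classical argument from Buser's book (\cite{MR1183224}), organized into three separate threads corresponding to the topological, the local-geometric, and the global-geometric claims.

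For (i) and (ii), the approach is purely topological. Each $c_i$ is a simple closed geodesic on $(\widetilde\Sigma,h)$ with $\gamma\geq 2$, hence an essential simple closed curve (a geodesic cannot bound a disk, since the disk would contradict Gauss--Bonnet). Cutting along $\{c_1,\ldots,c_m\}$ yields a finite collection of hyperbolic surfaces with geodesic boundary, whose total Euler characteristic is $2-2\gamma$. I would then extend the collection by adding new disjoint essential simple closed curves, cutting again and iterating, until each piece is either a pair of pants or cannot be further cut essentially. A standard argument shows the latter is only possible for pairs of pants, so the final decomposition consists of $2\gamma-2$ pairs of pants (each of Euler characteristic $-1$), and counting boundary curves with multiplicity $2$ gives $3\gamma-3$ curves; in particular $m\leq 3\gamma-3$. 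The added curves can be realized as geodesics in the free homotopy class by the uniformization theorem applied to each piece (or by shortening arguments on $\widetilde\Sigma$ itself).

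For (iv), I would work in the universal cover $\mathbb{H}^2$ with the Fuchsian group $\Gamma$ so that $\widetilde\Sigma=\Gamma\backslash\mathbb{H}^2$. Each $c_i$ is the projection of the axis $A_i$ of a primitive hyperbolic element $g_i\in\Gamma$ of translation length $l(c_i)$. In Fermi coordinates $(\rho,\tau)$ around $A_i$, with $\rho$ the signed perpendicular distance and $\tau\in\mathbb R/l(c_i)\mathbb Z$ the arclength parameter along $A_i$, the hyperbolic metric reads $d\rho^2+\cosh^2(\rho)\,d\tau^2$. A conformal change of variable obtained by integrating $d\rho/\cosh\rho=du$ (the Gudermannian substitution, yielding $\sinh\rho=\tan u$ and $\cosh\rho=\sec u$) together with a rescaling $t=\tfrac{2\pi}{l(c_i)}u$ and $\theta=\tfrac{2\pi}{l(c_i)}\tau$ converts the metric into the conformally flat form of (iv); the range of $t$ where the substitution is defined is controlled by $\cos(\tfrac{l(c_i)}{2\pi}t)>0$.

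For (iii), I would use the standard hyperbolic collar lemma. The key estimate is that in $\mathbb{H}^2$, for any $\alpha\in\Gamma$ not commuting with $g_i$, the translate $\alpha A_i$ stays at hyperbolic distance at least $W_i$ from $A_i$, where $W_i=\operatorname{arcsinh}(1/\sinh(l(c_i)/2))$; this follows from the trigonometric identity relating the length of the common perpendicular of two disjoint geodesics to the translation lengths and the geometry of the right-angled hexagon obtained from a pair of pants containing $c_i$. Translating $W_i$ back through the Gudermannian substitution gives exactly $w(c_i)=\tfrac{\pi}{l(c_i)}(\pi-2\arctan\sinh(l(c_i)/2))$, which means that the open Fermi tube of hyperbolic radius $W_i$ projects injectively to $\widetilde\Sigma$. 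A parallel argument applied to a lift of $c_j$ with $j\neq i$ (whose axis is disjoint from $A_i$) gives that the collars around distinct $c_i$ and $c_j$ are disjoint as well.

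The main obstacle is the last step: proving the sharp separation estimate $W_i=\operatorname{arcsinh}(1/\sinh(l(c_i)/2))$ between $A_i$ and any $\Gamma$-translate (or any other axis of a disjoint closed geodesic). This requires either a careful pants-decomposition argument reducing to the hexagon formula for the distance in a right-angled hyperbolic hexagon, or an extremal argument using commutators in $\Gamma$; in both cases the crux is a hyperbolic trigonometric identity, and the bulk of the proof lies in verifying it. Everything else is largely bookkeeping once the uniformization theorem and the Fermi-coordinate form of the hyperbolic metric near a geodesic are in hand.
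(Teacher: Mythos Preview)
The paper does not prove this theorem; it states it as a classical result and refers the reader to Buser's book~\cite{MR1183224} for the orientable case. Your proposal to follow Buser's argument is therefore exactly in line with what the paper relies on, and your outline of the three threads (topological count for (i)--(ii), Fermi coordinates with the Gudermannian substitution for (iv), and the collar-width estimate $W_i=\operatorname{arcsinh}(1/\sinh(l(c_i)/2))$ for (iii)) is correct; in particular your conversion of $W_i$ into the $t$-coordinate does reproduce the stated $w(c_i)$.
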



The decomposition of $(\widetilde\Sigma,h)$ into pair of pants is called \textit{the pants decomposition}. We denote it by $\mathcal{P}$. We say that the geodesics $c_1,\ldots,c_{3 \gamma-3}$ form $\mathcal{P}$. 

\subsection{Non-orientable hyperbolic surfaces: collar theorem}
In this section we discuss the case of non-orientable surfaces. Let $(\Sigma,h)$ be a non-orientable hyperbolic surface and let $\pi: \widetilde\Sigma \to \Sigma$ be the orientable double cover. Lifting the metric $h$ to $\widetilde{\Sigma}$ we get an orientable hyperbolic surface $(\widetilde{\Sigma}, \pi^*h)$. If $\tau$ is the involution exchanging the leaves of $\pi$, then $\tau$ is an isometry of $(\widetilde{\Sigma}, \pi^*h)$. In other words, the hyperbolic surface $(\widetilde{\Sigma}, \pi^*h)$ is $\tau$-invariant. 

Let $c$ be a simple closed geodesic on $(\Sigma,h)$. The preimage $\pi^{-1}(c)$ is either a $\tau$-invariant simple closed geodesic $\widetilde c$ on $(\widetilde\Sigma,\pi^*h)$ or a pair $\widetilde c_1$, $\widetilde c_2$ of simple closed geodesics such that $\tau(\widetilde c_1) = \widetilde c_2$. Assume $\pi^{-1}(c) = \widetilde c$. Then $\tau$ acts on the collar $\mathcal C(\widetilde c)$ as an isometry $(t,\theta)\to (-t,\theta+\pi)$. Therefore, the $\pi$-image of the cylinder $\mathcal C(\widetilde c)$ is a M\"obius band $\mathcal C(\widetilde c)/\tau$ around $c$. We refer to this M\"obius band as a collar $\mathcal C(c)$ of $c$ and call $c$ a {\em $1$-sided geodesic}. Now, assume $\pi^{-1}(c) = \widetilde c_1\cup\widetilde c_2$. Then $\tau$ exchanges the collars $\mathcal C(\widetilde c_1)$ and $\mathcal C(\widetilde c_2)$ and their $\pi$-image is a cylinder around $c$. We refer to that cylinder as a collar $\mathcal C(c)$ of $c$ and call $c$ a {\em $2$-sided geodesic}. With that we can state the collar theorem in the non-orientable case.

\begin{theorem}[Collar theorem]\label{Collar theorem nor}
Let $(\Sigma,h)$ be a compact non-orientable hyperbolic surface of genus $\gamma \geqslant 2$ and let $c^1_1,c^1_2,\ldots, c^1_{m_1},c^2_1,\ldots,c^2_{m_2}$ be pairwise disjoint simple closed geodesics on $(\Sigma,h)$, where $c^1_i$ are $1$-sided geodesics and $c^2_j$ are $2$-sided geodesics. Then the following holds
\begin{enumerate}[(i)]
\item $m_1+2m_2 \leqslant 3 \gamma-3$.
\item There exist simple closed geodesics $c^1_{m_1+1},\ldots,c^1_{n_1},c^2_{m_2+1},\ldots,c^2_{n_2}$ which, together with $c^1_1,c^1_2,\ldots, c^1_{m_1},c^2_1,\ldots,c^2_{m_2}$, decompose $\Sigma$ into pairs of pants. Moreover, $c^1_i$ are $1$-sided geodesics, $c^2_j$ are $2$-sided geodesics and $n_1+2n_2 = 3\gamma-3$.
\item The collars 
\begin{align*}
\mathcal{C}(c^\alpha_i)=\left\{p\in \widetilde\Sigma~|~ dist(p,c^\alpha_i) \leqslant w(c^\alpha_i)\right\}
\end{align*}
of widths 
\begin{align*}
w(c^\alpha_i)=\frac{\pi}{\alpha l(c^\alpha_i)}\left(\pi-2\arctan\left(\sinh\frac{\alpha l(c^\alpha_i)}{2}\right)\right)
\end{align*}
are pairwise disjoint for $i=1,...,3 \gamma-3$, $\alpha=1,2$.

\item
Each $\mathcal{C}(c^2_i)$ is isometric to the cylinder $\{(t,\theta)| -w(c^1_i)<t<w(c^1_i),\,\theta\in\mathbb{R}/2\pi\mathbb{Z}\}$ with the Riemannian metric 

\begin{align*}
\left(\frac{l(c^2_i)}{2\pi \cos\left(\frac{l(c^2_i)}{2\pi}t\right)}\right)^2\left(dt^2+d\theta^2\right).
\end{align*}

\item
Each $\mathcal{C}(c^1_i)$ is isometric to the M\"obius band $\{(t,\theta)| -w(c^2_i)<t<w(c^2_i),\, \,\theta\in\mathbb{R}/2\pi\mathbb{Z}\}/\sim$, where $(t,\theta)\sim(-t,\theta+\pi)$, with the Riemannian metric 

\begin{align*}
\left(\frac{l(c^1_i)}{\pi \cos\left(\frac{l(c^1_i)}{\pi}t\right)}\right)^2\left(dt^2+d\theta^2\right).
\end{align*}

\end{enumerate}
\end{theorem}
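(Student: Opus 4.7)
The strategy is to lift everything to the orientable double cover $\pi\colon\widetilde\Sigma\to\Sigma$ and reduce to the orientable Collar Theorem (Theorem~\ref{Collar theorem}). In the paper's convention $\widetilde\Sigma$ is orientable of genus $\gamma$, the pullback $\pi^*h$ is hyperbolic, and the deck involution $\tau$ is a fixed-point-free, orientation-reversing isometry of $(\widetilde\Sigma,\pi^*h)$. The general principle of covering space theory identifies $\tau$-equivariant data upstairs with arbitrary data downstairs, so everything reduces to constructing the analogous data on $\widetilde\Sigma$ in a $\tau$-equivariant way.

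First I would analyze the lifts of the given geodesics. A $1$-sided geodesic $c^1_i$ has connected $\tau$-invariant preimage $\widetilde c^1_i$ of length $2\,l(c^1_i)$, on which $\pi$ restricts to a two-to-one cover; a $2$-sided geodesic $c^2_j$ has preimage $\widetilde c^2_j\sqcup\tau(\widetilde c^2_j)$ consisting of two disjoint simple closed geodesics of length $l(c^2_j)$ interchanged by $\tau$. These produce a $\tau$-invariant collection of $m_1+2m_2$ pairwise disjoint simple closed geodesics on $\widetilde\Sigma$, and statement (i) is immediate from the orientable bound $3\gamma-3$ in Theorem~\ref{Collar theorem}.

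For (ii) I would upgrade this $\tau$-invariant collection to a $\tau$-invariant pants decomposition of $\widetilde\Sigma$ inductively. Whenever a complementary component $Y$ is not a pair of pants, I pass to the quotient $Y/\tau$, a compact surface (possibly non-orientable) with boundary of sufficient complexity to admit an interior simple closed geodesic $\gamma$; lifting $\gamma$ to $Y$ gives either a $\tau$-invariant simple closed geodesic (if $\gamma$ is $1$-sided in $Y/\tau$) or a $\tau$-exchanged pair of disjoint ones (if $\gamma$ is $2$-sided in $Y/\tau$), and if instead $\tau(Y)\neq Y$ one simply adds an interior simple closed geodesic of $Y$ together with its $\tau$-image. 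After finitely many steps one obtains a $\tau$-invariant pants decomposition of $\widetilde\Sigma$ whose quotient by $\tau$ is a pants decomposition of $\Sigma$ in which $\tau$-invariant curves with nontrivial $\tau$-action descend to $1$-sided geodesics and $\tau$-exchanged pairs descend to $2$-sided geodesics, yielding the count $n_1+2n_2=3\gamma-3$.

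Statements (iii)--(v) then follow by applying Theorem~\ref{Collar theorem} to the lifted family and pushing the model collars down by $\pi$. A $2$-sided collar is the isometric image under $\pi$ of either of the two disjoint orientable collars of its lifts, which gives (iv) upon substituting the lift length $l(c^2_j)$. A $1$-sided collar is the $\tau$-quotient of the $\tau$-invariant orientable collar of $\widetilde c^1_i$; since $\tau$ must restrict to a free, orientation-reversing, isometric involution of the cylindrical collar that fixes the core setwise, in the standard coordinates it is forced to be $(t,\theta)\mapsto(-t,\theta+\pi)$, and substituting the lift length $2\,l(c^1_i)$ into the orientable metric formula yields (v); disjointness (iii) follows from disjointness in $\widetilde\Sigma$ together with $\tau$-equivariance. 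The main obstacle is the equivariant pants decomposition step in (ii), namely verifying that at each inductive stage one can exhibit inside a $\tau$-invariant complementary component an interior simple closed geodesic that is either $\tau$-invariant or disjoint from its $\tau$-image; this requires care but is a standard exercise in the $2$-dimensional topology of the quotient surface $Y/\tau$. Once (ii) is in hand, the explicit widths and metrics in (iii)--(v) are a mechanical substitution of the lift-length relation into the orientable formulas.
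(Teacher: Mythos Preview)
Your proposal is correct and follows essentially the same route as the paper: lift to the orientable double cover, analyze how $1$- and $2$-sided geodesics lift, and deduce (i), (iii)--(v) directly from the orientable Collar Theorem together with the explicit description of how $\tau$ acts on the lifted collars. The only difference is in (ii): where you sketch an inductive construction of a $\tau$-invariant pants decomposition by cutting each $\tau$-invariant complementary piece along a geodesic chosen in the quotient $Y/\tau$, the paper simply invokes the result of Buser--Sepp\"al\"a~\cite{MR1174602} that any $\tau$-invariant family of disjoint simple closed geodesics on $\widetilde\Sigma$ extends to a $\tau$-invariant pants decomposition. Your inductive outline is sound (and is essentially what that reference does), so the two proofs coincide up to whether this step is cited or spelled out.
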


\begin{proof}
We consider the preimages of all the geodesics on the orientable double cover $\widetilde\Sigma$. We then have a $\tau$-invariant set of simple closed geodesics on $\widetilde\Sigma$. It is proved in the paper~\cite{MR1174602} that every $\tau$-invariant set of simple closed geodesics can be complemented to the $\tau$-invariant set of $3\gamma-3$ simple closed geodesics. This proves $(i)$. The rest follows from the orientable Collar theorem and the discussion above.
\end{proof}

\subsection{Convergence of hyperbolic metrics: orientable case} 
In this section we recall compactness properties of hyperbolic metrics. Our exposition essentially follows the book~\cite{MR1451624}.
Let $\widetilde\Sigma$ be an orientable surface of genus $\gamma\geqslant 2$ and let $\{h_n\}$ be a sequence of hyperbolic metrics on $\widetilde\Sigma$. 
\begin{proposition}[Mumford's compactness theorem]
\label{Mumford}
Assume that the injectivity radii $\inj(\widetilde\Sigma,h_n)$ satisfy $\limsup\limits_{n\to\infty}\inj(\widetilde\Sigma,h_n)>0$. Then there exists a subsequence $\{h_{n_k}\}$, sequence $\{\Phi_k\}$ of smooth automorphisms of $\widetilde\Sigma$ and a hyperbolic metric $h_\infty$ on $\widetilde\Sigma$ such that the sequence of hyperbolic metrics $\{\Phi_k^*h_{n_k}\}$ converges in $C^\infty$-topology to $h_\infty$.
\end{proposition}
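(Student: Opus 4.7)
The strategy is to combine Bers' theorem on short pants decompositions with the collar theorem (Theorem~\ref{Collar theorem}) to reduce the statement to compactness in Fenchel--Nielsen coordinates modulo the action of the mapping class group, and then to upgrade coordinate convergence to smooth convergence of metrics via standard Riemannian compactness.

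First I would pass to a subsequence so that $\inj(\widetilde\Sigma, h_n) \geqslant \varepsilon$ for a uniform $\varepsilon > 0$. By Bers' theorem there is a constant $L = L(\gamma)$ such that each $h_n$ admits a pants decomposition $\mathcal{P}_n = \{c_1^n, \ldots, c_{3\gamma-3}^n\}$ with $l_{h_n}(c_i^n) \leqslant L$. Since any closed geodesic of length $l$ forces the injectivity radius at its points to be at most $l/2$, the uniform lower bound on $\inj(\widetilde\Sigma, h_n)$ implies that $l_{h_n}(c_i^n) \geqslant 2\varepsilon$. Thus the Fenchel--Nielsen length coordinates of $h_n$ relative to $\mathcal{P}_n$ lie in the fixed compact interval $[2\varepsilon, L]$.

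Next, there are only finitely many topological types of pants decomposition on $\widetilde\Sigma$ up to the action of the mapping class group $\mathrm{Mod}(\widetilde\Sigma)$. After passing to a further subsequence I may assume that every $\mathcal{P}_n$ is carried by a smooth diffeomorphism $\Psi_n$ of $\widetilde\Sigma$ to a fixed reference pants decomposition $\mathcal{P}$. In the Fenchel--Nielsen coordinates $(l_i^n, \tau_i^n)$ of $\Psi_n^*h_n$ relative to $\mathcal{P}$, the lengths still satisfy $l_i^n \in [2\varepsilon, L]$; composing each $\Psi_n$ with suitable powers of Dehn twists along the cuffs of $\mathcal{P}$ further brings each twist $\tau_i^n$ into a fixed bounded fundamental domain. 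Let $\Phi_n$ denote the resulting diffeomorphisms. Extracting a further subsequence, the coordinates converge to some $(l_i^\infty, \tau_i^\infty)$, which determines a hyperbolic metric $h_\infty$ on $\widetilde\Sigma$.

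Finally I would upgrade coordinate convergence to $C^\infty$-convergence of the metrics $g_n := \Phi_n^* h_n$. By construction they have constant curvature $-1$ and, thanks to the collar theorem applied in the reference decomposition $\mathcal{P}$, a uniform positive lower bound on injectivity radius depending only on $\varepsilon$ and $L$. Cheeger--Gromov compactness then yields $C^{k,\alpha}$-subconvergence for every $k$, and elliptic bootstrapping applied to the Einstein-type equation $\mathrm{Ric}(g) = -g$ upgrades this to $C^\infty$; the limit must coincide with $h_\infty$ by uniqueness of the hyperbolic metric with prescribed Fenchel--Nielsen data. The step I expect to be the main technical obstacle is the mapping class group reduction in the third paragraph, where one has to produce honest smooth diffeomorphisms $\Phi_n$ realizing the topological identifications and the Dehn twist adjustments while keeping the Fenchel--Nielsen data under control.
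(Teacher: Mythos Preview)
The paper does not give its own proof of this proposition: it is stated as a classical fact (``Mumford's compactness theorem''), with the surrounding section explicitly declaring that the exposition follows the book~\cite{MR1451624}. There is therefore nothing in the paper to compare your argument against line by line.

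That said, your outline is a standard and essentially correct route to the result. A few remarks. First, for closed orientable surfaces the mapping class group acts transitively on isotopy classes of pants decompositions, so your ``finitely many topological types'' step in fact yields a single reference decomposition $\mathcal{P}$; this simplifies the bookkeeping. Second, once the Fenchel--Nielsen coordinates converge, you do not really need to invoke Cheeger--Gromov compactness as a separate black box: the hyperbolic metric depends smoothly (indeed real-analytically) on the Fenchel--Nielsen parameters, so convergence of $(l_i^n,\tau_i^n)$ already gives $C^\infty$-convergence of the corresponding metrics on the fixed model surface. This sidesteps the worry you flag about manufacturing the smooth diffeomorphisms $\Phi_n$, since the Fenchel--Nielsen construction builds the metric directly on the reference surface. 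If you do want to run Cheeger--Gromov instead, note that it produces its own diffeomorphisms, and you would then need to argue that these can be absorbed into the $\Phi_n$ without spoiling convergence; the FN route avoids that extra step.
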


We say that a sequence $\{h_n\}$ {\em degenerates} if it does not satisfy the assumptions of Mumford's compactness theorem, i.e. if $\lim\limits_{n\to\infty}\inj(\widetilde\Sigma,h_n)=0$. We now turn to {\em Deligne-Mumford compactification} which allows one to associate a limiting object to a degenerating sequence of hyperbolic metrics. For the remainder of this section assume that $\inj(\widetilde\Sigma,h_n)\to0$.

Under this assumption the thick-thin decomposition implies that for each $n$ there exists a collection $\{c_1^n,\ldots,c_s^n\}$ of disjoint simple closed geodesics in $(\widetilde\Sigma,h_n)$ whose lengths tend to $0$. Moreover, the length of any geodesic in the complement $\widetilde\Sigma_n = \widetilde\Sigma\backslash (c_1^n\cup\ldots\cup c_s^n)$ is bounded from below by a constant independent of $n$. Each $(\widetilde\Sigma_n, h_n)$ is possibly a disconnected hyperbolic surface with geodesic boundary. Up to a choice of a subsequence all components of $\widetilde\Sigma_n$ have the same topological type. We denote by $\widehat{\Sigma_\infty}$ the surface having the same connected components as $\widetilde\Sigma_n$, but with boundary component replaced by marked points. Each sequence $\{c_i^n\}$ gives rise to a pair of marked points $\{p_i,q_i\}$ on $\widehat{\Sigma_\infty}$, $i=1,\ldots, s$. Let us denote by $\Sigma_\infty$ the punctured surface $\widehat{\Sigma_\infty}\backslash\{p_1,q_1,\ldots,p_s,q_s\}$ and by $h_\infty$ the complete hyperbolic metric on $\Sigma_\infty$ with cusps at punctures.

\begin{proposition}[Deligne-Mumford compactification]\label{D-M} 
Let $(\widetilde\Sigma, h_n)$ be a sequence of hyperbolic surfaces such that $\inj(\widetilde\Sigma,h_n)\to 0$. Then up to a choice of subsequence, there exists a sequence of diffeomorphisms $\Psi_n: \Sigma_\infty \to \Sigma_n$ such that the sequence $\{\Psi^*_n h_n\}$ of hyperbolic metrics  converges in $C_{\mathrm{loc}}^\infty$-topology to the complete hyperbolic metric $h_\infty$ on $\Sigma_\infty$. Furthermore, there exists a metric of locally constant curvature $\widehat{h_\infty}$ on $\widehat{\Sigma_\infty}$ such that its restriction to $\Sigma_\infty$ is conformal to $h_\infty$.
\begin{remark}
We say that $\widehat{h_\infty}$ has {\em locally} constant curvature, because $\widehat{\Sigma_\infty}$ could be disconnected and different connected components could have different signs of Euler characteristic. 
\end{remark}

%
%
%
%
%

\end{proposition}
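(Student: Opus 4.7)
The plan is to combine the Margulis thick-thin decomposition for hyperbolic surfaces with Mumford's compactness theorem~\ref{Mumford} applied to the \emph{thick part}, and then to identify the degenerating collars with cusps via the Collar Theorem~\ref{Collar theorem}.

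First I would use the Margulis lemma to extract, for each $n$, the disjoint simple closed geodesics $c_1^n,\ldots,c_s^n$ whose lengths tend to $0$; on the complement $\widetilde\Sigma_n=\widetilde\Sigma\setminus\bigcup_i c_i^n$ the injectivity radius is bounded below by a universal constant depending only on the genus. Passing to a subsequence I may assume that $s$ and the topological type of $\widetilde\Sigma_n$ are independent of $n$, so that the model surface $\widehat{\Sigma_\infty}$ is well defined. Then I would apply Mumford's compactness theorem to each connected component of $\widetilde\Sigma_n$, either directly via the version for surfaces with geodesic boundary, or by doubling across the boundary and applying Proposition~\ref{Mumford} to the closed double. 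This yields, after passing to a further subsequence, diffeomorphisms between a fixed model component and $\widetilde\Sigma_n$ together with $C^\infty$-convergence of the pullback metrics to a hyperbolic metric with geodesic boundary.

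The most delicate step, which I expect to be the main obstacle, is to extend this convergence across the degenerating collars and thereby recover a \emph{complete} limiting metric with cusps on $\Sigma_\infty$. By the Collar Theorem, $\mathcal{C}(c_i^n)$ is isometric to an explicit cylinder whose width $w(c_i^n)$ diverges as $l(c_i^n)\to 0$. A direct computation using the reparametrization $u=w(c_i^n)-t$ centered at one end of the collar shows that the cylindrical metric converges in $C^\infty_{\mathrm{loc}}$ on $\{u>0\}\times(\mathbb R/2\pi\mathbb Z)$ to the standard cusp metric $u^{-2}(du^2+d\theta^2)$; the analogous statement holds at the other end. Assigning the two ends of each collar to the marked points $p_i$ and $q_i$ and splicing these near-cusp diffeomorphisms with those produced by Mumford's theorem on the thick part yields the desired $\Psi_n:\Sigma_\infty\to\widetilde\Sigma_n$ and the $C^\infty_{\mathrm{loc}}$-convergence of $\Psi_n^*h_n$ to a complete hyperbolic metric $h_\infty$ with $2s$ cusps.

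Finally, to construct $\widehat{h_\infty}$, I would use the classical fact that near each cusp the hyperbolic metric $u^{-2}(du^2+d\theta^2)$ is conformally equivalent, via a change of variables of the form $v=e^{-u+i\theta}$, to the flat metric $|dv|^2$ on a punctured disk around $v=0$. Since this Euclidean metric extends smoothly across the puncture, it endows $\widehat{\Sigma_\infty}$ with a well defined Riemann surface structure extending the conformal structure of $h_\infty$ on $\Sigma_\infty$. Applying the uniformization theorem to each connected component then produces a metric $\widehat{h_\infty}$ of constant Gaussian curvature on that component whose restriction to $\Sigma_\infty$ is conformal to $h_\infty$. Because different components of $\widehat{\Sigma_\infty}$ may have different Euler characteristics (for instance a thrice-punctured sphere arising from a collapsing pair of pants), the curvature is only \emph{locally} constant, as stated.
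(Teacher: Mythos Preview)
The paper does not actually prove this proposition; it is a background result quoted from \cite{MR1451624} (cf.\ Remark~\ref{pants}). Your outline follows the standard strategy, but there is a genuine gap in the second step.

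You propose to apply Mumford's compactness theorem to the complement $\widetilde\Sigma_n=\widetilde\Sigma\setminus\bigcup_i c_i^n$, either via a version for surfaces with geodesic boundary or by doubling across the boundary and invoking Proposition~\ref{Mumford}. Neither works as stated: the boundary components of $\widetilde\Sigma_n$ are precisely the pinching geodesics $c_i^n$, whose lengths $l(c_i^n)\to 0$. Doubling across them produces a closed hyperbolic surface containing closed geodesics of length $l(c_i^n)\to 0$, so its injectivity radius still tends to zero and Proposition~\ref{Mumford} does not apply. Similarly, any compactness theorem for hyperbolic surfaces with geodesic boundary requires the boundary lengths to stay bounded away from zero; allowing them to degenerate to cusps is exactly the content of the Deligne--Mumford statement you are trying to prove, so invoking such a version here is circular.

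The standard remedy is to work not with $\widetilde\Sigma_n$ but with the $\varepsilon$-thick part $\{p:\inj_p(h_n)\geqslant\varepsilon\}$ for a fixed $\varepsilon>0$ below the Margulis constant. Its boundary components have definite length (controlled by $\varepsilon$, independent of $n$), so a genuine compactness argument applies there; one then lets $\varepsilon\to 0$ via a diagonal extraction to obtain convergence on all of $\Sigma_\infty$. Your Step~3 (the collar computation showing $C^\infty_{\mathrm{loc}}$-convergence to the cusp metric $u^{-2}(du^2+d\theta^2)$) and Step~4 (extending the conformal structure across the punctures and uniformizing each component of $\widehat{\Sigma_\infty}$) are correct and are indeed the way the final statement is obtained.
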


\begin{remark}
\label{pants}
For the general case of hyperbolic surfaces with boundary and cusps see \cite[Proposition 5.1]{MR1451624}.
\end{remark}

When the statement of Proposition~\ref{D-M} holds for the full sequence $\{h_n\}$ we say that $(\widehat{\Sigma_\infty},\widehat{h_\infty})$ is a {\em limiting space} of the sequence $(\Sigma,h_n)$. Similarly, we say that the limit of conformal classes $[h_n]$ is the conformal class $[\widehat{h_\infty}]$ on $\widehat{\Sigma_\infty}$.

\subsection{Convergence of hyperbolic metrics: non-orientable case} To the best of our knowledge, there is no straightforward argument that allows to generalize the contents of the previous section to the non-orientable case. The natural approach is to pass to the double cover to obtain a sequence of hyperbolic $\tau$-invariant metrics and then show that the diffeomorphisms $\Phi_n$ and $\Psi_n$ can be chosen to commute with $\tau$. This approach is taken for example in~\cite[Section 6]{Seppala1991}. In particular, he proves that both Proposition~\ref{Mumford} and~\ref{D-M} hold  for non-orientable surfaces without changes. We remark that the limiting surface $\widehat{\Sigma_\infty}$ can have orientable and non-orientable connected components. 

\begin{remark} \label{conf&conv remark} 
Any conformal class on $\widehat{\Sigma_\infty}$ can be obtained as a limit of conformal classes $[h_n]$ on $\Sigma$. Indeed, consider $\widehat{\Sigma_\infty}$ and a conformal class $[g]$ on it, marked by some metric $g$. Removing points $p_i$ and $q_i$, we then find a hyperbolic metric $h$ in the conformal class $[g_|{_{\widehat{\Sigma_\infty}\setminus \cup^s_{i=1}\{p_i,q_i\}}}]$ to obtain a hyperbolic surface with cusps. Take a pants decomposition of $(\widehat{\Sigma_\infty}\setminus \cup^s_{i=1}\{p_i,q_i\},h)$ and consider singular pants, i.e. pants with cusps instead of boundary. 
For each $\varepsilon>0$ consider a surface with boundary obtained by replacing cusps with boundary components of length $\varepsilon$. Gluing the boundary component corresponding to $p_i$ with the boundary component corresponding to $q_i$ we obtain a hyperbolic surface $(\Sigma,h_\varepsilon)$. From the construction of Deligne-Mumford compactification, it follows that $(\widehat{\Sigma_\infty},[g])$ is the limiting space of $(\Sigma,h_\varepsilon)$ as $\varepsilon\to 0$.
\end{remark}


\subsection{Moduli space in non-negative Euler characteristic}
\label{non-neg}
 Having discussed the hyperbolic surfaces that correspond to the negative Euler characteristic, we proceed to the remaining surfaces: $\mathbb{S}^2$, $\mathbb{RP}^2$, $\mathbb{T}^2$ and $\mathbb{KL}$. In case of $\mathbb{S}^2$ and $\mathbb{RP}^2$ there is a unique conformal class of metrics and as a result the moduli space of conformal classes is a single point. We give an explicit description of the moduli space for $\mathbb{T}^2$ and $\mathbb{KL}$ below.

On the torus $\mathbb{T}^2$ the moduli space of conformal classes is a subset of $\mathbb{R}^2$ given by $\left\{(a,b)|\,a^2+b^2\geqslant 1,\ 0\leqslant a\leqslant 1/2\right\}$. To each $(a,b)$ one can associate a lattice $\Lambda_{a,b}$ in $\mathbb{R}^2$ spanned by vectors $(1,0)$ and $(a,b)$. Then the flat metric $g_{a,b}$ of unit volume on $\mathbb{R}^2/(b^{-\frac{1}{2}}\Lambda_{a,b})$ is a canonical representative of the corresponding conformal class. Let $(a_n,b_n)$ be a sequence of points on the moduli space. Then this sequence has an accumulation point unless $b_n\to+\infty$. Therefore, a degenerating sequence of conformal classes corresponds to $b_n\to+\infty$. Similarly to the hyperbolic case, for the degenerating sequence $(a_n, b_n)$ the injectivity radius $\inj(\mathbb{T}^2,g_{a_n,b_n})\to 0$ as the length of the geodesic $c_n$ corresponding to 
the vector $(b_n^{-\frac{1}{2}},0)$ goes to zero. Moreover, $c_n$ has a cylindrical collar of width $\frac{1}{2}\sqrt{\frac{a_n^2+b_n^2}{b_n}}$ and the limiting space is the sphere $\mathbb{S}^2$ with its unique conformal class.

On the Klein bottle the moduli space of conformal classes is the set of positive real numbers $\mathbb R_{+}$. To each $b>0$ one can associate a group $G_b$ of isometries of $\mathbb{R}^2$ generated by $(x,y)\mapsto(x,y+b^\frac{1}{2})$ and $(x,y)\mapsto (x+b^{-\frac{1}{2}}, -y)$. Then the flat metric $g_b$ of unit volume on $\mathbb{R}^2/G_b$ is a canonical representative of the corresponding conformal class. The sequence of points $\{b_n\}$ has an accumulation point unless $b_n\to 0$ or $b_n\to +\infty$. Therefore, there are two types of degenerating sequences of conformal classes: those corresponding to $b_n\to 0$ and those corresponding to $b_n\to +\infty$. Assume $b_n\to 0$. Then the lengths of geodesics $c_n$ corresponding to the vector $(0,b_n^\frac{1}{2})$ go to zero. Moreover, $c_n$ has a cillindrical collar of width $\frac{1}{2}b_n^{-\frac{1}{2}}$, i.e. $c_n$ is a $2$-sided geodesic, and the limiting space is the sphere $\mathbb{S}^2$ with its unique conformal class. Assume $b_n\to+\infty$. Then the lengths of geodesics $d_n$ corresponding to the vector $(b_n^{-\frac{1}{2}},0)$ go to zero. Moreover, $d_n$ has a M\"obius band collar of width $\frac{1}{2}b_n^{\frac{1}{2}}$, i.e. $d_n$ is a $1$-sided geodesic, and the limiting space is the projective plane $\mathbb{RP}^2$ with its unique conformal class. Either way, $\inj(\mathbb{KL},g_{b_n})\to 0$.

\subsection{Degenerating conformal classes}

From now on we no longer use $c$ to denote geodesics and reserve the letter $c$ to denote conformal classes.
\begin{definition}
Let $M$ be a surface and let $\{c_n\}$ be a sequence of conformal classes on $M$. Let $h_n\in c_n$ be a canonical representative, i.e. $h$ is hyperbolic if $\chi(M)<0$ and $h$ is flat of unit volume if $\chi(M)=0$. We say that $c_n$ {\em degenerates} if $\inj(M,h_n)\to 0$. Furthermore, if $(M,h_n)\to(\widehat{M_\infty},\widehat{h_\infty})$ in the sense of Proposition~\ref{D-M} (if $\chi(M)<0$) or in the sense of Section~\ref{non-neg} (if $\chi(M)=0$), then we say that $c_n$ converges to $c_\infty=[\widehat{h_\infty}]$.
\end{definition}

In~\cite{cianci2019branched} it is shown that if the sequence $c_n$ does not degenerate and converges to $c$ then one has 
$\Lambda_k(M,c_n)\to\Lambda_k(M,c)$.
The main technical result of the present paper establishes the value of the limit of $\Lambda_k(M,c_n)$ when the sequence of conformal classes $c_n$ degenerates.

\begin{theorem}\label{conf&conv}
Let $M$ be a closed compact (orientable or non-orientable) surface and let $c_n\to c_\infty$ be a degenerating sequence of conformal classes. 
Suppose that $\widetilde s$ $2$-sided and $s$ $1$-sided geodesics collapse, so that the surface $\widehat{M_{\infty}}$ has $\widetilde m$ orientable components $\widetilde \Sigma_{\widetilde\gamma_{i}}$ of genus $\widetilde\gamma_i$, $i=1,\ldots,\widetilde m$ and $m$ non-orientable components $\Sigma_{\gamma_{j}}$ of genus $\gamma_j$, $j=1,\ldots, m$. Then one has

\begin{equation}
\label{deg_limit}
\begin{split}
&\lim_{n \to \infty} \Lambda_k (M, c_n) =\\ &\max \Big(\sum^{\widetilde m}_{i=1} \Lambda_{\widetilde k_i}(\widetilde\Sigma_{\widetilde\gamma_{i}}, c_\infty)+\sum^{m}_{i=1} \Lambda_{k_{i}}(\Sigma_{\gamma_{i}},c_\infty) + \sum_{i=1}^{\widetilde s}\Lambda_{\widetilde r_i}(\mathbb{S}^2) + \sum_{i=1}^s\Lambda_{r_i}(\mathbb{RP}^2) \Big),
\end{split}
\end{equation}
where the maximum is taken over all possible combinations of indices such that
 $$
 \sum_{i=1}^m k_i + \sum_{i=1}^{\widetilde m}\widetilde k_i + \sum_{i=1}^s r_i + \sum_{i=1}^{\widetilde s}\widetilde r_i = k.
 $$

%

\end{theorem}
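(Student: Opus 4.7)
The proof consists of matching lower and upper bounds for $\lim_n \Lambda_k(M, c_n)$. I would treat these separately, as they rely on different tools: the lower bound on the construction of test metrics with near-disjoint support on suitably chosen subdomains, and the upper bound on a decomposition argument combined with Neumann-type bracketing. In both cases the global picture is dictated by the Deligne--Mumford description of the degeneration from Proposition~\ref{D-M}: the surface $M$ is to be viewed, for large $n$, as the gluing of perturbations of the stable components of $\widehat{M_\infty}$ and of long cylindrical/M\"obius collars around the collapsing geodesics.

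For the lower bound, I would first establish a strengthening of Proposition~\ref{main lemma} to several disjoint subdomains: if $\Omega_1,\dots,\Omega_p\subset M$ are pairwise disjoint smooth subdomains and $k_1+\dots+k_p=k$, then $\Lambda_k(M,[g])\geqslant \sum_i \Lambda^N_{k_i}(\Omega_i,[g])$. The argument parallels that of Proposition~\ref{main lemma}, using near-maximizing Neumann eigenfunctions on each $\Omega_i$ extended by zero, with a conformal perturbation of $g$ that inflates volume on each $\Omega_i$. Next, for any admissible distribution of indices, I would build subdomains $\Omega_{j,n}\subset M$ compatible with the decomposition of $\widehat{M_\infty}$: for each stable component, pull back via $\Psi_n$ from Proposition~\ref{D-M} an exhausting compact subdomain; for each collapsing geodesic, take the corresponding cylindrical or M\"obius collar minus a small neighborhood of its core. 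On stable components, non-degenerate convergence of the restricted conformal classes together with the continuity result of~\cite{cianci2019branched} gives $\Lambda^N_{\widetilde k_i}(\Omega_{i,n},c_n)\to \Lambda_{\widetilde k_i}(\widetilde\Sigma_{\widetilde\gamma_i},c_\infty)$ and similarly for non-orientable components. On a collapsing $2$-sided collar, the long thin cylinder is conformally equivalent to $\mathbb{S}^2$ with two small spherical caps removed; letting the caps shrink shows $\Lambda^N_{\widetilde r_i}$ converges to $\Lambda_{\widetilde r_i}(\mathbb{S}^2)$. The analogous argument on a $1$-sided collar, using that a long thin M\"obius band is conformally equivalent to $\mathbb{RP}^2$ with a small disk removed, yields $\Lambda^N_{r_i}\to\Lambda_{r_i}(\mathbb{RP}^2)$. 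Summing, passing to $\liminf$, and maximizing over admissible distributions proves $\geqslant$.

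For the upper bound, I would choose a sequence $g_n\in c_n$ with $\overline\lambda_k(M,g_n)$ approaching $\limsup_n \Lambda_k(M,c_n)$ and partition $M$ using Lipschitz cut-offs adapted to the degeneration (a thickening of each stable component plus a collar around each degenerating geodesic, chosen independently of $g_n$). An appropriate bracketing then bounds $\lambda_k(M,g_n)$ from above in terms of a distribution $k=\sum k_i$ together with the Neumann eigenvalues of the pieces, which are in turn bounded by the corresponding conformal Neumann suprema $\Lambda^N_{k_i}$. On stable pieces these converge to $\Lambda_{\widetilde k_i}(\widetilde\Sigma_{\widetilde\gamma_i},c_\infty)$ and $\Lambda_{k_i}(\Sigma_{\gamma_i},c_\infty)$, while on collapsing collars the conformal cylindrical or M\"obius structure forces the Neumann suprema to approach $\Lambda_{\widetilde r_i}(\mathbb{S}^2)$ and $\Lambda_{r_i}(\mathbb{RP}^2)$ respectively. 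Taking $\sup$ over $g_n$, then $\limsup$, and finally the maximum over distributions yields $\leqslant$.

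The main obstacle is the upper bound. Standard Dirichlet bracketing on a fixed partition only bounds Dirichlet eigenvalues, not the closed spectrum, and one must simultaneously take a supremum over every conformal metric $g_n\in c_n$, not merely a single $g_n$; this is what prevents a direct appeal to eigenvalue continuity. The resolution is to carry out the bracketing at the level of the normalized Rayleigh quotient using Neumann conditions on the partition curves, with cut-offs chosen uniformly in $g_n$, which is the content of the Neumann analysis developed in Section~\ref{analysis} and underlies Proposition~\ref{main lemma}. Controlling the error on the thin collars and verifying that it vanishes uniformly in $g_n$ as $n\to\infty$ is the technical heart of Section~\ref{appendix}.
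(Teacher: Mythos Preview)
Your lower bound is essentially the paper's argument: decompose $M$ into collars and complementary pieces, invoke the multi-domain strengthening of Proposition~\ref{main lemma} (which is Lemma~\ref{omega_i} in the paper), map the collars conformally onto $\mathbb{S}^2$ or $\mathbb{RP}^2$ minus small disks and the complementary pieces onto exhausting domains in the stable components via $\Psi_n$, and pass to the limit using Corollary~\ref{Neumann cor2} and Proposition~\ref{N-cont}. This part is fine.

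The upper bound, however, has a genuine gap. You propose to bound $\lambda_k(M,g_n)$ \emph{from above} by Neumann eigenvalues of the pieces via bracketing, but Dirichlet--Neumann bracketing runs the other way: for any partition $M=\overline{\Omega_1}\cup\dots\cup\overline{\Omega_p}$ one has $\lambda_k^N(\sqcup\Omega_i,g_n)\leqslant\lambda_k(M,g_n)\leqslant\lambda_k^D(\sqcup\Omega_i,g_n)$. The Neumann side is a lower bound, and the Dirichlet side cannot be controlled by the conformal suprema $\Lambda_{k_i}^N(\Omega_i,[g])$. Your appeal to Proposition~\ref{main lemma} and Section~\ref{analysis} does not help here: those results establish $\Lambda_k(M,[g])\geqslant\Lambda_k^N(\Omega,[g])$, which is again the lower-bound direction. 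Choosing cut-offs ``uniformly in $g_n$'' cannot repair this, because a near-maximizing $g_n$ may concentrate its mass anywhere in $M$, and without structural information on $g_n$ there is no way to produce $k+1$ test functions adapted to the partition.

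The paper obtains the upper bound by an entirely different route. If $\Lambda_k(M,c_n)>\Lambda_{k-1}(M,c_n)+8\pi$ along a subsequence, Petrides' existence theorem produces an exact maximizer $g_n\in c_n$ induced by a harmonic map $\Phi_n\colon(M,h_n)\to\mathbb{S}^N$. One then analyzes the measures $|\nabla\Phi_n|^2\,dv$ via bubble convergence for harmonic maps: Proposition~\ref{sequences} locates the mass in finitely many sub-collars and thick pieces, the maps are transplanted conformally to $\mathbb{S}^2$, $\mathbb{RP}^2$, or $\widehat{M_\infty}$, and eigenfunctions of the weak-$*$ limits (together with the bubbles) are used as test functions for $\lambda_k(M,g_n)$. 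The complementary case $\Lambda_k(M,c_n)\leqslant\Lambda_{k-1}(M,c_n)+8\pi$ is handled by induction on $k$, absorbing the extra $8\pi=\Lambda_1(\mathbb{S}^2)$ into the maximum via inequality~\eqref{colbois}. The essential missing idea in your proposal is this use of the harmonic-map structure of maximizers to control mass distribution; a partition-and-bracket argument uniform in $g_n$ cannot substitute for it.
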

\begin{remark}
\label{continuity_remark}
We remark that inequality~\eqref{colbois} implies that the terms $\Lambda_{\widetilde r_i}(\mathbb{S}^2)=8\pi \widetilde r_i$ in the r.h.s of~\eqref{deg_limit} can be absorbed into the other terms. This fact together with Lemma~\ref{disconnected} below allows us to formulate equality~\eqref{deg_limit} in a way that resembles continuity property,
\begin{align*}
&\lim_{n \to \infty} \Lambda_k (M, c_n) =\max_{k-s\leqslant k'\leqslant k, k'\geqslant 0} \Big(\Lambda_{k'}(\widehat{M_\infty},c_\infty) + 12\pi(k-k')\Big),
\end{align*}
where we have used the fact that $\Lambda_{r}(\mathbb{RP}^2)=4\pi(2r+1)$. As a result, the functional $\Lambda_k(M,c_n)$ is not necessarily continuous for degenerating sequences of conformal classes as long as at least a single $1$-sided geodesic collapses.
\end{remark}

\begin{remark}
A result similar to Theorem~\ref{conf&conv} for the Steklov problem has been recently obtained in the paper~\cite{medved} (see Theorem 1.2).
\end{remark}

The proof of Theorem~\ref{conf&conv} is rather technical. We postpone it until Section~\ref{appendix}.

\subsection{Topology of the limiting space} The following purely topological lemma describes the relation between the genera of $M$ and $\widehat{M_\infty}$.

\begin{lemma} 
\label{topology_eq}
\begin{itemize}
\item[(i)] Let $c_n\to c_\infty$ be a degenerating sequence of conformal classes on $\widetilde\Sigma_{\widetilde\gamma}$.
Suppose that $\widetilde s$ geodesics collapse, so that the surface $\widehat{\Sigma_{\gamma,\infty}}$ has $\widetilde m$ components $\widetilde \Sigma_{\widetilde\gamma_{i}}$ of genus $\widetilde\gamma_i$, $i=1,\ldots,\widetilde m$. Then one has
\begin{align}\label{or}
\widetilde\gamma = \widetilde s + |\widetilde\Gamma|-\widetilde m + 1
\end{align}
where $\widetilde\Gamma=\{\widetilde\gamma_{1},\ldots,\widetilde\gamma_{\widetilde m}\}$, $|\widetilde\Gamma| = \sum_{i=1}^m\widetilde\gamma_i$.

\item[(ii)] Let $c_n\to c_\infty$ be a degenerating sequence of conformal classes on $\Sigma_\gamma$. Suppose that $\widetilde s$ $2$-sided and $s$ $1$-sided geodesics collapse, so that the surface $\widehat{\Sigma_{\gamma,\infty}}$ has $\widetilde m$ orientable components $\widetilde \Sigma_{\widetilde\gamma_{i}}$ of genus $\widetilde\gamma_i$, $i=1,\ldots,\widetilde m$ and $m$ non-orientable components $\Sigma_{\gamma_{j}}$ of genus $\gamma_j$, $j=1,\ldots, m$. Then one has

\begin{align}\label{nonor}
\gamma=2(\widetilde s+|\widetilde\Gamma|-\widetilde m)+s+|\Gamma|-m+1,
\end{align}
where $\widetilde\Gamma=\{\widetilde\gamma_{1},\ldots,\widetilde\gamma_{\widetilde m}\}, \Gamma=\{\gamma_{1},\ldots,\gamma_{m}\}$, $|\widetilde\Gamma| = \sum_{i=1}^{\widetilde m}\widetilde\gamma_i$ and $|\Gamma| = \sum_{i=1}^m\gamma_i$.
\end{itemize}
\end{lemma}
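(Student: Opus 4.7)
The plan is to deduce both formulas from a single Euler characteristic computation, using the fact that the Deligne--Mumford limit $\widehat{M_\infty}$ is obtained from $M$ by cutting along the collapsing geodesics and capping the resulting boundary circles.

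First I would record the Euler characteristic of the relevant surfaces. An orientable closed surface of genus $g$ has $\chi=2-2g$, and since the orientable double cover of $\Sigma_\gamma$ has Euler characteristic $2-2\gamma$ and $\chi$ is multiplicative under finite coverings, one obtains $\chi(\Sigma_\gamma)=1-\gamma$. Next I would analyze the topological effect of collapsing a single simple closed geodesic. A $2$-sided geodesic has a tubular neighborhood homeomorphic to an annulus, so cutting along it produces two new boundary circles; a $1$-sided geodesic has a tubular neighborhood homeomorphic to a M\"obius band, so cutting along it produces a single new boundary circle (a connected double cover of the original curve). In both cases the cut preserves $\chi$ (both the annulus and the M\"obius band have $\chi=0$), and then, passing to $\widehat{M_\infty}$, each new boundary circle is replaced by a marked point, i.e.\ capped off by a disk. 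Since attaching a disk adds $1$ to $\chi$, each collapsing $2$-sided geodesic increases the total Euler characteristic by $2$ and each collapsing $1$-sided geodesic increases it by $1$.

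In case (i) all collapsing geodesics on an orientable surface are automatically $2$-sided, so
\begin{equation*}
\chi(\widehat{\Sigma_{\gamma,\infty}}) \;=\; \chi(\widetilde\Sigma_{\widetilde\gamma}) + 2\widetilde s \;=\; 2-2\widetilde\gamma + 2\widetilde s.
\end{equation*}
On the other hand, since the components are $\widetilde m$ orientable surfaces of genera $\widetilde\gamma_i$, the left-hand side equals $\sum_{i=1}^{\widetilde m}(2-2\widetilde\gamma_i)=2\widetilde m-2|\widetilde\Gamma|$. Equating and solving for $\widetilde\gamma$ gives~\eqref{or}.

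In case (ii) the analogous balance reads
\begin{equation*}
\chi(\widehat{\Sigma_{\gamma,\infty}}) \;=\; \chi(\Sigma_\gamma) + 2\widetilde s + s \;=\; 1-\gamma + 2\widetilde s + s,
\end{equation*}
while the left-hand side decomposes as $\sum_{i=1}^{\widetilde m}(2-2\widetilde\gamma_i)+\sum_{j=1}^{m}(1-\gamma_j)=2\widetilde m-2|\widetilde\Gamma|+m-|\Gamma|$. Solving for $\gamma$ yields~\eqref{nonor}. The only delicate point, and what I would view as the main sanity-check of the proof, is the claim that cutting along a $1$-sided geodesic produces a single boundary circle rather than two; this is the reason $s$ (and not $2s$) appears in~\eqref{nonor}, and it follows directly from the description of the collar of a $1$-sided geodesic as a M\"obius band in Theorem~\ref{Collar theorem nor}(v).
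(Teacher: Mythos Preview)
Your proof is correct. For part~(i) your argument is essentially the same as the paper's: both compute the Euler characteristic, the paper phrasing it via Mayer--Vietoris for the decomposition into punctured components and cylinders, while you phrase it as ``cut preserves $\chi$, then cap with disks''; these are two ways of saying the same thing.

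For part~(ii) the approaches genuinely differ. The paper does not argue directly on $\Sigma_\gamma$; instead it passes to the orientable double cover $\widetilde\Sigma_\gamma$, observes that a $2$-sided collapsing geodesic lifts to two collapsing geodesics, a $1$-sided one lifts to a single geodesic, an orientable component lifts to two copies of itself, and a non-orientable component $\Sigma_{\gamma_j}$ lifts to its double cover $\widetilde\Sigma_{\gamma_j}$, and then invokes formula~\eqref{or} upstairs. Your route stays on $\Sigma_\gamma$, using $\chi(\Sigma_\gamma)=1-\gamma$ and the observation that cutting along a $1$-sided geodesic produces exactly one boundary circle. Your argument is a bit more self-contained and makes the role of the $1$-sided geodesics (the ``$s$ rather than $2s$'' point you flag) visible immediately; the paper's double-cover argument has the virtue of reducing everything to the orientable case already proved and avoids having to think separately about M\"obius collars.
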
 

\begin{proof}

$(i)$ The surface $\widetilde\Sigma_{\widetilde\gamma}$ is obtained from components $\widetilde\Sigma_{\widetilde\gamma_i}$ by joining them with $\widetilde s$ cylinders. Recall that Mayer--Vietoris sequence implies that if $M=M_1\cup M_2$, then the Euler characteristics satisfy the following relation, $\chi(M) = \chi(M_1)+\chi(M_2)-\chi(M_1\cap M_2)$. We apply this formula to $M_1$ -- disjoint union of $\widetilde\Sigma_{\widetilde\gamma_i}$ with $\widetilde s_i$ holes, $M_2$ -- disjoint union of $\tilde s$ cylinders, $M$ is $M_1$ and $M_2$ glued by a common boundary.  Since $\sum \widetilde s_i = 2\widetilde s$, one has
$$
2-2\widetilde\gamma=\chi(\widetilde\Sigma_\gamma) = \sum_j(2-2\widetilde\gamma_j - \widetilde s_j) = 2\widetilde m - 2|\widetilde\Gamma| - 2\widetilde s. 
$$
Rearranging the terms yields~\eqref{or}.

$(ii)$ Non-orientable case follows from the orientable case by passing to the double cover: $2$-sided collapsing geodesics lift to a pair of collapsing geodesics; $1$-sided collapsing geodesics lift to a single collapsing geodesic; orientable components $\widetilde\Sigma_{\widetilde\gamma_i}$ lift to two copies of itself and non-orientable components $\Sigma_{\gamma_j}$ lift to its orientable double cover $\widetilde\Sigma_{\gamma_j}.$

\end{proof}

\begin{corollary}
\label{topology_even_genus}
In notations of Lemma~\ref{topology_eq}(ii) assume $\gamma$ is even. Then either $s\ne 0$ or $\gamma_i$ is even for some $i$. 
\end{corollary}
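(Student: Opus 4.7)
The statement is a direct consequence of the formula
\[
\gamma = 2(\widetilde s + |\widetilde \Gamma| - \widetilde m) + s + |\Gamma| - m + 1
\]
given in Lemma~\ref{topology_eq}(ii), so my plan is to argue by contrapositive using a parity reduction of this identity. Concretely, I will assume $s = 0$ and that every $\gamma_i$ is odd, and show that $\gamma$ must then be odd, contradicting the hypothesis that $\gamma$ is even.

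The first step is to observe that the entire quantity $2(\widetilde s + |\widetilde \Gamma| - \widetilde m)$ is manifestly even, so it drops out modulo $2$. Under the assumption $s = 0$, the identity therefore reduces to
\[
\gamma \equiv |\Gamma| - m + 1 \pmod{2}.
\]
Next, since $|\Gamma| = \sum_{i=1}^m \gamma_i$ is a sum of $m$ odd numbers by assumption, one has $|\Gamma| \equiv m \pmod 2$, hence $|\Gamma| - m \equiv 0 \pmod 2$. Combining these gives $\gamma \equiv 1 \pmod 2$, i.e.\ $\gamma$ is odd, which is the desired contradiction.

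There is essentially no obstacle here beyond correctly bookkeeping the parities in~\eqref{nonor}; the orientable contributions $\widetilde s$, $|\widetilde \Gamma|$, $\widetilde m$ play no role whatsoever because they appear with an even coefficient, which is why the statement only constrains $s$ and the $\gamma_i$. I would present the argument as a short one-paragraph proof consisting of the contrapositive reformulation followed by the two-line mod $2$ computation above.
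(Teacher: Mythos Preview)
Your proof is correct and follows essentially the same parity argument as the paper: both reduce formula~\eqref{nonor} modulo $2$, use that $|\Gamma|-m$ is even when all $\gamma_i$ are odd, and derive the conclusion. The only cosmetic difference is that you phrase it as a contrapositive whereas the paper argues directly.
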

\begin{proof}
By Lemma~\ref{topology_eq} one has
$$
\gamma=2(\widetilde s+|\widetilde\Gamma|-\widetilde m)+s+|\Gamma|-m+1.
$$
If $\gamma_i$ is odd for all $i$, then $|\Gamma|-m$ is even. Since $\gamma$ is even, this implies $s+1$ is even, i.e. $s\ne 0$.
\end{proof}

We conclude this section with the following observation.

\begin{lemma}\label{metric sequences}
\begin{itemize}
\item[(i)] On $\widetilde\Sigma_{\widetilde\gamma}$
there exists a degenerating sequence of conformal classes $\{c_n\}$ such that the limiting space 
$\widehat{\Sigma_\infty}$ is a union of spheres.
\item[(ii)] Let $\Sigma_\gamma$ be a non-orientable surface of odd genus $\gamma$. Then there exists a degenerating sequence of conformal classes $\{c_n\}$ such that all the collapsing geodesics are $2$-sided and the limiting space $\widehat{\Sigma_\infty}$ is a union of spheres.
\item[(iii)] Let $\Sigma_\gamma$ be a non-orientable surface of even genus $\gamma\geqslant 2$. Then for any even $\gamma'<\gamma$ and any conformal class $c$ on $\Sigma_{\gamma'}$ there exists a degenerating sequence of conformal classes $\{c_n\}$ such that all the collapsing geodesics are $2$-sided and the limiting space $\widehat{\Sigma_\infty}$ is a union of spheres and a surface $\Sigma_{\gamma'}$ equipped with a conformal class $c$.
\end{itemize}
\end{lemma}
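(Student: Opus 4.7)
I would prove Lemma~\ref{metric sequences} by reversing the degeneration procedure: in each part I prescribe a limiting space $\widehat{\Sigma_\infty}$ together with a set of paired marked points, verify that gluing cylinders between the pairs yields the required surface $M$, and then invoke Remark~\ref{conf&conv remark} to extract a degenerating sequence on $M$ converging to $\widehat{\Sigma_\infty}$. The resulting closed surface is pinned down by the classification of surfaces once its Euler characteristic and orientability are known. The freedom in choosing the gluing (each pair of boundary circles can be identified in two essentially different ways) lets us control orientability.

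For (i) with $\widetilde\gamma \geq 2$, I take $\widehat{\Sigma_\infty} = \mathbb{S}^2$ with $2\widetilde\gamma$ marked points paired into $\widetilde\gamma$ pairs and glue each pair by an orientation-preserving cylinder; the result is orientable with $\chi = 2 - 2\widetilde\gamma$, hence $\widetilde\Sigma_{\widetilde\gamma}$. The case $\widetilde\gamma = 1$ follows directly from Section~\ref{non-neg}, as any sequence with $b_n \to \infty$ degenerates to $\mathbb{S}^2$.

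For (ii) with odd $\gamma \geq 3$, I take $\widehat{\Sigma_\infty} = \mathbb{S}^2$ with $\gamma + 1$ marked points in $(\gamma + 1)/2$ pairs, glue each pair by a cylinder, and arrange for at least one attachment to reverse orientation. The total surface has $\chi = 2 - (\gamma+1) = 1 - \gamma$ and is non-orientable, so by classification it is $\Sigma_\gamma$; every attached neighborhood is a genuine cylinder (never a M\"obius band), so every collapsing geodesic is $2$-sided. The case $\gamma = 1$ follows from Section~\ref{non-neg} via $b_n \to 0$, which collapses a $2$-sided geodesic on the Klein bottle with limit $\mathbb{S}^2$.

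For (iii), given even $\gamma \geq 2$, even $\gamma' < \gamma$ and $c$ on $\Sigma_{\gamma'}$, I take $\widehat{\Sigma_\infty} = (\Sigma_{\gamma'}, c)$ together with $\gamma - \gamma'$ extra marked points arranged into $(\gamma - \gamma')/2$ pairs, and glue each pair by a cylinder. Non-orientability is inherited from $\Sigma_{\gamma'}$ and the Euler characteristic is $(1 - \gamma') - (\gamma - \gamma') = 1 - \gamma$, identifying the result with $\Sigma_\gamma$; all collapsing geodesics are $2$-sided since all attachments are cylinders. The main subtlety throughout is the orientability bookkeeping: in (i) every gluing must preserve orientation, in (ii) at least one must reverse it to force non-orientability of a surface glued over $\mathbb{S}^2$, and in (iii) non-orientability comes for free from the base $\Sigma_{\gamma'}$. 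Once this is controlled, the classification of closed surfaces together with the Euler characteristic count uniquely identifies the glued surface, and Remark~\ref{conf&conv remark} supplies the degenerating sequence.
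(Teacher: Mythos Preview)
Your proof is correct and takes a genuinely different route from the paper. The paper works on the orientable double cover and exhibits explicit $\tau$-invariant pants decompositions (with pictures): in (ii) all geodesics come in $\tau$-exchanged pairs, so all are $2$-sided; in (iii) a specific decomposition is drawn in which only one geodesic is $\tau$-fixed (hence $1$-sided) and it is precisely the one that is \emph{not} pinched. You instead reverse the Deligne--Mumford construction: prescribe $\widehat{\Sigma_\infty}$ with paired marked points, build $M$ by gluing cylinders, identify $M$ via Euler characteristic plus the classification of surfaces, and appeal to Remark~\ref{conf&conv remark} for the degenerating sequence. This yields even simpler limiting spaces than the paper's (a single sphere in (i) and (ii); just $\Sigma_{\gamma'}$ with no extra sphere components in (iii)), which is perfectly adequate for the statement.

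Two points are worth making explicit in your write-up. First, the reason every collapsing geodesic is $2$-sided in (ii) and (iii) is that whenever one identifies two boundary circles of a surface with boundary, the resulting curve automatically has a cylindrical tubular neighbourhood (the union of the two boundary collars), regardless of whether the gluing homeomorphism preserves or reverses orientation; this is the real content behind your phrase ``every attached neighborhood is a genuine cylinder''. Second, Remark~\ref{conf&conv remark} is formulated in the hyperbolic setting, so one should check that the punctured limiting surface has negative Euler characteristic in each case; you do this implicitly by handling $\widetilde\gamma=1$ and $\gamma=1$ separately via Section~\ref{non-neg}, and in the remaining cases $\chi(\widehat{\Sigma_\infty}\setminus\{\text{marked points}\}) = \chi(M) < 0$, so the hyperbolic construction goes through (including the case $\gamma'=0$, since $\mathbb{RP}^2$ with at least two punctures is hyperbolic). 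With these two remarks spelled out, your argument is complete and arguably cleaner than the figure-based proof in the paper, at the cost of being less visual.
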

\begin{proof} 
From the discussion in Section~\ref{non-neg} this lemma is obvious in the non-negative Euler characteristic. In the remainder of the proof we focus on hyperbolic surfaces. 

Consider a hyperbolic orientable surface $(\widetilde\Sigma_\gamma,h)$ of genus $\gamma$. Given a pants decomposition $\mathcal P$ of $(\widetilde\Sigma_\gamma,h)$ (see e.g. Figure~$1$), one can construct a new hyperbolic metric $h_\varepsilon$ by replacing all pants in $\mathcal P$ by pants whose boundaries are scaled by $\varepsilon$. Sending $\varepsilon$ to $0$ gives the required sequence.

To show (ii) we refer to Figure~$1$. It pictures a particular pants decomposition of the orientable double cover with the involution given by a reflection with respect to the center point. We see that the involution exchanges pairs of geodesics, i.e. all geodesics in the pants decomposition are $2$-sided. Sending their lengths to $0$ provides the required sequence. 

\begin{figure}
  \centering
  \def\svgwidth{\columnwidth}
  \input{A.tex}
  \begin{minipage}{127mm}
    \footnotesize
    \emph{
    Figure 1: involution-invariant pants decomposition for an orientable double cover of a non-orientable surface of odd genus. The involution is given by the reflection with respect to the center point. Sending the lengths of all geodesics in the decomposition to zero provides the sequence required to prove (ii).
    }
   \end{minipage}
\end{figure}

To show (iii) we refer to Figure~$2$. Once again, it pictures a particular pants decomposition of the orientable double cover with the involution given by a reflection with respect to the center point. The numbers on the bottom refer to the number of handles in the marked interval. The only $1$-sided geodesic is the one corresponding to the central blue geodesic, i.e. all red geodesics project onto $2$-sided geodesics. Sending the lengths of all {\em red} geodesics in the the decomposition to zero provides the sequence satisfying topological requirements of (iii). Moreover, by Remark~\ref{conf&conv remark} any conformal class on the limiting space can be achieved, therefore, the proof of the lemma is complete.

\begin{figure}
  \centering
  \def\svgwidth{\columnwidth}
  \input{B.tex}
  \begin{minipage}{127mm}
    \footnotesize
    \emph{
	Figure 2: involution-invariant pants decomposition for an orientable double cover of a non-orientable surface of even genus. The involution is given by the reflection with respect to the center point. Sending lengths of all {\em red} geodesics in the decomposition to zero provides the sequence required to prove (iii).
    }
   \end{minipage}
\end{figure}
\end{proof}

\section{Proof of Theorem \ref{disproof}}
\label{main theorem proof}

\subsection{Case (i)} Let $\widetilde\Sigma_{\widetilde\gamma}$ be an orientable surface of genus $\widetilde\gamma$. By Lemma~\ref{metric sequences} there exists a sequence of conformal classes $c_n$ such that the limiting space $\widehat{\Sigma_\infty}$ is a union of spheres. Since in the orientable case all geodesics are $2$-sided, Theorem~\ref{conf&conv} implies
$$
\lim_{n\to\infty}\Lambda_k(\widetilde\Sigma_\gamma,c_n) = \max_{\sum k_j=k}\sum\Lambda_{k_j}(\mathbb{S}^2).
$$
We recall that by results of~\cite{karpukhin2017isoperimetric} one has $\Lambda_k(\mathbb{S}^2) = 8\pi k$. Therefore,
$$
\widetilde I_k(\widetilde\gamma)\leqslant \lim_{n\to\infty}\Lambda_k(\widetilde\Sigma_{\widetilde\gamma},c_n) = 8\pi k.
$$
At the same time, by~\eqref{el soufi} one has $\widetilde I_k(\widetilde\gamma)\geqslant 8\pi k.$

\subsection{Case (ii)} Let $\Sigma_\gamma$ be a non-orientable surface of odd genus $\gamma$. By Lemma~\ref{metric sequences} there exists a sequence of conformal classes $c_n$ such that all collapsing geodesics are $2$-sided and the limiting space of $\widehat{\Sigma_\infty}$ is a union of spheres. Then the same argument as in Case (i) yields $I_k(\gamma) = 8\pi k.$

\subsection{Case (iii)} Let $\Sigma_\gamma$ be a non-orientable surface of even genus $\gamma$. 
By Corollary~\ref{topology_even_genus}, for any degenerate sequence $c_n$ of conformal classes on $\Sigma_\gamma$ either the limiting space $\widehat{\Sigma_{\infty}}$ contains non-orientable components of even genus or there exist $1$-sided collapsing geodesics.
We denote by $\Sigma_{\gamma'_i}$ the non-orientable components of $\widehat{\Sigma_{\infty}}$ of even genus $\gamma'_i$ as well as projective planes with $\gamma'_i=0$ for each collapsing $1$-sided geodesic. Let $M_j$ be the remaining components (orientable or non-orientable of odd genus). Then, Theorem~\ref{conf&conv} yields
$$
I_k(\gamma)\leqslant\lim_{n\to\infty}\Lambda_k(\Sigma_\gamma,c_n) = \max\left(\sum\Lambda_{k_j}(M_j,c_\infty) + \sum\Lambda_{k'_j}(\Sigma_{\gamma'_i},c_\infty) \right).
$$  

By Remark~\ref{conf&conv remark} one has that the conformal classes on the right hand side of the previous inequality range over all possible combinations of conformal classes on connected components of the limiting space. Therefore, taking the infimum over all possible degenerating sequences $\{c_n\}$ yields
\begin{equation}
\label{caseiiia}
I_k(\gamma)\leqslant \min_{\{\gamma'_j\},\,\{M_j\}}\max\left(\sum I_{k_j}(M_j) + \sum I_{k'_j}(\gamma'_j)\right),
\end{equation}
where the minimum is taken over all possible topological types of the limiting space. Let $K'$ be the set of indices $k'_j$ and $|K'|$ denote the sum of all $k_j'$. Similarly, let $\Gamma'$ be the set of genera $\gamma'_j$ and $|\Gamma'|$ be the sum of $\gamma'_j.$
Taking into account cases (i) and (ii) proved above, inequality~\eqref{caseiiia} implies
\begin{equation}
\label{caseiiib}
I_k(\gamma)\leqslant\min_{\Gamma'}\max_{K',\,|K'|\leqslant k}\left(\sum I_{k'_j}(\gamma'_j)+8\pi(k-|K'|)\right),
\end{equation}
where the minimum is taken over all possible $\Gamma'$ the limiting space can have.
Lemma~\ref{metric sequences} implies that for all even $\gamma'<\gamma$ the sets $\Gamma' = \{\gamma'\}$ are possible. We claim that the minimum is actually attained on these one element sets.
Indeed, assume $\Gamma'$ contains two elements $\gamma'_1$ and $\gamma'_2$, then by inequality~\eqref{el soufi} for any $k_1,\,k_2$ one has $I_{k_1}(\gamma'_1) + I_{k_2}(\gamma'_2)\geqslant I_{k_1}(\gamma'_1) + 8\pi k_2$.  
Thus, inequality~\eqref{caseiiib} becomes
$$
I_k(\gamma)\leqslant \min_{\gamma'<\gamma}\max_{k'\leqslant k} (I_{k'}(\gamma') + 8\pi(k-k')).
$$
Furthermore, by inequality~\eqref{el soufi} the maximum is achieved when $k'=k$. Therefore,
$$
I_k(\gamma) = \min_{\gamma'<\gamma} I_k(\gamma').
$$
Finally, since this inequality holds for all $\gamma$ it is equivalent to having $I_k(\gamma)\leqslant I_k(\gamma-2)$ for all even $\gamma>0$.

If inequality~\eqref{even_ineq} is strict, then the minimizing sequence of conformal classes can not be degenerate. Therefore, it has to have a genuine conformal class on $\Sigma_\gamma$ as an accumulation point. By continuity of $\Lambda_k(\Sigma_\gamma,c)$ in $c$, see~\cite{cianci2019branched}, one has $I_k(\gamma)=\Lambda_k(\Sigma_\gamma,c)$.

\subsection{Proof of Corollary \ref{main corollary}}
\label{main corollary proof}

We start with the following proposition.

\begin{proposition}
\label{petrides_cor}
Let $(M,g)$ be a closed Riemannian surface not diffeomorphic to the sphere $\mathbb{S}^2$. Then one has
$$
\Lambda_k(M,[g]) > \Lambda_k(\mathbb{S}^2)=8\pi k.
$$
\end{proposition}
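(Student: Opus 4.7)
The strategy is a straightforward induction on $k$, combining Petrides' rigidity theorem (Theorem~\ref{roman}) with the Colbois--El Soufi inequality~\eqref{colbois}.

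For the base case $k=1$, I would invoke Theorem~\ref{roman} directly. The only subtlety is that Theorem~\ref{roman} requires $(M,g)$ to be not conformally diffeomorphic to $(\mathbb{S}^2,g_{can})$, whereas our hypothesis is merely that $M$ is not diffeomorphic to $\mathbb{S}^2$. But these conditions coincide for closed surfaces: by the uniformization theorem the moduli space of conformal classes on $\mathbb{S}^2$ consists of a single point, so any metric conformally diffeomorphic to $(\mathbb{S}^2, g_{can})$ must live on a surface diffeomorphic to $\mathbb{S}^2$. Hence Theorem~\ref{roman} applies and yields
$$
\Lambda_1(M,[g]) > \Lambda_1(\mathbb{S}^2,[g_{can}]) = 8\pi.
$$

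For the inductive step, assume $\Lambda_{k-1}(M,[g]) > 8\pi(k-1)$. Applying inequality~\eqref{colbois} with $n=2$ gives
$$
\Lambda_k(M,[g]) \;\geqslant\; \Lambda_{k-1}(M,[g]) + \Lambda_1(\mathbb{S}^2,[g_{can}]) \;>\; 8\pi(k-1) + 8\pi \;=\; 8\pi k,
$$
which completes the induction. Note that strictness propagates automatically through the induction because~\eqref{colbois} adds the full quantity $\Lambda_1(\mathbb{S}^2,[g_{can}]) = 8\pi$, matching the increment $\Lambda_k(\mathbb{S}^2) - \Lambda_{k-1}(\mathbb{S}^2) = 8\pi$ exactly.

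There is no real obstacle here: the hard analytic work has already been done in Petrides' theorem (the base case), and~\eqref{colbois} is precisely the input needed to bootstrap higher eigenvalues. The only point requiring a line of justification is the passage from ``not diffeomorphic to $\mathbb{S}^2$'' to ``not conformally diffeomorphic to $(\mathbb{S}^2,g_{can})$'', which is immediate from uniformization.
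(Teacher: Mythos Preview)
Your proof is correct and follows essentially the same approach as the paper: both combine Theorem~\ref{roman} for the strict inequality at $k=1$ with repeated application of inequality~\eqref{colbois} to reach arbitrary $k$. The paper writes this as the single telescoped inequality $\Lambda_k(M,[g]) \geqslant \Lambda_1(M,[g]) + 8\pi(k-1) > 8\pi k$, while you present it as an induction, but the content is identical.
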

\begin{proof}
It is proved in~\cite{karpukhin2017isoperimetric} that $\Lambda_k(\mathbb{S}^2)=8\pi k$.
Then, a combination of Theorem~\ref{roman} and inequality~\eqref{colbois} yields
$$
\Lambda_k(M,[g])\geqslant \Lambda_1(M,[g]) + 8\pi(k-1)>8\pi k = \Lambda_k(\mathbb{S}^2).
$$
\end{proof}

Now we are ready to prove the inequality $I_k(\gamma)>8\pi k$. Assume the contrary. Since $I_k(\gamma)\geqslant 8\pi k$, it implies that $I_k(\gamma)=8\pi k$. At the same time $I_k(0)>8\pi k$. Therefore, there exists an even $\gamma'$, $2\leqslant\gamma'\leqslant \gamma$ such that $I_k(\gamma)=\ldots =I_k(\gamma')< I_k(\gamma'-2)$. As a result, there exists a conformal class $c$ on $\Sigma_{\gamma'}$ such that $I_k(\gamma)=I_k(\gamma')=\Lambda_k(\Sigma_{\gamma'},c)>8\pi k$ by Proposition~\ref{petrides_cor}.

\section{Neumann eigenvalues}\label{analysis}

In this section we recall some results on conformal Neumann eigenvalues. The results of the present section are used repeatedly in Section~\ref{appendix}.
\subsection{Convergence of Neumann spectrum}

\begin{lemma}\label{Neumann conv}
Let $(M, g)$ be a closed compact Riemannian manifold. Consider a finite collection $\{ B_\epsilon(p_i) \}_{i=1}^l$ of geodesic balls of radius $\epsilon$ centred at some points $p_1,\ldots,p_l \in M$. Then for all $k\geqslant 0$ the Neumann eigenvalues $\lambda^N_i(M \setminus \cup^l_{i=1}B_\epsilon(p_i), g)$ converge to the eigenvalues $\lambda_i(M,g)$ as $\epsilon \to 0$.

\end{lemma}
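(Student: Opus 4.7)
The plan is to prove convergence via the variational characterization, establishing matching upper and lower bounds. Set $\Omega_\epsilon = M \setminus \cup_{i=1}^l B_\epsilon(p_i)$ and denote by $u_0,u_1,\ldots$ an $L^2(M)$-orthonormal system of eigenfunctions of $\Delta_g$ on $M$ with eigenvalues $\lambda_k(M,g)$.

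First, I would establish the upper bound $\limsup_{\epsilon\to 0}\lambda^N_k(\Omega_\epsilon,g)\leqslant \lambda_k(M,g)$ by feeding restrictions of $M$-eigenfunctions into the min-max on $\Omega_\epsilon$. Since $u_0,\ldots,u_k$ are smooth on $M$ and orthonormal in $L^2(M)$, their restrictions to $\Omega_\epsilon$ span a $(k+1)$-dimensional subspace $V_\epsilon\subset H^1(\Omega_\epsilon)$ for all sufficiently small $\epsilon$ (the Gram matrix tends to the identity since $\mathrm{Vol}(\cup B_\epsilon(p_i))\to 0$). For any $v=\sum c_j u_j$ with $\sum c_j^2=1$ one has $\int_{\Omega_\epsilon}|\nabla v|^2\leqslant\int_M|\nabla v|^2\leqslant\lambda_k(M,g)$, while $\int_{\Omega_\epsilon}v^2=1-o(1)$ because $v^2$ is bounded on $M$ and the excised balls have vanishing volume. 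The min-max principle yields the upper bound.

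Next, for the reverse inequality $\liminf_{\epsilon\to 0}\lambda^N_k(\Omega_\epsilon,g)\geqslant\lambda_k(M,g)$, I would take $L^2(\Omega_\epsilon)$-orthonormal Neumann eigenfunctions $u_0^\epsilon,\ldots,u_k^\epsilon$ with eigenvalues $\mu_j^\epsilon=\lambda^N_j(\Omega_\epsilon,g)$, which are uniformly bounded by the upper bound just proved. Hence $\|u_j^\epsilon\|_{H^1(\Omega_\epsilon)}$ is bounded. The key step is to construct extensions $\tilde u_j^\epsilon\in H^1(M)$ of $u_j^\epsilon$ whose extra energy $\int_{\cup B_\epsilon(p_i)}(|\nabla \tilde u_j^\epsilon|^2+(\tilde u_j^\epsilon)^2)$ tends to zero as $\epsilon\to 0$. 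I would use the harmonic extension inside each ball $B_\epsilon(p_i)$ with boundary values $u_j^\epsilon|_{\partial B_\epsilon(p_i)}$. By Dirichlet's principle, its energy is bounded above by the energy of the product $\eta_\epsilon \,u_j^\epsilon$, where $\eta_\epsilon$ is a capacity-zero cutoff on a concentric annulus $B_{R}(p_i)\setminus B_\epsilon(p_i)$ equal to $1$ on $\partial B_\epsilon(p_i)$ and to $0$ on $\partial B_R(p_i)$ (for fixed small $R$) and satisfying $\int_M|\nabla\eta_\epsilon|^2\to 0$. Such $\eta_\epsilon$ exist because single points have zero $H^1$-capacity in dimension $n\geqslant 2$ (logarithmic cutoff when $n=2$, polynomial otherwise). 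Since $u_j^\epsilon$ has bounded $H^1$ norm on that annulus, the product rule gives $\int_{B_R(p_i)}|\nabla(\eta_\epsilon u_j^\epsilon)|^2\to 0$, hence $\int_{B_\epsilon(p_i)}|\nabla\tilde u_j^\epsilon|^2\to 0$; the $L^2$ part is controlled similarly by the maximum principle and the vanishing volume.

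Finally, I would extract a subsequence along which $\tilde u_j^\epsilon\to u_j$ weakly in $H^1(M)$ and strongly in $L^2(M)$, for each $j=0,\ldots,k$. The limits $\{u_j\}$ remain $L^2(M)$-orthonormal (since the mass on $\cup B_\epsilon(p_i)$ is negligible) and satisfy $\int_M|\nabla u_j|^2\leqslant\liminf \mu_j^\epsilon$ by lower semicontinuity. Feeding $\mathrm{span}\{u_0,\ldots,u_k\}$ into the min-max on $M$ gives $\lambda_k(M,g)\leqslant\liminf \mu_k^\epsilon$, as desired. The chief technical obstacle is the extension estimate of the second paragraph; it is exactly here that the assumption $n\geqslant 2$ is used, through the vanishing $H^1$-capacity of points, ensuring that removing small balls does not change the limiting variational problem.
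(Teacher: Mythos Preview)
The paper itself does not prove this lemma; it simply refers to \cite[Theorem~2]{anne1987spectre}. Your variational outline is the standard one, and the upper bound step is correct. The lower bound, however, has a real gap at the extension estimate.

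You claim that the Dirichlet energy of the harmonic extension $h_\epsilon$ into $B_\epsilon(p_i)$ is bounded by that of $\eta_\epsilon u_j^\epsilon$ via Dirichlet's principle. But Dirichlet's principle compares $h_\epsilon$ only with competitors defined on the \emph{ball} $B_\epsilon$ having the same trace on $\partial B_\epsilon$; the product $\eta_\epsilon u_j^\epsilon$ lives on the annulus $B_R\setminus B_\epsilon$, so no such comparison is available. Even granting some competitor, your assertion $\int_{B_R}|\nabla(\eta_\epsilon u_j^\epsilon)|^2\to 0$ is unjustified: expanding by the product rule produces $\int \eta_\epsilon^2|\nabla u_j^\epsilon|^2$, and since $\eta_\epsilon$ is supported on a \emph{fixed} annulus, this term is only bounded by the total energy $\mu_j^\epsilon\leqslant C$; there is no reason for it to vanish, because $u_j^\epsilon$ varies with $\epsilon$ and its energy may concentrate near $p_i$. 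The cross term $\int|\nabla\eta_\epsilon|^2(u_j^\epsilon)^2$ is also not controlled by ``bounded $H^1$ norm'' alone, since $u_j^\epsilon$ is not uniformly in $L^\infty$. The zero-capacity heuristic is the right intuition, but it does not translate into this estimate.

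A clean repair is to drop the requirement that the extension energy vanish and instead use only a uniformly bounded extension (e.g.\ radial reflection across $\partial B_\epsilon(p_i)$ in normal coordinates, whose operator norm $H^1(\Omega_\epsilon)\to H^1(M)$ is bounded independently of $\epsilon$). Then $\tilde u_j^\epsilon$ is bounded in $H^1(M)$; along a subsequence $\tilde u_j^\epsilon\rightharpoonup v_j$ in $H^1$ and $\to v_j$ in $L^2$. For any $\phi\in C^\infty(M)$, the Neumann condition gives $\int_{\Omega_\epsilon}\nabla u_j^\epsilon\cdot\nabla\phi=\mu_j^\epsilon\int_{\Omega_\epsilon}u_j^\epsilon\phi$, while the contribution on $\cup B_\epsilon$ is $O(\|\nabla\phi\|_{L^2(\cup B_\epsilon)})=o(1)$; passing to the limit shows each $v_j$ is a genuine eigenfunction on $M$ with eigenvalue $\lim\mu_j^\epsilon$. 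Sobolev embedding on $M$ (applied to the extensions) forces $\|\tilde u_j^\epsilon\|_{L^2(\cup B_\epsilon)}\to 0$, so the $v_j$ are $L^2(M)$-orthonormal, and min--max yields $\lambda_k(M,g)\leqslant\liminf\mu_k^\epsilon$. This is essentially the argument behind the reference the authors cite.
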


For the proof we refer the reader to the paper \cite[Theorem 2]{anne1987spectre}. 


Next, we recall the following statement.

\begin{proposition}\label{N-cont}
Let $M$ be a closed $n$-dimensional manifold and $\Omega\subset M$ be a smooth domain. Assume the sequence of Riemannian metrics $g_m$ on $M$ converges in $C^\infty$-topology to the metric $g$. Then $\Lambda_k(M,[g_m])\to\Lambda_k(M,[g])$. Similarly, if $h_m|_{\overline\Omega}$ converge to $g|_{\overline\Omega}$ in $C^\infty$-topology, then $\Lambda^N_k(\Omega,[h_m|_{\overline\Omega}])\to\Lambda^N_k(\Omega,[g|_{\overline\Omega}])$. 
\end{proposition}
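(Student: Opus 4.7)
The plan is to reduce the statement to an elementary pointwise comparison of metrics. Since $M$ is closed and $g_m \to g$ in $C^\infty$ (and in particular uniformly), there exist constants $a_m, b_m \to 1$ such that $a_m g \leqslant g_m \leqslant b_m g$ as symmetric bilinear forms on $M$. Multiplying both sides by any smooth positive function $f$ preserves this inequality, so $a_m (fg) \leqslant f g_m \leqslant b_m (fg)$ holds for every conformal representative of $[g]$ simultaneously, with the \emph{same} constants $a_m, b_m$ independent of $f$.

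The second step is to establish the elementary bound that whenever $a g \leqslant g' \leqslant b g$ for metrics on a closed $n$-manifold, one has
\[
(b/a)^{-(n+2)/2}\,\overline\lambda_k(M, g) \leqslant \overline\lambda_k(M, g') \leqslant (b/a)^{(n+2)/2}\,\overline\lambda_k(M, g).
\]
This follows from the min-max characterization of $\lambda_k$ together with the induced pointwise inequalities $b^{-1} g^{ij} \leqslant (g')^{ij} \leqslant a^{-1} g^{ij}$ and $a^{n/2}\,dv_g \leqslant dv_{g'} \leqslant b^{n/2}\,dv_g$, and direct bookkeeping of the volume factor $V^{2/n}$. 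Applying this bound with $g' = f g_m$, $g$ replaced by $fg$, and $a=a_m$, $b=b_m$, then taking the supremum over smooth $f>0$, yields
\[
(b_m/a_m)^{-(n+2)/2}\,\Lambda_k(M, [g]) \leqslant \Lambda_k(M, [g_m]) \leqslant (b_m/a_m)^{(n+2)/2}\,\Lambda_k(M, [g]).
\]
Sending $m \to \infty$ and using $b_m/a_m \to 1$ gives the desired convergence $\Lambda_k(M, [g_m]) \to \Lambda_k(M, [g])$.

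The Neumann statement proceeds identically. The hypothesis $h_m|_{\overline\Omega} \to g|_{\overline\Omega}$ in $C^\infty$ provides two-sided comparability on the compact set $\overline\Omega$ with constants tending to $1$, and the Neumann Rayleigh quotient over $H^1(\Omega)$ has exactly the same formal structure as its closed-manifold counterpart, so the comparison inequality and the supremum step carry over verbatim. The only real point of care is the exponent bookkeeping in step two: one must verify that the resulting comparison constant depends solely on the metric ratio $b/a$ and \emph{not} on the conformal factor. Once this is in place, no deeper tool (neither the Korevaar--Grigor'yan--Yau estimate, nor a concentration/bubbling analysis, nor weak-$*$ compactness of measures) is required.
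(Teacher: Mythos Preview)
Your proposal is correct and follows essentially the same route as the paper: both arguments use the uniform two-sided comparison $a_m g \leqslant g_m \leqslant b_m g$ (the paper writes it as $(1+\varepsilon)^{-2} g_m \leqslant g \leqslant (1+\varepsilon)^2 g_m$), observe that the constants are independent of the conformal factor $f$, deduce a comparison of $\overline\lambda_k$ with an exponent depending only on the ratio and the dimension, and then take the supremum over $f$. The only cosmetic difference is that the paper quotes Dodziuk's comparison \cite[Proposition~3.3]{MR656119} for the eigenvalue step, whereas you sketch it directly from the min--max formula, and the resulting exponents differ slightly but immaterially.
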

\begin{proof}
We show the statement for closed manifolds. The case of domains is treated in the exactly same way.

Let $\varepsilon>0$. Then for large enough $m$ one has 
$$
\frac{1}{(1+\varepsilon)^2} f g_m(v,v) \leqslant f g(v,v) \leqslant (1+\varepsilon)^2 f g_m(v,v),\quad \forall v \in \Gamma(TM\setminus\{0\}),
$$
where $f$ is any positive smooth function on $M$.
Then by~\cite[Proposition 3.3]{MR656119} one has
$$
\frac{1}{(1+\varepsilon)^{2(n-1)}}\lambda_k(M,fg_m)\leqslant\lambda_k(M,fg)\leqslant (1+\varepsilon)^{2(n-1)}\lambda_k(M,fg_m).
$$
At the same time 
$$
\frac{1}{(1+\varepsilon)^{n}}\Vol(M,fg_m)\leqslant\Vol(M,fg)\leqslant (1+\varepsilon)^{n}\Vol(M,fg_m).
$$
As a result,
$$
\frac{1}{(1+\varepsilon)^{2n}}\bar\lambda_k(M,fg_m)\leqslant\bar\lambda_k(M,fg)\leqslant (1+\varepsilon)^{2n}\bar\lambda_k(M,fg_m).
$$
Taking the supremum over all $f$ yields
$$
\frac{1}{(1+\varepsilon)^{2n}}\Lambda_k(M,[g_m])\leqslant\Lambda_k(M,[g])\leqslant (1+\varepsilon)^{2n}\Lambda_k(M,[g_m]).
$$
Since it holds for any $\varepsilon>0$ the proof is complete.
\end{proof}
\subsection{Discontinuous metrics}\label{main lemma proof}

Let $(M,g)$ be a closed Riemannian manifold of dimension $n$.
Consider a set of pairwise disjoint smooth domains $\{\Omega_i\}^s_{i=1}$ in $M$ such that $M=\bigcup^s_{i=1} \overline\Omega_i$. 
Let us consider a class of discontinuous metrics on $M$ defined as $\rho g\in[g]$, where $\rho|_{\Omega_i} = \rho_i\in C^\infty(\overline\Omega_i)$ are positive.
The space of such functions $\rho$ will be denoted as $C^{\infty}_+(M,\{\Omega_i\})$. If we do not require the components to be positive, we omit the subscript $+$.

The metric $\rho g$ is not smooth. The spectrum of the Laplacian $\Delta_{\rho g}$ is defined as the set of critical values of the Rayleigh quotient
$$
R_{\rho g}[\varphi]=\frac{\displaystyle\int_M \rho^{\frac{n-2}{2}} | \nabla_g \varphi|^2_g dv_g}{\displaystyle\int_M \rho^{\frac{n}{2}}  \varphi^2 dv_g}.
$$ 
Let $n_i$ be outward pointing normal vector for $(\overline\Omega_i,\rho_i g)$. Then an eigenfunction $u$ corresponding to the eigenvalue $\lambda$ is continuous across $\partial\Omega$ and satisfies the following system
\begin{align*}
 \begin{cases}
 \Delta_{\rho g} u = \lambda u \quad &\mathrm{on}\quad \bigcup\Omega_i,\\
\rho_i^\frac{n-1}{2}\dfrac{\partial u}{\partial n_i} + \rho_j^{\frac{n-1}{2}}\dfrac{\partial u}{\partial n_j} = 0\quad&\mathrm{on}\quad\overline\Omega_i\cap\overline\Omega_j.
 \end{cases}
\end{align*}
Let $C_b(M,\{\Omega_i\})\subset C^0(M)$ be a subspace of $C^{\infty}(M,\{\Omega_i\})$ consisting of functions $v$ satisfying the above boundary condition for eigenfunctions. Then 
$$
\lambda_k(M,\rho g) = \inf_{E_{k+1}} \sup_{\varphi\in E_{k+1}} R_{\rho g}[\varphi],
$$
where $E_{k+1}$ ranges over all $(k+1)$-dimensional subspaces of $C_b(M,\{\Omega_i\})$.

Let us introduce the following notation
\begin{align*}
\Lambda_k(M,\{\Omega_i\},[g])=\sup \{ \bar\lambda_k(\rho g)~\vert~ \rho \in  C^{\infty}_+(M,\{\Omega_i\})\},
\end{align*}
where $\bar\lambda_k(\rho g)$ is the normalized $k$-th eigenvalue given by
$$
\bar\lambda_k(\rho g) = \lambda_k(\rho g) ||\rho||_{L^\frac{n}{2}(M,g)}.
$$


\begin{lemma}\label{identity}
Let $(M,g)$ be a Riemannian manifold of dimension $n$. Consider a set of pairwise disjoint smooth domains $\{\Omega_i\}^s_{i=1}$ in $M$ such that $M=\bigcup^s_{i=1} \overline\Omega_i$. Then one has
\begin{align*}
\Lambda_k(M,\{\Omega_i\},[g])=\Lambda_k(M,[g])
\end{align*}
\end{lemma}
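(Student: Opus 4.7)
The inequality $\Lambda_k(M,[g])\leqslant \Lambda_k(M,\{\Omega_i\},[g])$ is immediate because every smooth positive conformal factor on $M$ already lies in $C^\infty_+(M,\{\Omega_i\})$. The content of the lemma is therefore the reverse inequality, and my plan is to approximate an arbitrary $\rho\in C^\infty_+(M,\{\Omega_i\})$ by a sequence of genuinely smooth positive conformal factors $\rho_\varepsilon\in C^\infty(M)$ for which $\bar\lambda_k(\rho_\varepsilon g)\to\bar\lambda_k(\rho g)$ as $\varepsilon\to 0$.

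For the construction, I would fix a tubular $\varepsilon$-neighbourhood $U_\varepsilon$ of $\bigcup_i\partial\Omega_i$, set $\rho_\varepsilon=\rho$ on $M\setminus U_\varepsilon$, and interpolate smoothly across $U_\varepsilon$ (for instance via a mollifier or a partition-of-unity construction) while keeping $0<c\leqslant \rho_\varepsilon\leqslant C$ uniformly in $\varepsilon$. Since $\Vol_g(U_\varepsilon)\to 0$ and $\rho_\varepsilon^{n/2}$ is uniformly bounded, the dominated convergence theorem immediately yields $\|\rho_\varepsilon\|_{L^{n/2}(M,g)}\to\|\rho\|_{L^{n/2}(M,g)}$. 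So the whole argument reduces to showing $\lambda_k(\rho_\varepsilon g)\to\lambda_k(\rho g)$.

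To handle the eigenvalue convergence I would first put both problems on the common Sobolev space $H^1(M)$. The interface relation $\rho_i^{(n-1)/2}\partial_{n_i}u+\rho_j^{(n-1)/2}\partial_{n_j}u=0$ on $\overline{\Omega}_i\cap\overline{\Omega}_j$ is exactly the natural (as opposed to essential) boundary condition associated to the Rayleigh quotient $R_{\rho g}$, so the min-max description of $\lambda_k(\rho g)$ already ranges over \emph{all} $(k+1)$-dimensional subspaces of $H^1(M)$ with no matching constraint imposed on test functions. On this common space, the upper bound $\limsup_\varepsilon \lambda_k(\rho_\varepsilon g)\leqslant \lambda_k(\rho g)$ follows from min-max by inserting the span of the first $k+1$ eigenfunctions of $\rho g$ as test functions for the $\rho_\varepsilon g$-problem; since $\rho_\varepsilon\to\rho$ pointwise almost everywhere with uniform bounds, dominated convergence applied to $\rho_\varepsilon^{(n-2)/2}|\nabla\varphi|_g^2$ and $\rho_\varepsilon^{n/2}\varphi^2$ shows that $R_{\rho_\varepsilon g}[\varphi]\to R_{\rho g}[\varphi]$ uniformly on this fixed finite-dimensional subspace. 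For the matching lower bound, I would extract normalised eigenfunctions $u_j^\varepsilon$ of $\rho_\varepsilon g$, use the uniform control of their Rayleigh quotients to obtain uniform $H^1$-bounds, and pass to weak $H^1$-limits which, after the Rellich–Kondrachov step, turn out to be orthonormal with respect to $\langle\cdot,\cdot\rangle_\rho$ and to realise the corresponding eigenvalues of $\rho g$.

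The main technical obstacle is this last compactness step: one has to verify that the weak $H^1$-limits of the $u_j^\varepsilon$ remain orthonormal in the $\rho$-weighted inner product and minimise the right Rayleigh quotient in the limit. This calls for upgrading weak $L^2$ to strong $L^2$-convergence via Rellich–Kondrachov and then using the uniform bounds on $\rho_\varepsilon$ together with dominated convergence to pass to the limit in both the Dirichlet and the weighted $L^2$-terms — a standard but slightly delicate manoeuvre from variational spectral theory. Combining the resulting identity $\lambda_k(\rho_\varepsilon g)\to \lambda_k(\rho g)$ with the volume convergence established above yields $\bar\lambda_k(\rho_\varepsilon g)\to\bar\lambda_k(\rho g)$, and hence the desired equality $\Lambda_k(M,\{\Omega_i\},[g])=\Lambda_k(M,[g])$.
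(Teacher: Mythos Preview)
Your approach is correct and is essentially the smoothing-approximation argument of Friedlander--Nadirashvili to which the paper defers; the paper's only addition for general $k$ is to replace the single orthogonality constraint in the variational characterisation of $\lambda_1$ by orthogonality to the first $k$ eigenspaces (the set $S$), which your min--max formulation on $H^1(M)$ handles automatically. One cosmetic remark: in the lower-bound step the Dirichlet term is controlled by weak lower semicontinuity of $\|\rho_\varepsilon^{(n-2)/4}\nabla u_j^\varepsilon\|_{L^2}$ rather than dominated convergence, but the conclusion is unaffected.
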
 
\begin{proof}
In the paper~\cite[Lemma 2]{MR1717641} this lemma is proved for $k=1$. The proof carries over to the case of arbitrary $k$. The only change is to redefine the set $S$ from the original proof to be

\begin{align*}
S=\{u\in H^1(M,g) \vert~ u \perp_{L^2(M, \rho g)} E_0,..., E_{k-1}, \int_M \rho^{\frac{n}{2}}u^2 dv_g =1\},
\end{align*}
where $E_k$ is the eigenspace corresponding to the $k$-th eigenvalue of the metric $\rho g$. We refer the reader to \cite{MR1717641} for details.

\end{proof}

\begin{lemma}
\label{identity2}
Let $(M,g)$ be a closed Riemannian manifold of dimension $n$. Consider a set of pairwise disjoint smooth domains $\{\Omega_i\}^s_{i=1}$ in $M$ such that $M=\bigcup^s_{i=1} \overline\Omega_i$. Let $(\Omega,h) = \sqcup(\overline\Omega_i,g|_{\overline\Omega_i})$. Then for all $k\geqslant 0$ one has
$$
\Lambda_k(M,[g]) \geqslant \Lambda^N_k(\Omega,[h]).
$$
If $(M,g)$ is compact with non-empty boundary with $(\Omega, h)$ as above, then 
$$
\Lambda^N_k(M,[g]) \geqslant \Lambda^N_k(\Omega,[h]).
$$
\end{lemma}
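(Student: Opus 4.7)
The plan is to use Lemma~\ref{identity} to replace $\Lambda_k(M,[g])$ by the supremum over discontinuous conformal factors adapted to the partition $\{\Omega_i\}$, and then to observe that any conformal factor for the Neumann problem on the disjoint union $\Omega$ is automatically an admissible discontinuous conformal factor on $M$. Concretely, I would fix $f\in C^\infty_+(\overline\Omega)$, i.e.\ positive $f_i\in C^\infty(\overline{\Omega_i})$, and define $\rho\in C^\infty_+(M,\{\Omega_i\})$ by $\rho|_{\Omega_i}=f_i$. Since the interface $\bigcup_{i\neq j}(\partial\Omega_i\cap\partial\Omega_j)$ has $dv_g$-measure zero in $M$, the volumes match: $\Vol(M,\rho g)=\sum_i\int_{\Omega_i}f_i^{n/2}\,dv_g=\Vol(\Omega,fh)$.

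The key step would be to compare the eigenvalues through their variational characterizations. The restriction map $H^1(M,g)\to H^1(\Omega,h)=\prod_i H^1(\Omega_i)$ is injective (again because the interface has measure zero) and preserves the Rayleigh quotient: for every $\varphi\in H^1(M,g)$,
\begin{align*}
R_{\rho g}[\varphi]=\frac{\int_M\rho^{(n-2)/2}|\nabla_g\varphi|^2\,dv_g}{\int_M\rho^{n/2}\varphi^2\,dv_g}=\frac{\int_\Omega f^{(n-2)/2}|\nabla_h\varphi|^2\,dv_h}{\int_\Omega f^{n/2}\varphi^2\,dv_h}=R^N_{fh}[\varphi|_\Omega].
\end{align*}
The Neumann min-max $\lambda^N_k(\Omega,fh)$ takes the infimum of $\sup R^N_{fh}$ over all $(k+1)$-dimensional subspaces of $H^1(\Omega,h)$, whereas the min-max for $\lambda_k(M,\rho g)$ only uses subspaces of the form $E|_\Omega$ with $E\subset C_b(M,\{\Omega_i\})$. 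Hence $\lambda^N_k(\Omega,fh)\leqslant\lambda_k(M,\rho g)$, and combined with the volume identity this gives $\bar\lambda^N_k(\Omega,fh)\leqslant\bar\lambda_k(M,\rho g)\leqslant\Lambda_k(M,\{\Omega_i\},[g])$. Lemma~\ref{identity} turns the right-hand side into $\Lambda_k(M,[g])$, and taking the supremum over $f$ completes the first assertion.

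For the second statement, with $\partial M\neq\emptyset$, I would repeat the same argument verbatim using the Neumann spectrum of $M$: the min-max for $\lambda^N_k(M,\rho g)$ is still taken over $H^1(M,g)$ (Neumann conditions on $\partial M$ being natural), and Lemma~\ref{identity} has an identical Neumann analogue proved in exactly the same way. The entire argument is essentially a restriction-of-test-functions trick, and the only step I expect to require any extra care is verifying that Lemma~\ref{identity} applies in the Neumann setting on manifolds with boundary --- which, however, should follow by rereading its proof with no modifications beyond reinterpreting the eigenvalues as Neumann ones.
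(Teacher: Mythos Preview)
Your proposal is correct and follows essentially the same approach as the paper: both view a conformal factor on the disjoint union as a discontinuous conformal factor on $M$, invoke the Dirichlet--Neumann bracketing inequality $\lambda^N_k(\Omega,fh)\leqslant\lambda_k(M,\rho g)$ coming from the inclusion $C_b(M,\{\Omega_i\})\subset H^1(\Omega,h)$, and then apply Lemma~\ref{identity}. Your remark that the Neumann analogue of Lemma~\ref{identity} must be checked in the boundary case is a fair point; the paper simply asserts that the proof ``remains the same whether $M$ has boundary or not.''
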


\begin{proof}
The proof is a combination of a classical Dirichlet-Neumann bracketing argument and Lemma~\ref{identity}. It remains the same whether $M$ has boundary or not. Below, we assume that $M$ is closed.

Let $h^m\in[h]$ be a maximizing sequence of metrics for $\Lambda^N_k(\Omega,[h])$. Let $g^m\in[g]$ be a discontinuous metric on $M$ defined as $g|_{\Omega_i} = h_i$. Since the space of test functions for the Neumann eigenvalues $C^\infty(M,\{\Omega_i\})$ is larger than $C_b(M,\{\Omega_i\})$, the variational characterization of eigenvalues implies that for all $k$ one has $\lambda_k(M,g^m)\geqslant\lambda^N(\Omega,h^m)$. Taking the limit and using the fact that $\Vol(\Omega,h^m)=\Vol(M,g^m)$ yields
$$
\Lambda_k(M,\{\Omega_i\},[g])\geqslant\Lambda_k^N(\Omega,[h]).
$$
An application of Lemma~\ref{identity} completes the proof.
\end{proof}

\subsection{Neumann spectrum of a subdomain.} 

The present section is devoted to the proof of Proposition~\ref{main lemma}. The idea is to introduce a conformal factor that vanishes outside $\Omega$. However, the conformal factors are not allowed to be equal to $0$. To circumvent this difficulty one has to go through an approximation procedure which is carried out below.

Let us first remind the statement of Proposition~\ref{main lemma}. We state it in a slightly more general way.
\begin{proposition}
\label{subdomain}
Let $(M,g)$ be a closed Riemannian manifold, $\Omega\subset M$ is a smooth subdomain. Then for all $k$ one has
$$
\Lambda_k(M,[g])\geqslant \Lambda^N_k(\Omega,[g|_{\overline\Omega}]).
$$
If $M$ is compact with non-empty boundary and $\Omega\subset M$ is a smooth domain, then for all $k$
$$
\Lambda^N_k(M,[g])\geqslant \Lambda^N_k(\Omega,[g|_{\overline\Omega}]).
$$
\end{proposition}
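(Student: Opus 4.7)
The plan is to realize the ``singular'' conformal factor vanishing outside $\Omega$ as a limit of genuine (discontinuous) conformal factors, using Lemma~\ref{identity} to stay within the framework of $\Lambda_k$. Fix $\rho \in C^\infty_+(\overline\Omega)$ that nearly attains $\Lambda^N_k(\Omega, [g|_{\overline\Omega}])$, set $\Omega' := \overline{M\setminus\Omega}$, and define $\rho_\epsilon \in C^\infty_+(M, \{\overline\Omega, \Omega'\})$ by $\rho_\epsilon|_{\overline\Omega} = \rho$ and $\rho_\epsilon|_{\Omega'} = \epsilon > 0$. By Lemma~\ref{identity} one has $\Lambda_k(M, [g]) \geqslant \overline\lambda_k(\rho_\epsilon g)$ for every $\epsilon$; the case when $M$ has nonempty boundary is handled by exactly the same argument with $\Lambda^N_k$ replacing $\Lambda_k$ throughout. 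Since $\Vol(M, \rho_\epsilon g) = \int_\Omega \rho^{n/2}\,dv_g + \epsilon^{n/2}\Vol(\Omega',g)$ converges to $\Vol(\Omega, \rho g)$, the desired statement reduces to the eigenvalue bound
$$\liminf_{\epsilon\to 0^+}\lambda_k(\rho_\epsilon g) \geqslant \lambda^N_k(\Omega,\rho g),$$
after which one takes $\epsilon\to 0$ and passes to the supremum over $\rho$.

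I would prove the eigenvalue bound via the minimax characterization together with weak compactness. Let $\varphi_0^\epsilon,\ldots,\varphi_k^\epsilon$ be the first $k+1$ eigenfunctions of $\rho_\epsilon g$, normalised so that $b_\epsilon(\varphi_i^\epsilon,\varphi_j^\epsilon):=\int_M \rho_\epsilon^{n/2}\varphi_i^\epsilon\varphi_j^\epsilon\,dv_g = \delta_{ij}$. Their $\rho^{(n-2)/2}$-weighted Dirichlet energies on $\Omega$ are controlled by $\lambda_k(\rho_\epsilon g)$, so the restrictions $\varphi_j^\epsilon|_{\overline\Omega}$ form a bounded sequence in $H^1(\Omega)$ and, up to a subsequence, converge weakly in $H^1(\Omega)$ and strongly in $L^2(\Omega)$ to limits $\varphi_j^\infty \in H^1(\Omega)$. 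The main obstacle is to ensure that the family $\{\varphi_j^\infty\}_{j=0}^k$ remains linearly independent in $L^2(\Omega, \rho^{n/2}\,dv_g)$: a priori, some nontrivial combination $u_\epsilon = \sum c_j\varphi_j^\epsilon$ could concentrate its $L^2(\rho_\epsilon)$-mass on $\Omega'$, causing the orthonormality relation to collapse in the limit.

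To rule out such concentration, I would exploit that the Rayleigh bound gives $\epsilon^{(n-2)/2}\int_{\Omega'}|\nabla u_\epsilon|^2\,dv_g \leqslant C$, while the trace of $u_\epsilon$ on $\partial\Omega$ viewed from the $\Omega$ side is bounded thanks to $\|u_\epsilon\|_{H^1(\Omega)}\leqslant C$. The Poincar\'e-trace inequality on $\Omega'$ then yields $\int_{\Omega'}u_\epsilon^2\,dv_g \leqslant C\bigl(\epsilon^{-(n-2)/2}+1\bigr)$, so $\epsilon^{n/2}\int_{\Omega'}u_\epsilon^2\,dv_g = O(\epsilon)\to 0$. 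A Cauchy--Schwarz argument then shows that the $\Omega'$-contributions to $b_\epsilon(\varphi_i^\epsilon,\varphi_j^\epsilon)$ vanish in the limit, giving $\int_\Omega \rho^{n/2}\varphi_i^\infty\varphi_j^\infty\,dv_g = \delta_{ij}$ and the sought-after linear independence on $\Omega$. Finally, lower semicontinuity of the $\rho^{(n-2)/2}$-weighted Dirichlet energy applied to combinations in $V_\epsilon:=\mathrm{span}\{\varphi_j^\epsilon\}$ converts the bound $\int_M \rho_\epsilon^{(n-2)/2}|\nabla u_\epsilon|^2 \leqslant \lambda_k(\rho_\epsilon g)\|c\|^2$ into $\int_\Omega\rho^{(n-2)/2}|\nabla u_\infty|^2\,dv_g \leqslant \bigl(\liminf_\epsilon\lambda_k(\rho_\epsilon g)\bigr)\int_\Omega\rho^{n/2}u_\infty^2\,dv_g$ for every $u_\infty$ in the $(k+1)$-dimensional space $V_\infty:=\mathrm{span}\{\varphi_j^\infty\}\subset H^1(\Omega)$. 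The minimax characterization of $\lambda^N_k(\Omega,\rho g)$ then closes the argument. The hard step is the non-concentration estimate; everything else is a standard weak-compactness plus minimax exercise.
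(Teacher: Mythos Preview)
Your proof is correct and shares the paper's overall framework: approximate the vanishing conformal factor by discontinuous positive ones $\rho_\epsilon$, invoke Lemma~\ref{identity} to stay inside $\Lambda_k(M,[g])$, reduce the statement to the eigenvalue bound $\liminf_{\epsilon\to 0}\lambda_k(\rho_\epsilon g)\geqslant\lambda^N_k(\Omega,\rho g)$, and then pass to the supremum over $\rho$.

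The difference is in how that eigenvalue bound (the paper's Lemma~\ref{liminf}) is established. The paper, following~\cite{enciso2015}, decomposes each $\varphi\in H^1(M)$ as the harmonic extension of $\varphi|_{\partial\Omega}$ into $\Omega^c$ plus an $H^1_0(\Omega^c)$ remainder, observes that these pieces are Dirichlet-orthogonal for every $\rho_\delta$, and from a chain of estimates (Dirichlet eigenvalue bound on the remainder, harmonic-extension control on the first piece) deduces a \emph{uniform quantitative} comparison
\[
R_{\rho_\delta g}[\varphi]\geqslant\frac{1}{(1+C\delta^{n/2})(1+C\sqrt\delta)}\,R^N_{(\Omega,g)}[\varphi|_\Omega]
\]
for every $\varphi$ in the span of the first $k+1$ eigenfunctions; the dimension of the restricted span is then preserved by unique continuation. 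You instead pass to weak $H^1(\Omega)$-limits of the eigenfunctions along a subsequence realising the liminf, use the Poincar\'e--trace inequality on $\Omega'$ directly to show $\epsilon^{n/2}\!\int_{\Omega'}(\varphi_j^\epsilon)^2\to 0$, and recover $(k+1)$-dimensionality from the limiting orthonormality relations rather than from unique continuation. Your route is softer (no explicit rate in $\epsilon$, subsequence extraction) but more self-contained: it avoids the harmonic decomposition and the unique continuation step entirely. Both arguments hinge on the same mechanism, namely that the $L^2(\rho_\epsilon g)$-mass on $\Omega'$ is $O(\epsilon)$ because the Dirichlet energy there is only $O(\epsilon^{-(n-2)/2})$ while the trace on $\partial\Omega$ stays bounded.
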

The proof of the boundary case is identical to the closed case. For that reason we only present the closed case below. 

We introduce the conformal factor $\rho_\delta$, so that $\rho_\delta|_{\Omega}\equiv 1$ and $\rho_\delta|_{M\setminus\Omega}\equiv\delta$.
\begin{lemma}\label{liminf}
 One has $$\liminf_{\delta \to 0}\lambda_k(\rho_\delta g)\geqslant \lambda^N_k(\Omega,g),$$ where $\lambda^N_k(\Omega,g)$ is the $k$-th Neumann eigenvalue of the domain $(\Omega,g)$. 
\end{lemma}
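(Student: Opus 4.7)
The plan is to take a sequence $\delta_m \to 0$ realizing the liminf, extract $H^1(\Omega)$-weak limits of the first $k+1$ eigenfunctions of $\rho_{\delta_m}g$, and use them as a $(k+1)$-dimensional test space for $\lambda^N_k(\Omega,g)$ whose Rayleigh quotients are bounded by $L := \lim_m \lambda_k(\rho_{\delta_m}g)$. The variational characterization of $\lambda^N_k$ will then yield the desired inequality.

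Let $u_i^m$ denote an $L^2(M,\rho_{\delta_m}^{n/2}dv_g)$-orthonormal family of eigenfunctions with eigenvalues $\lambda_i^m$ for $i=0,\ldots,k$; passing to a subsequence we may assume $\lambda_i^m \to L_i \leqslant L$ with $L<\infty$ (otherwise there is nothing to prove). The inequalities $\int_\Omega(u_i^m)^2\,dv_g\leqslant 1$ and $\int_\Omega|\nabla u_i^m|^2\,dv_g\leqslant \lambda_i^m$ produce a uniform $H^1(\Omega)$-bound, so Rellich yields weak $H^1(\Omega)$-limits $v_i$ converging strongly in $L^2(\Omega)$. The main step is to verify that $v_0,\ldots,v_k$ remain linearly independent in $L^2(\Omega)$. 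If some combination $\varphi^m := \sum c_i u_i^m$ with $\sum c_i^2 = 1$ had $\int_\Omega(\varphi^m)^2 \to 0$, then the normalization $\int_M\rho_{\delta_m}^{n/2}(\varphi^m)^2 = 1$ would force $\delta_m^{n/2}\int_{M\setminus\Omega}(\varphi^m)^2 \to 1$. The rescaled function $\psi^m := \delta_m^{n/4}\varphi^m$ would then satisfy $\|\psi^m\|_{L^2(M\setminus\Omega)} \to 1$; however, the gradient constraint $\delta_m^{(n-2)/2}\int_{M\setminus\Omega}|\nabla\varphi^m|^2 = O(1)$ implies $\|\nabla\psi^m\|_{L^2(M\setminus\Omega)}^2 = O(\delta_m) \to 0$, while the $\Omega$-side trace bound gives $\|\psi^m|_{\partial\Omega}\|_{L^2(\partial\Omega)} = O(\delta_m^{n/4}) \to 0$. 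A Poincar\'e--trace inequality of the form $\|\psi\|_{L^2(C)}^2 \leqslant K(\|\nabla\psi\|_{L^2(C)}^2 + \|\psi\|_{L^2(\partial C)}^2)$ applied to each connected component $C$ of $M\setminus\Omega$, whose boundary lies in $\partial\Omega$, would then force $\psi^m \to 0$ in $L^2(M\setminus\Omega)$, contradicting the $L^2$-normalization.

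Once linear independence holds, I pass the weak eigenvalue equation to the limit. Testing $\int_M\rho_{\delta_m}^{(n-2)/2}\nabla u_i^m\cdot\nabla\tilde\phi\,dv_g = \lambda_i^m\int_M\rho_{\delta_m}^{n/2}u_i^m\tilde\phi\,dv_g$ against a smooth extension to $M$ of an arbitrary $\phi\in C^\infty(\overline\Omega)$, all contributions from $M\setminus\Omega$ carry prefactors $\delta_m^{(n-2)/2}$ or $\delta_m^{n/2}$ which, combined with the a priori bounds on $u_i^m$, vanish in the limit. The resulting identity $\int_\Omega\nabla v_i\cdot\nabla\phi\,dv_g = L_i\int_\Omega v_i\phi\,dv_g$ is precisely the weak form of the Neumann eigenvalue problem on $\Omega$. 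With $k+1$ linearly independent Neumann eigenfunctions of eigenvalue at most $L_k$ in hand, the min-max principle yields $\lambda^N_k(\Omega,g) \leqslant L_k = \liminf_{\delta\to 0}\lambda_k(\rho_\delta g)$. The chief difficulty is the linear independence step: the rescaling $\delta^{n/4}$ must exactly balance the possible concentration of $L^2$-mass on $M\setminus\Omega$ against the weaker gradient suppression $\rho_\delta^{(n-2)/2}$, and the Poincar\'e--trace argument crucially relies on every connected component of $M\setminus\Omega$ meeting $\partial\Omega$.
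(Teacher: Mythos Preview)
Your compactness-of-eigenfunctions strategy is genuinely different from the paper's direct Rayleigh-quotient comparison (which decomposes $H^1(M)$ into functions harmonic on $\Omega^c$ and functions in $H^1_0(\Omega^c)$, then shows $R_\delta[\varphi]\geqslant (1-o(1))R^N_{(\Omega,g)}[\varphi|_\Omega]$ for any $\varphi$ in the eigenspace). Your Poincar\'e--trace argument for linear independence is correct and in fact proves more: since the bound $\|\delta_m^{n/4}\varphi^m\|^2_{L^2(\Omega^c)}\leqslant K(\delta_m\lambda_k^m + C\delta_m^{n/2}\|\varphi^m\|^2_{H^1(\Omega)})$ is uniform in the coefficients $(c_i)$, it yields $\int_\Omega u_i^m u_j^m\to\delta_{ij}$, so the limits $v_i$ are $L^2(\Omega)$-\emph{orthonormal}, not merely independent.

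There is, however, a real gap in the step where you ``pass the weak eigenvalue equation to the limit''. Your claim that the $\Omega^c$-contributions vanish because they carry prefactors $\delta_m^{(n-2)/2}$ or $\delta_m^{n/2}$ fails precisely when $n=2$, the main case of this paper: then $\delta_m^{(n-2)/2}=1$, and the a priori bound gives only $\|\nabla u_i^m\|_{L^2(\Omega^c)}\leqslant\sqrt{\lambda_i^m}$, so $\int_{\Omega^c}\nabla u_i^m\cdot\nabla\tilde\phi$ is merely bounded and need not tend to zero. (For $n\geqslant 3$ your computation is fine.) The cleanest fix is to drop the eigenvalue-equation step altogether: since you already have orthonormality of the $v_i$, take $V=\mathrm{span}(v_0,\ldots,v_k)$ and use weak lower semicontinuity of the Dirichlet integral. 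For any $w=\sum c_iv_i$ with $\sum c_i^2=1$ one has $\|w\|_{L^2(\Omega)}=1$ and
\[
\|\nabla w\|_{L^2(\Omega)}^2\leqslant\liminf_m\Big\|\nabla\sum_i c_iu_i^m\Big\|_{L^2(\Omega)}^2\leqslant\liminf_m\sum_ic_i^2\lambda_i^m\leqslant L_k,
\]
hence $\lambda^N_k(\Omega,g)\leqslant\sup_{w\in V}R^N_{(\Omega,g)}[w]\leqslant L_k$. This avoids any limit on $\Omega^c$ and works uniformly in $n\geqslant 2$.
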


Let us first show how to deduce Proposition~\ref{subdomain} from Lemma~\ref{liminf}.
\begin{proof}[Proof of Proposition~\ref{subdomain}]

Let $\{h_i~ |~ h_i \in [g|_{\Omega}]\}$ be a maximizing sequence of metrics for the functional  $\Lambda^{N}_k(\Omega,[g])$, i.e. 

\begin{align*}
\lim_{i\to\infty}\bar{\lambda}^{N}_k(\Omega, h_i)=\Lambda^{N}_k(\Omega, [g])
\end{align*}

Let $h_i=f_i g|_{\Omega}$, where $f_i \in C^\infty_+(\bar\Omega)$. We define the metric $\widetilde{h_i}=\widetilde{f_i} g$ on $M$, where $\widetilde{f_i}$ is any positive continuation of the function $f_i$ into $\Omega^c$. Then we consider the metric $\rho_\delta \widetilde{h_i}$, where as before
 \begin{align*}
\rho_\delta=
 \begin{cases}
 1&\text{in $\Omega$},\\
 \delta&\text{in $M\setminus\Omega$}.
 \end{cases}
\end{align*}
By Lemma \ref{liminf} we then have

\begin{align*}
\liminf_{\delta \to 0} \lambda_k(\rho_\delta \widetilde{h_i}) \geqslant \lambda^N_k(\Omega, h_i). 
\end{align*}
At the same time, $\Vol(M,\rho_\delta\widetilde{h_i})\to \Vol(\Omega,h_i)$.
Then, by Lemma \ref{identity} one obtains

\begin{align*}
\Lambda_k(M, [g])=\Lambda_k(M,\{\Omega,M\setminus\Omega\},[g]) \geqslant \liminf_{\delta\to 0}\bar\lambda_k(\rho_\delta \widetilde{h_i})\geqslant\bar\lambda^N_k(\Omega, h_i).
\end{align*}
Taking the limit as $i \to \infty$ yields,

\begin{align*}
\Lambda_k(M, [g]) \geqslant \Lambda^{N}_k(\Omega, [g]).
\end{align*}

\end{proof}

\begin{proof}[Proof of Lemma~\ref{liminf}]
The proof below is essentially the proof in~\cite[Section 2, Step 2, Step 3]{enciso2015} with details added. We denote $M\setminus\Omega$ by $\Omega^c$.
Let
$$
\mathcal{H}_1:=\{\varphi \in H^1(M,g) ~|~(\Delta \varphi)_{|_{\Omega^c}}=0\}
$$
and
$$
\mathcal{H}_2:=\{\varphi \in H^1(M,g) ~|~\varphi \in H^1_0(\Omega^c,g),~\varphi_{|_\Omega}=0\}.
$$
{\bf Claim 1.} One has the following decomposition of $H^1(M,g)$
$$
H^1(M,g)=\mathcal{H}_1 \oplus \mathcal{H}_2
$$
into the sum of closed subspaces. Moreover for any $\delta>0$ one has
$$
\int_M\langle\nabla \varphi, \nabla \psi \rangle_{\rho_\delta g} dv_{\rho_\delta g}=0, \forall \varphi \in \mathcal{H}_1, \psi \in \mathcal{H}_2,
$$
where as before $\rho_\delta|_{\Omega}\equiv 1$ and $\rho_\delta|_{M\setminus\Omega}\equiv\delta.$
\begin{proof} 
Since $H^1_0(\Omega,g)$ is complete we immediately conclude that $\mathcal{H}_2$ is a closed subspace of $H^1(M,g)$. 

We show that the space $\mathcal{H}_1$ is also closed. Consider the mapping:
$$
T\colon H^1(M,g) \to \mathcal{H}_2,
$$
defined as
\begin{align*}
T\varphi= 
\begin{cases}
0&\text{in $\Omega$},\\
\widehat{\varphi}-\varphi&\text{in $\Omega^c$},
\end{cases}                                
\end{align*}
where $\widehat{\varphi}$ is the harmonic extension into $\Omega^c$ of the restriction $\varphi |_{\partial\Omega}$.
Since $\mathcal H_1 = \ker T$, it is sufficient to show that $T$ is continuous.

 We have $T=T'-Id$, where 
\begin{align*}
T'\varphi= 
\begin{cases}
\varphi&\text{in $\Omega$},\\
\widehat{\varphi}&\text{in $\Omega^c$}
\end{cases}                                
\end{align*}
and $Id$ is the identity mapping. 
We recall the following estimate~ \cite[Proposition 1.7, p.360]{Taylor}
$$
 ||\widehat{\varphi}||_{H^1(\Omega^c,g)} \leqslant C  ||\varphi|_{\partial \Omega}||_{H^{1/2}(\partial \Omega,g)}.
$$
In the following, the letter $C$ denotes any constant depending only on $(M,g)$ and $\Omega$. Its exact value could differ from line to line. By the Trace Embedding Theorem one has
$$
|| \varphi_{|_{\partial \Omega}} ||_{H^{1/2}(\partial \Omega,g)} \leqslant C ||\varphi||_{H^{1}(\Omega,g)}.
$$
Finally, we have
$$
|| \varphi||_{H^1(\Omega,g)} \leqslant ||\varphi|| _{H^1(M,g)}. 
$$ 
All the above implies
$$
||\widehat{\varphi}||_{H^1(\Omega^c,g)} \leqslant C||\varphi||_{H^1(M,g)}.
$$
Therefore, one has
\begin{gather*}
||T'\varphi||^2_{H^1(M,g)}= ||T'\varphi||^2_{H^1(\Omega,g)}+ ||T'\varphi||^2_{H^1(\Omega^c,g)}= ||\varphi||^2_{H^1(\Omega,g)}+ ||\widehat{\varphi}||^2_{H^1(\Omega^c,g)} \leqslant \\ \leqslant ||\varphi||^2_{H^1(M,g)} +||\widehat{\varphi}||^2_{H^1(\Omega^c,g)} \leqslant ||\varphi||^2_{H^1(M,g)} +C^2||\varphi||^2_{H^1(M,g)} \leqslant C ||\varphi||^2_{H^1(M,g)},
\end{gather*}
which completes the proof that $T$ is continuous.

Finally, we prove that for any $\delta>0$  one has
 $$\int_M\langle\nabla \varphi, \nabla \psi \rangle_{\rho_\delta g} dv_{\rho_\delta g}=0, \forall \varphi \in \mathcal{H}_1, \psi \in \mathcal{H}_2.$$ Indeed,

\begin{gather*}
\int_M\langle\nabla \varphi, \nabla \psi \rangle_{\rho_\delta g} dv_{\rho_\delta g}= \int_\Omega \langle\nabla \varphi, \nabla \psi \rangle_{\rho_\delta g} dv_{\rho_\delta g}+ \int_{\Omega^c} \langle\nabla \varphi, \nabla \psi \rangle_{\rho_\delta g} dv_{\rho_\delta g}=\\= \int_\Omega \langle\nabla \varphi, \nabla \psi \rangle_{g} dv_{g}+ \delta^{\frac{n-2}{2}}\int_{\Omega^c} \langle\nabla \varphi, \nabla \psi \rangle_{g} dv_{g}=\\= \int_\Omega \Delta\varphi \psi~dv_g+ \delta^{\frac{n-2}{2}}\int_{\Omega^c} \Delta\varphi \psi~dv_g=0,
\end{gather*} 
since $\psi_{|_{\partial \Omega}}=0$. 

\end{proof}

For a function $\varphi \in H^1(M,g)$ we fix its decomposition $\varphi_1+\varphi_2$ with
\begin{align*}
\varphi_1=
 \begin{cases}
 \varphi&\text{in $\Omega$},\\
 \widehat{\varphi_{|_\Omega}}&\text{in $\Omega^c$}
 \end{cases}
\end{align*}
and $\varphi_2=\varphi_1-\varphi$.

 For the sake of simplicity we use the symbols $\lambda^\delta_k$ for $\lambda_k(\rho_\delta g)$, $g_\delta$ for $\rho_\delta g$ and $R_\delta$ for the Rayleigh quotient 
$$
R_\delta[\varphi]=\frac{\displaystyle\int_M|\nabla \varphi|^2_{g_\delta}dv_{g_\delta}}{\displaystyle\int_M \varphi^2 dv_{g_\delta}}.
$$

{\bf Claim 2.} \label{bound}
There exists a constant $C_k>0$ such that $\lambda^\delta_k \leqslant C_k$.

\begin{proof}
Theorem \ref{Kor} implies that there exists a constant $C(k)>0$ such that
$$
\Lambda_k(M, [g]) \leqslant C(k).
$$
By Lemma~\ref{identity} for every $\delta$ one has
$$
\lambda^\delta_k \Vol^{\frac{2}{n}}(M,g_\delta) \leqslant \Lambda_k(M, [g]) \leqslant C(k).
$$
Therefore,
$$
\lambda^\delta_k \leqslant \frac{C(k)}{\Vol^{\frac{2}{n}}(M, g_\delta)}=\frac{C(k)}{(\Vol(\Omega,g)+\delta^{\frac{n}{2}}\Vol(\Omega^c,g))^{\frac{2}{n}}} \leqslant \frac{C(k)}{(\Vol(\Omega,g))^{\frac{2}{n}}}=C_k
$$
\end{proof}

Let $W_k$ be the set of $k+1$-dimensional subspaces of $H^1(M,g_\delta)$ satisfying the condition that ${R_\delta}|_{W_k}\leqslant C_k$. We remark that according to Claim 2 the space spanned by the first $k+1$ eigenfunctions is in $W_k$, i.e. $W_k\ne\varnothing$.

{\bf Claim 3.} 
For every $\varphi \in V \in W_k$ there exists a constant $C>0$ such that
$$
\displaystyle\int_{\Omega^c}\varphi^2_2~dv_{g_\delta} \leqslant C\delta \displaystyle\int_M \varphi^2 dv_{g_\delta}. 
$$

\begin{proof}
By Claim 1 one has
 $$
 \int_M\langle\nabla \varphi_1, \nabla \varphi_2 \rangle_{g_\delta} dv_{g_\delta}=0.
 $$ 
Further, since $\varphi \in V \in W_k$ we have 
\begin{gather*}
C_k \geqslant R_\delta[\varphi]=\frac{\displaystyle\int_M|\nabla \varphi|^2_{g_\delta}dv_{g_\delta}}{\displaystyle\int_M \varphi^2 dv_{g_\delta}}=\frac{\displaystyle\int_M|\nabla \varphi_1|^2_{g_\delta}dv_{g_\delta}+\displaystyle\int_{M}|\nabla \varphi_2|^2_{g_\delta}dv_{g_\delta}}{\displaystyle\int_M \varphi^2 dv_{g_\delta}} \geqslant \\ \geqslant \frac{\displaystyle\int_{\Omega^c}|\nabla \varphi_2|^2_{g_\delta}dv_{g_\delta}}{\displaystyle\int_M \varphi^2 dv_{g_\delta}}=\frac{1}{\delta}\frac{\displaystyle\int_{\Omega^c}|\nabla \varphi_2|^2_{g}dv_{g}}{\displaystyle\int_M \varphi^2_2 dv_{g}}\frac{||\varphi_2||^2_{L^2(\Omega^c, g_\delta)}}{||\varphi||^2_{L^2(M, g_\delta)}}\geqslant \frac{\lambda^D_1(\Omega^c,g)}{\delta} \frac{||\varphi_2||^2_{L^2(\Omega^c, g_\delta)}}{||\varphi||^2_{L^2(M, g_\delta)}},
\end{gather*}
where $\lambda^D_1(\Omega^c,g)$ is the first non-zero Dirichlet eigenvalue of $(\Omega^c,g)$.
\end{proof}

{\bf Claim 4.} 
For every $\varphi \in V \in W_k$ and for every sufficiently small $\delta$ there exists a constant $C>0$ such that
$$
\int_{M}\varphi^2~dv_{g_\delta} \leqslant (1+C \sqrt{\delta}) \int_M \varphi^2_1 dv_{g_\delta}. 
$$

\begin{proof}
One has
\begin{align*}
||\varphi||^2_{L^2(M, g_\delta)}=\int_{\Omega^c}(\varphi_1+\varphi_2)^2dv_{g_\delta}+\int_{\Omega}\varphi^2_1dv_{g_\delta} \leqslant \Big(1+\frac{1}{\varepsilon}\Big) \int_{M}\varphi_2^2dv_{g_\delta}+(1+\varepsilon) \int_{M}\varphi^2_1dv_{g_\delta},
\end{align*}
for every $\varepsilon>0$. Applying Claim 3 we obtain
\begin{align*}
||\varphi||^2_{L^2(g_\delta)} \leqslant C\delta\Big(1+\frac{1}{\varepsilon}\Big) \int_{M}\varphi^2dv_{g_\delta}+(1+\varepsilon) \int_{M}\varphi^2_1dv_{g_\delta},
\end{align*}
and hence,
\begin{align*}
\Big(1-C\delta \Big(1+\frac{1}{\varepsilon}\Big)\Big)||\varphi||^2_{L^2(M, g_\delta)} \leqslant (1+\varepsilon) ||\varphi_1||^2_{L^2(M, g_\delta)}.
\end{align*}
Choosing $\varepsilon=\sqrt{\delta}$ completes the proof.  
\end{proof}

{\bf Claim 5.} \label{C3}
For every $\varphi \in V \in W_k$ and for every sufficiently small $\delta$ there exists a constant $C>0$ such that
$$
\int_{\Omega^c}\varphi^2_1~dv_{g} \leqslant C\int_{\Omega} \varphi^2_1 dv_{g}. 
$$

\begin{proof}
By the Sobolev Embedding Theorem one has
$$
||\varphi_1||_{L^2(\Omega^c,g)} \leqslant C ||\varphi_1||_{H^1(\Omega^c,g)}.
$$
Again by~\cite[Proposition 1.7, p.360]{Taylor}) one has
$$
 ||\varphi_1||_{H^1(\Omega^c,g)} \leqslant C  ||\varphi|_{\partial \Omega}||_{H^{1/2}(\partial \Omega,g)}.
$$
By the Trace Embedding Theorem one has
$$
|| \varphi_{|_{\partial \Omega}} ||_{H^{1/2}(\partial \Omega,g)} \leqslant C ||\varphi_1||_{H^{1}(\Omega,g)}.
$$
Altogether
\begin{align}\label{C'}
||\varphi_1||_{L^2(\Omega^c,g)} \leqslant C ||\varphi_1||_{H^{1}(\Omega,g)}.
\end{align}
Further, since $\varphi \in V \in W_k$ and $\int_M\langle\nabla \varphi_1, \nabla \varphi_2 \rangle_{g_\delta} dv_{g_\delta}=0$ one has
\begin{gather*}
C_k \geqslant R_\delta [\varphi]= \frac{\displaystyle\int_M|\nabla \varphi|^2_{g_\delta}dv_{g_\delta}}{\displaystyle\int_M \varphi^2 dv_{g_\delta}}=\frac{\displaystyle\int_M|\nabla \varphi_1|^2_{g_\delta}dv_{g_\delta}+\displaystyle\int_{M}|\nabla\varphi_2|^2_{g_\delta}dv_{g_\delta}}{\displaystyle\int_M \varphi^2 dv_{g_\delta}} \geqslant \frac{\displaystyle\int_{\Omega}|\nabla \varphi_1|^2_{g}dv_{g}}{\displaystyle\int_M \varphi^2 dv_{g_\delta}},
\end{gather*}
hence,
\begin{gather*}
\int_\Omega |\nabla\varphi_1|_{g}^2~dv_{g} \leqslant C_k \int_M \varphi^2 dv_{g_\delta},
\end{gather*}
and by Claim 4 one gets
\begin{gather*}
\int_\Omega |\nabla\varphi_1|_{g}^2~dv_{g} \leqslant C_k (1+C\sqrt{\delta}) ||\varphi_1||^2_{L^2(M, g_\delta)}=\\=C_k (1+C\sqrt{\delta}) (||\varphi_1||^2_{L^2(\Omega, g)}+\delta^{n/2} ||\varphi_1||^2_{L^2(\Omega^c, g)}). 
\end{gather*}
Plugging the latter in \eqref{C'} we obtain
\begin{gather*}
 ||\varphi_1||^2_{L^2(\Omega^c,g)}\leqslant C ||\varphi_1||^2_{H^{1}(\Omega,g)}=C( ||\varphi_1||^2_{L^{2}(\Omega,g)}+||\nabla \varphi_1||^2_{L^{2}(\Omega,g)}) \leqslant \\ \leqslant C\big(||\varphi_1||^2_{L^{2}(\Omega,g)}+C_k (1+C\sqrt{\delta}) (||\varphi_1||^2_{L^2(\Omega, g)}+ \delta^{n/2}||\varphi_1||^2_{L^2(\Omega^c, g)})\big).
\end{gather*}
Rearranging the terms yields the required inequality.
\end{proof}
By Claim 4 for every $\varphi \in V \in W_k$ and  $\int_M\langle\nabla \varphi_1, \nabla \varphi_2 \rangle_{g_\delta} dv_{g_\delta}=0$ one has
\begin{gather*}
R_\delta[\varphi]=\frac{\displaystyle\int_M|\nabla \varphi|^2_{g_\delta}dv_{g_\delta}}{\displaystyle\int_M \varphi^2 dv_{g_\delta}}=\frac{\displaystyle\int_M|\nabla \varphi_1|^2_{g_\delta}dv_{g_\delta}+\displaystyle\int_{M}|\nabla \varphi_2|^2_{g_\delta}dv_{g_\delta}}{\displaystyle\int_M \varphi^2 dv_{g_\delta}} \geqslant \\ \geqslant \frac{1}{1+C\sqrt{\delta}} \frac{\displaystyle\int_M|\nabla \varphi_1|^2_{g_\delta}dv_{g_\delta}+\int_{M}|\nabla \varphi_2|^2_{g_\delta}dv_{g_\delta}}{\displaystyle\int_M \varphi_1^2 dv_{g_\delta}} \geqslant \\ \geqslant \frac{1}{1+C\sqrt{\delta}} \frac{\displaystyle\int_M|\nabla \varphi_1|^2_{g_\delta}dv_{g_\delta}}{\displaystyle\int_M \varphi_1^2 dv_{g_\delta}}=\frac{1}{1+C\sqrt{\delta}} \frac{\displaystyle\int_M|\nabla \varphi_1|^2_{g_\delta}dv_{g_\delta}}{\displaystyle\int_{\Omega} \varphi_1^2 dv_{g}+\delta^{\frac{n}{2}}\int_{\Omega^c}\varphi_1^2 dv_{g}}.
\end{gather*}
By Claim 5 we then have
\begin{gather*}
R_\delta[\varphi] \geqslant \frac{1}{(1+\delta^{\frac{n}{2}}C)(1+C\sqrt{\delta})} \frac{\displaystyle\int_M|\nabla \varphi_1|^2_{g_\delta}dv_{g_\delta}}{\displaystyle\int_{\Omega} \varphi_1^2 dv_{g}} \geqslant \\ \geqslant \frac{1}{(1+\delta^{\frac{n}{2}}C)(1+C\sqrt{\delta})} \frac{\displaystyle\int_\Omega|\nabla \varphi_1|^2_{g}dv_{g}}{\displaystyle\int_{\Omega} \varphi_1^2 dv_{g}}=\frac{1}{(1+\delta^{\frac{n}{2}}C)(1+C\sqrt{\delta})} \frac{\displaystyle\int_\Omega|\nabla \varphi|^2_{g}dv_{g}}{\displaystyle\int_{\Omega} \varphi^2 dv_{g}} \geqslant \\ \geqslant \frac{1}{(1+\delta^{\frac{n}{2}}C)(1+C\sqrt{\delta})} R^N_{(\Omega,g)}[\varphi|_{\Omega}],
\end{gather*}
where $R^N_{(\Omega,g)}$ denotes the Rayleigh quotient for the Neumann problem in the domain $(\Omega,g)$.
Let $V=\myspan\langle \psi_0,\ldots, \psi_k \rangle$, where $\psi_i$ is in the $i$-th eigenspace of $(M,g_\delta)$. Then
\begin{equation}\label{before}
\begin{split}
\lambda^\delta_k=\max_{\varphi \in V}R_\delta[\varphi] &\geqslant \frac{1}{(1+ \delta^{\frac{n}{2}} C)(1+C\sqrt{\delta})} \max_{\varphi \in V}R^N_{(\Omega,g)}[\varphi|_{\Omega}] \geqslant \\ &\geqslant \frac{1}{(1+ \delta^{\frac{n}{2}} C)(1+C\sqrt{\delta})}\lambda^N_k(\Omega,g),
\end{split}
\end{equation}
since by unique continuation the restriction to $\Omega$ of the functions $\psi_i$ form the space of the same dimension. 
Taking the $\liminf$ as $\delta \to 0$ in~\eqref{before} competes the proof.
\end{proof}

Using Proposition~\ref{subdomain} one gets the following corollary. 
\begin{corollary}
\label{Neumann cor2}
Let $(M, g)$ be a closed compact Riemannian manifold. Consider a sequence $\{ K_n \}$ of smooth domains $K_n \subset M$ such that
\begin{itemize}
\item $K_r \subset K_s$ $\forall r>s$;
\item $\cap_n K_n=\{p_1,\ldots,p_l\}$ for some points $p_1,\ldots,p_l \in M$.
\end{itemize} 
Then one has
$$
\lim_{n \to \infty}\Lambda^N_k(M \setminus K_n, [g])= \Lambda_k(M, [g]).
$$ 
\end{corollary}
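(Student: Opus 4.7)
The statement is bracketed by two applications of Proposition~\ref{subdomain}: one gives the upper bound directly, the other furnishes a monotonicity that reduces the lower bound to the special case when the shrinking domains are geodesic balls, to which Lemma~\ref{Neumann conv} applies.

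\textbf{Upper bound.} The closed-case part of Proposition~\ref{subdomain}, applied to $\Omega = M\setminus K_n$ inside the closed $(M,g)$, gives
\[
\Lambda^N_k(M\setminus K_n,[g]) \leqslant \Lambda_k(M,[g])
\]
for every $n$, so $\limsup_{n\to\infty}\Lambda^N_k(M\setminus K_n,[g])\leqslant \Lambda_k(M,[g])$.

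\textbf{Lower bound via ball complements.} Set $M_\epsilon := M\setminus\bigcup_{i=1}^l B_\epsilon(p_i)$. Since $\{K_n\}$ is nested with $\bigcap_n K_n=\{p_1,\ldots,p_l\}$, for every $\epsilon>0$ there is an $n_0(\epsilon)$ such that $K_n\subset \bigcup_i B_\epsilon(p_i)$ whenever $n\geqslant n_0$; in particular $M_\epsilon\subset M\setminus K_n$. Applying the boundary-case version of Proposition~\ref{subdomain} inside the compact manifold with boundary $\overline{M\setminus K_n}$, with the subdomain $M_\epsilon$, I obtain
\[
\Lambda^N_k(M\setminus K_n,[g]) \geqslant \Lambda^N_k(M_\epsilon,[g]) \qquad\text{for all } n\geqslant n_0(\epsilon).
\]
Thus it suffices to prove $\Lambda^N_k(M_\epsilon,[g])\to \Lambda_k(M,[g])$ as $\epsilon\to 0$.

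\textbf{Convergence for ball complements.} The inequality $\Lambda^N_k(M_\epsilon,[g])\leqslant \Lambda_k(M,[g])$ is again Proposition~\ref{subdomain}. For the matching lower bound I fix $\tilde g\in [g]$. Lemma~\ref{Neumann conv} applied to $\tilde g$ gives $\lambda^N_i(M_\epsilon,\tilde g)\to \lambda_i(M,\tilde g)$ as $\epsilon\to 0$ for each $i$, and clearly $\Vol(M_\epsilon,\tilde g)\to \Vol(M,\tilde g)$, so $\bar\lambda^N_k(M_\epsilon,\tilde g)\to \bar\lambda_k(M,\tilde g)$. Consequently
\[
\liminf_{\epsilon\to 0}\Lambda^N_k(M_\epsilon,[g]) \geqslant \liminf_{\epsilon\to 0}\bar\lambda^N_k(M_\epsilon,\tilde g) = \bar\lambda_k(M,\tilde g).
\]
Taking the supremum of the right-hand side over $\tilde g\in[g]$ yields $\liminf_{\epsilon\to 0}\Lambda^N_k(M_\epsilon,[g])\geqslant \Lambda_k(M,[g])$, so in fact $\Lambda^N_k(M_\epsilon,[g])\to \Lambda_k(M,[g])$. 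Combining this with the sandwich from the previous step completes the proof.

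\textbf{Main obstacle.} The only nontrivial point is the exchange of the supremum over conformal factors with the $\epsilon\to 0$ limit; this is what lets us pass from the pointwise convergence supplied by Lemma~\ref{Neumann conv} to the convergence of the conformal suprema $\Lambda^N_k(M_\epsilon,[g])$. This exchange works here because the monotonicity provided by Proposition~\ref{subdomain} already pins the correct upper bound on $\Lambda^N_k(M_\epsilon,[g])$, so only the elementary $\liminf$ direction has to be extracted from the metric-by-metric convergence.
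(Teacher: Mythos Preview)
Your proof is correct and follows essentially the same approach as the paper: both use Proposition~\ref{subdomain} for the upper bound, then for the lower bound sandwich $M\setminus K_n$ against complements of small geodesic balls via the boundary case of Proposition~\ref{subdomain}, and finally invoke Lemma~\ref{Neumann conv} together with a supremum over conformal metrics. The only cosmetic difference is organizational: the paper fixes a maximizing sequence $g^m$ for $\Lambda_k(M,[g])$ and for each $m$ chooses $g^m$-geodesic balls $B_{\epsilon_n}(p_i)\supset K_n$ directly, whereas you first isolate the special case $K_n=\bigcup_i B_\epsilon(p_i)$ and then reduce to it; the content is the same. One tiny point to tidy up: you apply Lemma~\ref{Neumann conv} with the metric $\tilde g$ while the balls defining $M_\epsilon$ are presumably $g$-geodesic, so strictly speaking you should either take $\tilde g$-balls or note that $g$-balls of radius $\epsilon$ are contained in $\tilde g$-balls of comparable radius (both metrics being smooth on the compact $M$), which is harmless.
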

\begin{proof}
Proposition~\ref{subdomain} implies that
$$
\limsup_{n \to \infty}\Lambda^N_k(M \setminus K_n, [g]) \leqslant \Lambda_k(M, [g]).
$$
It remains to show that 
$$
\Lambda_k(M, [g]) \leqslant \liminf_{n \to \infty}\Lambda^N_k(M \setminus K_n, [g]).
$$
Let $g^m$ be a maximizing sequence for the functional $\Lambda_k(M, [g])$. Then for a fixed $m$ we consider geodesic balls $B_{\epsilon_n}(p_i)$ of radius $\epsilon_n\to 0$ in metric $g^m$ centred at the points $p_1,\ldots,p_l \in M$ such that $K_n \subset \cup^l_{i=1}B_{\epsilon_n}(p_i)$. Then $M\setminus \cup^l_{i=1}B_{\epsilon_n}(p_i) \subset M\setminus K_n$ and Proposition~\ref{subdomain} implies that
\begin{gather}\label{420}
\Lambda^N_k(M\setminus K_n,[g]) \geqslant \Lambda^N_k(M\setminus  \cup^l_{i=1}B_{\epsilon_n}(p_i),[g])\geqslant \bar\lambda^N_k(M\setminus \cup^l_{i=1}B_{\epsilon_n}(p_i),g^m).
\end{gather}
Note that $\Vol(M \setminus \cup^l_{i=1}B_{\epsilon_n}(p_i),g^m)\to\Vol(M,g^m)$ as $n\to\infty$ and by Lemma~\ref{Neumann conv} one has $\lambda^N_k(M \setminus \cup^l_{i=1}B_{\epsilon_n}(p_i),g^m)\to\lambda_k(M,g^m)$. Hence, $\bar\lambda^N_k(M\setminus  \cup^l_{i=1}B_{\epsilon_n}(p_i),g^m) \to \bar\lambda_k(M,g^m)$ as $n\to\infty$. 
Taking $\liminf_{n\to\infty}$ in~\eqref{420} one then gets
$$
\liminf_{n\to\infty}\Lambda^N_k(M\setminus K_n,[g]) \geqslant \bar\lambda_k(M,g^m).
$$
Passing to the limit as $m\to\infty$ completes the proof.
\end{proof}

\subsection{Disconnected manifolds.}
\begin{lemma}
\label{disconnected}
Let $(\Omega,g) = \sqcup_{i=1}^s(\Omega_i,g_i)$ be a disjoint union of Riemannian manifolds of dimension $n$ with smooth boundary. Then for all $k>0$ one has
$$
\Lambda^N_k(\Omega,[g])^{\frac{n}{2}} = \max_{\sum\limits_{i=1}^s k_i=k,\,\,\,k_i>0}\,\,\sum_{i=1}^s\Lambda^N_{k_i}(\Omega_i,[g_i])^{\frac{n}{2}}.
$$
Similarly, if $(M,g)=\sqcup_{i=1}^s(M_i,g_i)$ is a disjoint union of closed Riemannian manifolds of dimension $n$, then one has
$$
\Lambda_k(M,[g])^{\frac{n}{2}} = \max_{\sum\limits_{i=1}^s k_i=k,\,\,\,k_i>0}\,\,\sum_{i=1}^s\Lambda_{k_i}(M_i,[g_i])^{\frac{n}{2}}.
$$
\end{lemma}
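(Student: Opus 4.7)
The plan is to establish both inequalities by direct construction. The closed case follows from the bounded case by the same argument with the Neumann spectrum replaced by the Laplace spectrum, so I only treat the Neumann case. Write $\mu^{(i)}_1 \leqslant \mu^{(i)}_2 \leqslant \cdots$ for the positive Neumann eigenvalues of $(\Omega_i, g_i)$ and $\mu_1 \leqslant \mu_2 \leqslant \cdots$ for their sorted union with multiplicity. Since the spectrum of a disjoint union is the multiset union of the component spectra (constants on each component span the $s$-dimensional $0$-eigenspace), one has the identity $\lambda^N_k(\Omega, g) = \mu_{k-s+1}$.

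For the inequality $\geqslant$, fix a positive partition $k = k_1 + \cdots + k_s$. For each $i$, pick a near-maximizing sequence $h_i^{(m)} \in [g_i]$ with $\bar\lambda^N_{k_i}(\Omega_i, h_i^{(m)}) \to \Lambda^N_{k_i}(\Omega_i, [g_i])$, and rescale by the constant $(t_i^{(m)})^2 := \lambda^N_{k_i}(\Omega_i, h_i^{(m)})$ so that the $k_i$-th Neumann eigenvalue equals $1$ on every rescaled component. On the disjoint-union metric $\tilde g^{(m)} := \sqcup_i (t_i^{(m)})^2 h_i^{(m)}$, each component contributes exactly $k_i$ positive eigenvalues in $(0,1]$ with the $k_i$-th equal to $1$, so the combined spectrum satisfies $\mu_{k-s+1} = \cdots = \mu_k = 1$ and hence $\lambda^N_k(\Omega, \tilde g^{(m)}) = 1$. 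A direct computation yields
\begin{equation*}
\bar\lambda^N_k(\Omega, \tilde g^{(m)})^{n/2} = \Vol(\Omega, \tilde g^{(m)}) = \sum_i (t_i^{(m)})^n \Vol(\Omega_i, h_i^{(m)}) = \sum_i \bar\lambda^N_{k_i}(\Omega_i, h_i^{(m)})^{n/2},
\end{equation*}
which tends to $\sum_i \Lambda^N_{k_i}(\Omega_i, [g_i])^{n/2}$. Maximizing over partitions gives the lower bound.

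For the inequality $\leqslant$, fix any $g = \sqcup_i g_i \in [g]$ and set $\lambda := \lambda^N_k(\Omega, g) = \mu_{k-s+1}$. Let $k_i' \geqslant 0$ be the number of positive eigenvalues of $(\Omega_i, g_i)$ appearing among $\mu_1, \ldots, \mu_{k-s+1}$, so that $\sum_i k_i' = k-s+1$, $\mu^{(i)}_{k_i' + 1} \geqslant \lambda$ for every $i$, and some distinguished index $i^*$ satisfies $k_{i^*}' \geqslant 1$ together with $\mu^{(i^*)}_{k_{i^*}'} = \lambda$ (such an $i^*$ exists because $\mu_{k-s+1}$ must come from some component). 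Define $K_{i^*} := k_{i^*}'$ and $K_i := k_i' + 1$ for $i \neq i^*$; then $\sum_i K_i = (k-s+1)+(s-1) = k$ and every $K_i \geqslant 1$. Since $\lambda \leqslant \mu^{(i)}_{K_i}$ for every $i$ (with equality at $i^*$),
\begin{equation*}
\lambda^{n/2}\Vol(\Omega_i, g_i) \leqslant (\mu^{(i)}_{K_i})^{n/2}\Vol(\Omega_i, g_i) = \bar\lambda^N_{K_i}(\Omega_i, g_i)^{n/2} \leqslant \Lambda^N_{K_i}(\Omega_i, [g_i])^{n/2}.
\end{equation*}
Summing over $i$ yields $\bar\lambda^N_k(\Omega, g)^{n/2} = \lambda^{n/2}\Vol(\Omega, g) \leqslant \sum_i \Lambda^N_{K_i}(\Omega_i, [g_i])^{n/2}$, which is bounded above by the maximum over all positive partitions of $k$; taking the supremum over $g$ completes the proof.

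The main technical obstacle is combinatorial: naively counting positive eigenvalues of $(\Omega_i, g_i)$ not exceeding $\lambda$ produces a partition of $k-s+1$ rather than of $k$ as required by the statement, and the \emph{$i^*$-trick}---bumping the index by $1$ on every component except the one already realizing $\mu^{(i^*)}_{K_{i^*}} = \lambda$---is what restores the correct total and guarantees every $K_i \geqslant 1$, so that the resulting sum is a genuine element of the maximum on the right-hand side. Multiplicities at $\lambda$ and components with $k_i' = 0$ are handled by choosing the assignment $j \mapsto i_j$ in the enumeration of $\mu_j$ appropriately, but do not alter the structure of the argument.
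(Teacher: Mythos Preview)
Your proof is correct. The $\geqslant$ direction is essentially the paper's argument with a cosmetic change of normalization (you rescale each component so that $\lambda^N_{k_i}=1$, the paper rescales so that $\lambda^N_{k_i}=\Lambda^N_k(\Omega,[g])$); both then read off the inequality from the volume identity. One minor imprecision: your phrase ``exactly $k_i$ positive eigenvalues in $(0,1]$'' is not literally true if $\mu^{(i)}_{k_i+1}=1$, but the conclusion $\mu_{k-s+1}=1$ survives, since at most $\sum(k_i-1)=k-s$ positive eigenvalues are strictly below $1$ while at least $k$ are $\leqslant 1$.

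The $\leqslant$ direction is where you genuinely diverge from the paper. The paper argues by contradiction: it fixes a maximizing sequence of unit-volume metrics, extracts for each component an index $d_i$ controlling how many eigenvalues stay strictly below $\Lambda^N_k(\Omega,[g])$ in the limit, shows $\sum(d_i+1)\geqslant k+1$ by a volume comparison, and then derives a contradiction from the pigeonhole principle on the union spectrum. Your argument is direct and works metric-by-metric: for a fixed $g$ you locate the component $i^*$ realizing $\mu_{k-s+1}$, bump the remaining indices by one to convert a partition of $k-s+1$ into a positive partition of $k$, and bound each summand by the corresponding conformal supremum. This is more elementary, avoids subsequences and $\limsup$ bookkeeping, and makes the combinatorics of the extra $s-1$ zero eigenvalues explicit. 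The paper's route, on the other hand, is closer in spirit to the original Wolf--Keller argument and perhaps generalizes more readily to situations where one only controls limiting quantities rather than individual metrics.
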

\begin{proof}
The proof is reminiscent of the argument due to Wolf and Keller~\cite{WK94}. The differences between the proofs of two equalities are cosmetic, we only present the proof of the first equality.

{\bf Inequality $\geqslant$.} 

Fix the indices $k_i>0$ satisfying $\sum k_i=k$.  
Let $\{g_i^m\}$ be a maximizing sequence of metrics such that  $\bar\lambda^N_{k_i}(\Omega_i,g^m_i)\to\Lambda^N_{k_i}(\Omega_i,[g_i])$. Up to a rescaling one can assume that $\lambda^N_{k_i}(\Omega_i, g^m_i)=\Lambda^N_k(\Omega, [g])$. Then, one has
$$
\Vol(\Omega_i,g^m_i)\to \frac{\Lambda^N_{k_i}(\Omega_i, [g_i])^\frac{n}{2}}{\Lambda^N_{k}(\Omega,[g])^\frac{n}{2}}
$$

Consider a sequence of metrics $\{g^m\}$ on $\Omega$ defined as $g^m|_{\Omega_i}=g^m_i$. Since the spectrum of disjoint union is the union of spectra of each component, then for large enough $m$ one has that $\lambda^N_k(\Omega,g^m) = \Lambda^N_k(\Omega,[g])$. At the same time, by definition of $\Lambda^N_k(\Omega,[g])$ one has
$$
\Lambda^N_k(\Omega,[g])\Vol(\Omega, g^m)^\frac{2}{n}=\lambda^N_k(\Omega,g^m)\Vol(\Omega, g^m)^\frac{2}{n}\leqslant \Lambda^N_{k}(\Omega,[g]),
$$
i.e. $\Vol(\Omega,g^m) \leqslant 1$. Therefore, one obtains
$$
1\geqslant \Vol(\Omega,g^m) = \sum_i\Vol(\Omega_i,g^m_i)\to \frac{\sum_i \Lambda^N_{k_i}(\Omega_i, [g_i])^\frac{n}{2}}{\Lambda^N_{k}(\Omega,[g])^\frac{n}{2}}.
$$
Passing to the limit $m\to\infty$ yields the inequality.

{\bf Inequality $\leqslant$.} 

Assume the contrary, i.e. 
\begin{equation}
\label{assumption}
\Lambda^N_k(\Omega,[g])^{\frac{n}{2}} > \max_{\sum\limits_{i=1}^s k_i=k,\,\,\,k_i>0}\,\,\sum_{i=1}^s\Lambda^N_{k_i}(\Omega_i,[g_i])^{\frac{n}{2}}.
\end{equation}
Let $\{g^m\}$ be a maximizing sequence of metrics of volume $1$ such that $\lambda^N_{k}(\Omega,g^m)\to\Lambda^N_{k}(\Omega,[g])$. Let $g_i^m$ be a restriction of $g^m$ to $\Omega_i$. Further, let $d^m_i$ be the largest number such that $\lambda^N_{d^m_i}(\Omega_i,g_i^m)<\Lambda^N_k(\Omega,[g])$ and $\limsup_{m\to\infty}\lambda^N_{d^m_i}(\Omega_i,g_i^m)<\Lambda^N_k(\Omega,[g])$
and $V^m_i$ be $\Vol(\Omega_i,g^m_i)$.
Then one has  $d^m_i\leqslant k$
and $V^m_i\leqslant 1$.
Therefore, up to a choice of a subsequence one can assume that $d^m_i = d_i$ does not depend on $m$
and $V^m_i\to V_i$ as $m\to\infty$.

We claim that $\sum_i(d_i+1)\geqslant k+1$. Otherwise, by~\eqref{assumption} and definition of $d_i$ one has
$$
\Lambda^N_k(\Omega,[g])^\frac{n}{2}\sum_i V_i\leqslant\sum_i\limsup_{m\to\infty}\bar\lambda^N_{d_i+1}(\Omega_i,g^m_i)^\frac{n}{2}\leqslant\sum_i\Lambda^N_{d_i+1}(\Omega_i,[g])^{\frac{n}{2}}<\Lambda^N_k(\Omega,[g])^{\frac{n}{2}}.
$$
Since $g^m$ are of unit volume, one has $\sum_i V_i=1$. 
Thus, one arrives at $\Lambda^N_k(\Omega,[g])^\frac{n}{2}<\Lambda^N_k(\Omega,[g])^\frac{n}{2},$ which is a contradiction.

Therefore, one has $\sum(d_i+1)\geqslant k+1$. Since the spectrum of a union is a union of spectra, one has $\lambda^N_k(\Omega,g^m)\in\bigcup_i\{\lambda_0(\Omega_i,g^m_i),\ldots,\lambda_{d_i}(\Omega_i,g^m_i)\}$, i.e. 
$$
\Lambda^N_k(\Omega,g)=\limsup_{m\to\infty}\lambda^N_k(\Omega,g^m)\leqslant\max_i\limsup_{m\to\infty}\lambda_{d_i}(\Omega_i,g^m_i)<\Lambda^N_k(\Omega,[g]).
$$
Since $g^m$ are of unit volume we arrive at a contradiction.
\end{proof}

Finally, as a corollary of Lemma~\ref{identity2}, Proposition~\ref{subdomain} and Lemma~\ref{disconnected} one obtains.

\begin{lemma}\label{omega_i}
Let $(M,g)$ be a closed Riemannian manifold of dimension $n$. Consider a set of pairwise disjoint smooth domains $\{\Omega_i\}^s_{i=1}$ in $M$ such that $M=\bigcup^s_{i=1} \overline\Omega_i$. Then one has
$$
\Lambda_k(M, [g])^{\frac{n}{2}} \geqslant \max_{\sum_{i=1}^s k_i=k,\,\,\,k_i\geqslant0} \sum^s_{i=1} \Lambda^N_{k_i}(\Omega_i, [g])^{\frac{n}{2}}.
$$

If $M$ is compact with non-empty boundary, then one has
$$
\Lambda^N_k(M, [g])^{\frac{n}{2}} \geqslant \max_{\sum_{i=1}^s k_i=k,\,\,\,k_i\geqslant 0} \sum^s_{i=1} \Lambda^N_{k_i}(\Omega_i, [g])^{\frac{n}{2}}.
$$
\end{lemma}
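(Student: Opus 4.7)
The strategy is to chain together the three cited ingredients: Lemma~\ref{identity2}, Proposition~\ref{subdomain}, and Lemma~\ref{disconnected}. Fix any partition $k_1,\ldots,k_s\geqslant 0$ with $\sum_{i=1}^s k_i=k$; I will produce the pointwise bound
$$
\Lambda_k(M,[g])^{n/2}\geqslant \sum_{i=1}^s\Lambda^N_{k_i}(\Omega_i,[g])^{n/2},
$$
from which the lemma follows by taking the maximum over all such partitions. The boundary case ($\partial M\ne\varnothing$) is handled identically using the boundary versions of each result.

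First, form the abstract disjoint union $(\Omega,h):=\sqcup_{i=1}^s(\overline\Omega_i,g|_{\overline\Omega_i})$. Lemma~\ref{identity2} then gives
$$
\Lambda_k(M,[g])\geqslant\Lambda^N_k(\Omega,[h]),
$$
converting the problem on $M$ into a Neumann eigenvalue problem on the (disconnected) manifold with boundary $\Omega$.

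Next, to absorb partitions with some $k_i=0$, set $S:=\{i : k_i>0\}$ and let $\Omega_S:=\sqcup_{i\in S}\overline\Omega_i\subset\Omega$, viewed as a smooth (possibly disconnected) subdomain consisting of a union of whole connected components of $\Omega$. The boundary case of Proposition~\ref{subdomain}, applied with ambient manifold-with-boundary $\Omega$ and subdomain $\Omega_S$, yields
$$
\Lambda^N_k(\Omega,[h])\geqslant\Lambda^N_k(\Omega_S,[h|_{\Omega_S}]).
$$
Since $k_i>0$ for every $i\in S$ and $\sum_{i\in S}k_i=k$, the partition $(k_i)_{i\in S}$ is admissible in Lemma~\ref{disconnected} applied to the disjoint union $\Omega_S$, and one obtains
$$
\Lambda^N_k(\Omega_S,[h|_{\Omega_S}])^{n/2}\geqslant\sum_{i\in S}\Lambda^N_{k_i}(\Omega_i,[g])^{n/2}=\sum_{i=1}^s\Lambda^N_{k_i}(\Omega_i,[g])^{n/2},
$$
where the final equality uses $\Lambda^N_0=0$ (the zeroth Neumann eigenvalue vanishes identically). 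Chaining the three displayed inequalities produces the required pointwise bound; the maximum over partitions then gives the lemma.

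The only mildly delicate point is the treatment of zero entries in the partition: Lemma~\ref{disconnected} as stated requires strictly positive indices, so partitions with some $k_i=0$ cannot be fed into it directly. This is precisely the role of Proposition~\ref{subdomain} here, namely to allow restriction from $\Omega$ to the sub-collection $\Omega_S$ indexed by $S\subseteq\{1,\ldots,s\}$. Everything else is a straightforward concatenation of the quoted statements.
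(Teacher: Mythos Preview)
Your proof is correct and follows essentially the same route as the paper's. The only difference is the order in which you chain the ingredients: the paper first applies Proposition~\ref{subdomain} to pass from $M$ to the union $\cup_{i\in S}\overline\Omega_i$ inside $M$, then Lemma~\ref{identity2} to split that union into its pieces, whereas you first apply Lemma~\ref{identity2} to split all of $M$ and then use the boundary version of Proposition~\ref{subdomain} to discard the components with $k_i=0$. Both orderings are valid and the handling of zero indices (restricting to the index set $S=\{i:k_i>0\}$ before invoking Lemma~\ref{disconnected}) is identical.
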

\begin{proof}
Once again, we only give a proof for the closed case.

Fix indices $k_i\geqslant 0$ such that $\sum_{i=1}^s k_i=k$. 
Let $I = \{i\,|\,k_i>0\}$ and set $\Omega_1 = \cup_{i\in I}\overline\Omega_i\subset M$, $(\Omega_2,h) = \sqcup_{i\in I}(\overline\Omega_i,g_{\overline\Omega_i})$. Applying in order: Proposition~\ref{subdomain}, Lemma~\ref{identity2} and Lemma~\ref{disconnected}, one obtains
$$
\Lambda_k(M,[g])\geqslant \Lambda^N_k(\Omega_1,[g])\geqslant \Lambda_k(\Omega_2,[h])\geqslant \sum_{i\in I} \Lambda^N_{k_i}(\Omega_i, [g])^{\frac{n}{2}}=\sum^s_{i=1} \Lambda^N_{k_i}(\Omega_i, [g])^{\frac{n}{2}},
$$
where in the last equality we used that $\Lambda_0(\Omega_j,[g])=0$ for any $j$.
\end{proof}

\section{Proof of Theorem~\ref{conf&conv}} 
\label{appendix}

We remind the reader that as $n\to\infty$ one has $s$ $1$-sided geodesics and $\widetilde s$ 2-sided geodesics collapse and the canonical representative metric $h_n \in c_n$ is hyperbolic if $\chi(\Sigma_\gamma)<0$ and is flat if $\chi(\Sigma_\gamma)=0$. We start with the hyperbolic case and discuss the flat case at the end of the section.

We introduce the following notations
\begin{itemize}
\item $\mathcal{C}^n_i$ for collars of $1$-sided collapsing geodesics, $i=1,\ldots,s$. Their width is denoted by $w^n_i$ 
\item $\widetilde{\mathcal{C}}^n_i$ for collars of $2$-sided collapsing geodesics, $i=1,\ldots,\widetilde s$. Their width is denoted by $\widetilde w^n_i$
\item $M^n_j$ for a connected component of $M \setminus (\cup^s_{i=1} \mathcal{C}^n_i\bigcup \cup_{i=1}^{\widetilde s}\widetilde{\mathcal C}^n_i)$
\item for $-\widetilde w^n_i\leqslant a\leqslant b\leqslant \widetilde w^n_i$, we denote $\widetilde{\mathcal{C}}^n_i(a,b)\subset \widetilde{\mathcal{C}}^n_i$ the subset $\{(t, \theta)\,|a\leqslant t\leqslant b\}$
\item for $0\leqslant a\leqslant b\leqslant w^n_i$, we denote $\mathcal{C}^n_i(a,b)\subset\mathcal{C}^n_i$ the subset 
$$
\{(t, \theta)\,|a\leqslant t\leqslant b\}\cup\{(t, \theta)\,|-b\leqslant t\leqslant -a\}/\sim.
$$ 
It is a M\"obius band if $a=0$ and cylinder otherwise.


\item Let $\alpha^n = \cup_{i=1}^s\alpha^n_i\bigcup\cup_{i=1}^{\widetilde s}\{\alpha^n_{j,-},\alpha^n_{j,+}\}$, where $0\leqslant\alpha^n_i\leqslant w^n_i$ and $-\widetilde w^n_i\leqslant \alpha^n_{i,-}\leqslant\alpha^n_{i,+} \leqslant \widetilde w^n_i$.
We denote by $M^n_j(\alpha^n)$ the connected component of
 $$
 M \setminus \Big(\cup^{s}_{i=1} \mathcal{C}_i^n(0,\alpha^n_{i}) \bigcup \cup^{\tilde s}_{i=1} \widetilde{\mathcal{C}}_i^n(\alpha^n_{i,-},\alpha^n_{i,+}) \Big)
 $$
  which contains $M_j^n$;

\item $a_n \ll b_n$ for two sequences $\{a_n\}$ and $\{b_n\}$ satisfying $a_n, b_n \to +\infty$  and $\frac{a_n}{b_n} \to 0$ as $n \to \infty$. 
\end{itemize}

\subsection{Inequality $\geqslant$.} We start with proving the inequality

\begin{equation}\label{aim}
\begin{split}
&\liminf_{n \to \infty} \Lambda_k (M, c_n) \geqslant\\ &\max \Big(\sum^{\widetilde m}_{i=1} \Lambda_{\widetilde k_i}(\widetilde\Sigma_{\widetilde\gamma_{i}}, c_\infty)+\sum^{m}_{i=1} \Lambda_{k_{i}}(\Sigma_{\gamma_{i}},c_\infty) + \sum_{i=1}^{\widetilde s}\Lambda_{\widetilde r_i}(\mathbb{S}^2) + \sum_{i=1}^s\Lambda_{r_i}(\mathbb{RP}^2) \Big),
\end{split}
\end{equation}

Consider the domains $\mathcal{C}^n_i(0,\alpha^n_i)$ for $1 \leqslant i \leqslant s$, $\widetilde{\mathcal{C}}^n_i(\alpha_{i,-}^n,\alpha_{i,+}^n)$ for $1 \leqslant i \leqslant \tilde s$, where $w^n_i-\alpha_i^n \ll w^n_i,$ $\alpha_i^n\to\infty$ and $ \widetilde w^n_i - \alpha_{i,\pm}^n \ll \widetilde w^n_i,$ $\alpha_{i,\pm}^n\to\infty$ and the domain $M_j^n(\alpha^n)$. By Lemma~\ref{omega_i} we have
\begin{equation}
\label{aim2}
\begin{split}
&\Lambda_k(M, c_n) \geqslant \max \Big(\sum^{s}_{i=1}\Lambda^N_{r_{i}}(\mathcal{C}^n_i(0, \alpha_{i}^n), c_n)+\\&\sum^{\tilde s}_{i=1}\Lambda^N_{\tilde r_{i}}(\widetilde{\mathcal{C}}^n_i(\alpha_{i,-}^n,\alpha_{i,+}^n), c_n)+\sum^{m+\widetilde m}_{j=1}\Lambda^N_{k_{j}}(M_j^n(\alpha^n), c_n)\Big).
\end{split}
\end{equation}

 For $1 \leqslant i \leqslant \tilde s$ we define the conformal maps $\widetilde\Psi_i^n\colon (\widetilde{\mathcal{C}}^n_i(\alpha_{i,-}^n,\alpha_{i,+}^n), c_n) \to (\mathbb{S}^2, [g_{can}])$ as 
$$
\widetilde\Psi_i^n(t,\theta)=\frac{1}{e^{2t}+1}(2e^{t}\cos\theta, 2e^{t}\sin\theta,e^{2t}-1).
$$
For $1 \leqslant i \leqslant s$ we define the conformal maps $\Psi_i^n\colon(\mathcal{C}^n_i(0, \alpha_{i}^n), c_n) \to (\mathbb{RP}^2, [g_{can}])$ as the maps, such that their lift to orientable double covers is given by the same formula as $\widetilde\Psi_i^n$. Finally, we take a restriction of a diffeomorphism $\Psi_n^{-1}$ given by Proposition~\ref{D-M} to obtain  a conformal map $\check\Psi_j^n\colon({M}_j^n(\alpha^n),c_n)\to (M_\infty,\Psi_n^*c_n)$.

Let $\Omega_{i}^n \subset \mathbb{RP}^2$, $\widetilde\Omega_{i}^n \subset \mathbb{S}^2$ and $\check\Omega_{j}^n \subset M_\infty$ be the the images of $\Psi_i^n$, $\widetilde\Psi_i^n$ and $\check\Psi_j^n$ respectively.
Since $\alpha^n_i,\,\alpha^n_{i,\pm}\to\infty$, the domains $\Omega_{i}^n$ and $\widetilde\Omega_{i}^n$ exhaust $\mathbb{RP}^2$ and $\mathbb{S}^2$ respectively. The corresponding statement for $\check\Omega_{j}^n$ is the content of the following lemma.

\begin{lemma} 
Let $M^\infty_j$ be the connected component $\check\Psi^n_j(M^n_j)\subset M_\infty$. Then the domains $\check\Omega_j^n$ exhaust $M^\infty_j$. 
\end{lemma}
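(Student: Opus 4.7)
The plan is to prove the stronger statement that for every compact $K\subset M_j^\infty$, the image $\Psi_n(K)$ is contained in $M_j^n(\alpha^n)$ for all sufficiently large $n$; this is equivalent to $K\subset\check\Omega_j^n$. The argument rests on contrasting the injectivity radius on $K$, bounded below in the limit metric $h_\infty$, with the injectivity radius on the removed sub-collars, which collapses to zero under the hypotheses $w^n_i-\alpha^n_i\to\infty$, $\widetilde w^n_i-\alpha^n_{i,\pm}\to\infty$.

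\textbf{Step 1: injectivity lower bound on $K$.} Since $K$ is compact inside the complete hyperbolic surface $(M_j^\infty,h_\infty)$, the injectivity radius has a positive infimum $r_0>0$ on $K$. The $C_{\mathrm{loc}}^\infty$-convergence $\Psi_n^* h_n\to h_\infty$ from Proposition~\ref{D-M} then yields $\inj_{(M,h_n)}(\Psi_n(p))\geqslant r_0/2$ for all $p\in K$ and all $n$ sufficiently large.

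\textbf{Step 2: injectivity vanishes on the removed sub-collars.} In the coordinates of Theorems~\ref{Collar theorem} and~\ref{Collar theorem nor}, the curve $\{t=t_0\}$ in $\widetilde{\mathcal{C}}^n_i$ (resp.~$\{|t|=t_0\}$ in the Möbius collar $\mathcal{C}^n_i$) is freely homotopic to a non-trivial multiple of the core geodesic $c^n_i$, hence non-contractible in $M$. Its hyperbolic length is monotonically increasing in $|t_0|$, so it is maximal at the boundary of the removed sub-collar, namely at $t_0=\alpha^n_{i,\pm}$ or $t_0=\alpha^n_i$. A direct asymptotic expansion of the explicit metric, using the collar-width formulas together with the hypotheses $\widetilde w^n_i - \alpha^n_{i,\pm}\to\infty$ (resp.~$w^n_i - \alpha^n_i\to\infty$) and $|\alpha^n_{i,\pm}|/\widetilde w^n_i\to 1$ (resp.~$\alpha^n_i/w^n_i\to 1$), yields that this maximal length tends to zero. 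Since the injectivity radius at any point is at most half the length of any non-contractible loop through that point, one concludes
\[
\sup\bigl\{\inj_{(M,h_n)}(q): q\in\mathcal{C}^n_i(0,\alpha^n_i)\cup\widetilde{\mathcal{C}}^n_i(\alpha^n_{i,-},\alpha^n_{i,+})\bigr\}\to 0 \quad\text{as } n\to\infty.
\]

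\textbf{Step 3: conclusion.} Combining Steps 1 and 2, for all sufficiently large $n$ the set $\Psi_n(K)$ is disjoint from every removed sub-collar. Under the identification $\Psi_n$, the component $M_j^\infty$ corresponds to the unique connected component of $M\setminus(\text{collapsing geodesics})$ that contains $M_j^n$; therefore each connected piece of $\Psi_n(K)$ lies in the component of $M\setminus(\text{removed sub-collars})$ containing $M_j^n$, which is precisely $M_j^n(\alpha^n)$. Hence $K\subset\Psi_n^{-1}(M_j^n(\alpha^n))=\check\Omega_j^n$, as required.

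\textbf{Main obstacle.} The principal technical step is Step 2: the asymptotic analysis of the circle length at the boundary of the removed sub-collar, requiring separate but analogous computations for the orientable two-sided, non-orientable two-sided, and one-sided collars using the expansion $l(c^n_i)\widetilde w^n_i\to\pi^2$ and its counterparts. Once this quantitative decay is in hand, Steps 1 and 3 follow routinely from Deligne--Mumford convergence (Proposition~\ref{D-M}) and the standard thick-thin decomposition.
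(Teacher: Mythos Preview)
Your argument is correct and rests on the same underlying idea as the paper's proof, namely an injectivity-radius (thick--thin) comparison; the execution, however, is organised differently. The paper works from the limit side: it fixes the $\varepsilon$-thick part $M^\infty_{\geqslant\varepsilon}$ of $(M_\infty,h_\infty)$, invokes an explicit estimate (formula~(4.12) of the reference by Zhu) to show that sub-collars of the form $\mathcal C^n_i(0,\beta^n_i(\varepsilon_1))$ and $\widetilde{\mathcal C}^n_j(-\tilde\beta^n_j(\varepsilon_1),\tilde\beta^n_j(\varepsilon_1))$ are contained in $\Psi^n(M^\infty_{<\varepsilon})$, and then verifies $\alpha^n_i<\beta^n_i(\varepsilon_1)$ for large $n$. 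You instead stay in $(M,h_n)$ and compare injectivity radii directly, which is more self-contained and bypasses the external reference; your Step~2 computation is essentially the verification of $\alpha^n_i<\beta^n_i(\varepsilon_1)$ carried out explicitly.

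One point in Step~1 deserves an extra line: the implication ``$C^\infty_{\mathrm{loc}}$-convergence $\Rightarrow \inj_{(M,h_n)}(\Psi_n(p))\geqslant r_0/2$'' is not automatic, since a short $h_n$-geodesic loop through $\Psi_n(p)$ might a~priori cross a collapsing geodesic and fail to pull back to $M_\infty$. The standard fix is to enlarge $K$ to a compact $K'\subset M^\infty_j$ containing the closed $2r_0$-neighbourhood of $K$ in $h_\infty$; the bi-Lipschitz control of $\Psi_n^*h_n$ against $h_\infty$ on $K'$ then traps any $h_n$-loop of length $<r_0$ based in $\Psi_n(K)$ inside $\Psi_n(K')$. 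Pulled back, such a loop lies in an $h_\infty$-ball of radius $<r_0\leqslant\inj_{h_\infty}(p)$, hence in a contractible set, contradicting that a short geodesic loop in a hyperbolic surface is non-contractible. With this detail supplied, your proof is complete.
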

\begin{proof}
Let $M_\infty=M^\infty_{\geqslant \varepsilon}\cup M^\infty_{<\varepsilon}$ be an $\varepsilon$-thick-thin decomposition  of $(M_\infty, h_\infty)$. For a sufficiently small $0<\varepsilon < \arcsinh(1)$ the $\varepsilon$-thin part  $M^\infty_{<\varepsilon}$ is nothing but subcollars of cusps (see~\cite[Proposition IV.4.2]{MR1451624}). For the surface $(M,h_n)$ we set $l^n_i$ for the length of the $i$-th 1-sided pinching geodesic and $\tilde l^n_{j}$ for the length of the $j$-th 2-sided pinching geodesic, where as before $i=1,\ldots,s$ and $j=1,\ldots,\tilde s$. Consider the diffeomorphism $(\Psi^n)^{-1}\colon M \to M_\infty$. From \cite[formula (4.12)]{Zhu} it follows that for a fixed $\varepsilon$ and for all $\varepsilon_1<\varepsilon$ there exists a number $N_1$ such that for all $n>N_1$ one has 
\begin{align}\label{1}
\cup^{\tilde s}_{j=1}\widetilde{\mathcal C}^n_j(\tilde \beta^n_{j}(\varepsilon_1),\tilde \beta^n_{j}(\varepsilon_1)) \bigcup \cup^{s}_{i=1}\mathcal C^n_i(0,\beta^n_i(\varepsilon_1)) \subseteq \Psi^n\left(M^\infty_{<\varepsilon}\right),
\end{align}
where 
$$
\tilde \beta^n_{j}(\varepsilon_1)=\frac{\pi}{\tilde l^n_{j}}\left(\pi-2\arcsin\left(\frac{\sinh\left(\tilde l^n_{j}/2\right)}{\sinh \varepsilon_1}\right)\right)
$$
and 
$$
\beta^n_i(\varepsilon_1)=\frac{\pi}{2l^n_i}\left(\pi-2\arcsin\left(\frac{\sinh l^n_i}{\sinh \varepsilon_1}\right)\right).
$$
Since $\frac{w^n_i}{l^n_i}\to 1$ and $\frac{\widetilde w^n_i}{\tilde l^n_i}\to 1$, there exists a number $N_2$ such that for every $n>N_2$ one has $\alpha^n_{j,\pm}<\tilde \beta^n_{j}(\varepsilon_1)$ and $\alpha^n_i< \beta^n_i(\varepsilon_1)$. Therefore, for all $n>N_2$ and for all $i,\,j$ one obtains
\begin{align}\label{2}
%
\widetilde{\mathcal C}^n_j(\alpha^n_{j,-},\alpha^n_{j,+})\subset\widetilde{\mathcal C}^n_i(\tilde \beta^n_{j}(\varepsilon_1),\tilde \beta^n_{j}(\varepsilon_1)),\quad \mathcal C^n_i(0,\alpha^n_i)\subset \mathcal C^n_i(0,\beta^n_i(\varepsilon_1))
\end{align}
Then (\ref{1}) and (\ref{2}) imply that for all $n>\max\{N_1,N_2\}$ one has
$$
M\setminus (\Psi^n)^{-1}M^\infty_{<\varepsilon} \subseteq M \setminus \Big(\cup^{\tilde s}_{i=1}\widetilde{\mathcal C}^n_i(\alpha^n_{i,-},\alpha^n_{i,+}) \bigcup \cup^{s}_{i=1}\mathcal C^n_i(0,\alpha^n_i)\Big)=\cup^r_{j=1}M^n_j(\alpha^n).
$$
Applying $\Psi^n$ we then get
$$
M^\infty_{\geqslant\varepsilon} \subseteq \cup^r_{j=1}\check\Omega^n_j.
$$
Since the domain $M^\infty_{\geqslant\varepsilon}$ exhausts $M_\infty$ as $\varepsilon$ goes to 0 we get the same for the domains $\check\Omega^n_j$ as $n$ goes to $\infty$ and the claim follows.\end{proof}
Applying the conformal transformations to~\eqref{aim2} one has
\begin{equation}
\label{aim3}
\begin{split}
&\Lambda_k(M, c_n) \geqslant\\ & \max\left(\sum^s_{i=1}\Lambda^N_{r_{i}}(\Omega_{i}^n, [g_{can}])+\sum^{\tilde s}_{i=1}\Lambda^N_{\tilde r_{i}}(\widetilde \Omega_{i}^n, [g_{can}])+\sum^{m+\widetilde m}_{j=1}\Lambda^N_{k_{j}}(\check\Omega_j^n, [(\Psi^n)^*h_n])\right).
\end{split}
\end{equation}
By Corollary~\ref{Neumann cor2} one has that the first two terms on the right hand side converge to $\Lambda_{r_i}(\mathbb{RP}^n)$ and $\Lambda_{\widetilde r_i}(\mathbb{S}^n)$ respectively.
\begin{lemma}
Let $\widehat{M_j^\infty}\subset \widehat{M_\infty}$ be a closure of $M_j^\infty$. Then for all $m$ one has 
$$
\liminf_{n\to\infty}\Lambda^N_{m}(\check\Omega_j^n, [(\Psi^n)^*h_n])\geqslant \Lambda_m(\widehat{M_j^\infty}, [\widehat{h_\infty}]).
$$
\end{lemma}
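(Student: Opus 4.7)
The plan is to combine three ingredients already established in the paper: the monotonicity of $\Lambda^N_m$ under enlarging the domain (Proposition~\ref{subdomain}), continuity of $\Lambda^N_m$ under $C^\infty$ convergence of metrics in its conformal class (Proposition~\ref{N-cont}), and the approximation of the closed invariant by Neumann invariants of subdomains whose complement shrinks to finitely many points (Corollary~\ref{Neumann cor2}). These three are patched together by a nested exhaustion argument.

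First I would fix a nested exhaustion of $M_j^\infty$ by smooth domains. Writing $\widehat{M_j^\infty}=M_j^\infty\cup\{q_1,\dots,q_r\}$ where $q_1,\dots,q_r$ are the marked points (cusps) contributed to the $j$-th component by Proposition~\ref{D-M}, I would pick a nested sequence $K_\ell\supset K_{\ell+1}$ of smooth open neighbourhoods of $\{q_1,\dots,q_r\}$ in $\widehat{M_j^\infty}$ with $\bigcap_\ell K_\ell=\{q_1,\dots,q_r\}$, and set $\Omega_\ell:=\widehat{M_j^\infty}\setminus K_\ell\subset M_j^\infty$. Since the preceding lemma shows that $\check\Omega_j^n$ exhausts $M_j^\infty$, compactness of $\overline{\Omega_\ell}\subset M_j^\infty$ gives, for each $\ell$, an index $N_\ell$ such that $\overline{\Omega_\ell}\subset\check\Omega_j^n$ for all $n\geqslant N_\ell$. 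The boundary-case of Proposition~\ref{subdomain} then yields
\begin{equation*}
\Lambda^N_m(\check\Omega_j^n,[(\Psi^n)^*h_n])\;\geqslant\;\Lambda^N_m(\Omega_\ell,[(\Psi^n)^*h_n])\qquad\text{for all }n\geqslant N_\ell.
\end{equation*}

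Next I would pass to the limit in $n$ for fixed $\ell$. Because $(\Psi^n)^*h_n\to h_\infty$ in $C^\infty_{\mathrm{loc}}(M_j^\infty)$ by Proposition~\ref{D-M}, and $\overline{\Omega_\ell}$ is compact inside $M_j^\infty$, the restrictions converge in $C^\infty(\overline{\Omega_\ell})$. Proposition~\ref{N-cont} then gives
\begin{equation*}
\lim_{n\to\infty}\Lambda^N_m(\Omega_\ell,[(\Psi^n)^*h_n])\;=\;\Lambda^N_m(\Omega_\ell,[h_\infty|_{\overline{\Omega_\ell}}]).
\end{equation*}
The final remark in Proposition~\ref{D-M} asserts that $h_\infty$ is conformal to $\widehat{h_\infty}|_{M_j^\infty}$, so $[h_\infty|_{\overline{\Omega_\ell}}]=[\widehat{h_\infty}|_{\overline{\Omega_\ell}}]$ and therefore
\begin{equation*}
\liminf_{n\to\infty}\Lambda^N_m(\check\Omega_j^n,[(\Psi^n)^*h_n])\;\geqslant\;\Lambda^N_m(\Omega_\ell,[\widehat{h_\infty}]).
\end{equation*}

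Finally I would let $\ell\to\infty$. The sequence $\{K_\ell\}$ satisfies the hypotheses of Corollary~\ref{Neumann cor2} applied to the closed manifold $(\widehat{M_j^\infty},\widehat{h_\infty})$, giving
\begin{equation*}
\lim_{\ell\to\infty}\Lambda^N_m(\Omega_\ell,[\widehat{h_\infty}])\;=\;\lim_{\ell\to\infty}\Lambda^N_m(\widehat{M_j^\infty}\setminus K_\ell,[\widehat{h_\infty}])\;=\;\Lambda_m(\widehat{M_j^\infty},[\widehat{h_\infty}]),
\end{equation*}
and combining with the previous inequality finishes the proof. The only genuine technical point to verify carefully is the compatibility of the two exhaustions (that $\overline{\Omega_\ell}\subset\check\Omega_j^n$ eventually holds), which is a direct consequence of the preceding lemma together with compactness of $\overline{\Omega_\ell}$; everything else is a clean packaging of already proven results.
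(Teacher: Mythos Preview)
Your proof is correct and follows essentially the same approach as the paper: both combine Proposition~\ref{subdomain}, Proposition~\ref{N-cont}, and Corollary~\ref{Neumann cor2} via a compact exhaustion of $M_j^\infty$. The only cosmetic difference is that the paper phrases the argument with a fixed $\varepsilon>0$ and a single compact $K$ rather than a full nested sequence $\Omega_\ell$, and it silently absorbs the conformality $[h_\infty]=[\widehat{h_\infty}]$ that you made explicit.
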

\begin{proof}

Fix $\varepsilon>0$. An application of Corollary~\ref{Neumann cor2} to a compact exhaustion of $M^\infty_j$ yields the existence of a compact $K\subset M^\infty_j\subset \widehat{M_j^\infty}$ such that 
$$
|\Lambda_m(\widehat{M_j^\infty},[\widehat{h_\infty}]) - \Lambda_m(K,[\widehat{h_\infty}])|<\varepsilon.
$$  
Since $\check\Omega_j^n$ exhaust $M^\infty_j$, then for all large enough $n$ one has $K\subset\check\Omega_j^n$.
Then, by Proposition~\ref{subdomain}
$$
\Lambda^N_{m}(\check\Omega_j^n, [(\Psi^n)^*h_n])\geqslant \Lambda^N_{m}(K, [(\Psi^n)^*h_n]).
$$
Taking $\liminf$ of both sides in the above inequality and using Proposition~\ref{N-cont} yields
$$
\liminf_{n\to\infty}\Lambda^N_{m}(\check\Omega_j^n, [(\Psi^n)^*h_n])\geqslant \Lambda^N_{m}(K, [\widehat{h_\infty}])> \Lambda_m(\widehat{M_j^\infty},[\widehat{h_\infty}])-\varepsilon.
$$
Since $\varepsilon$ is arbitrary, this completes the proof.
\end{proof}

Finally, taking $\liminf_{n\to\infty}$ in~\eqref{aim3} completes the proof of~\eqref{aim}.


\subsection{Inequality $\leqslant$.} We proceed with the inverse inequality,
\begin{equation}\label{aim'}
\begin{split}
&\limsup_{n \to \infty} \Lambda_k (M, c_n) \leqslant\\ &\max \Big(\sum^{\widetilde m}_{i=1} \Lambda_{\widetilde k_i}(\widetilde\Sigma_{\widetilde\gamma_{i}}, c_\infty)+\sum^{m}_{i=1} \Lambda_{k_{i}}(\Sigma_{\gamma_{i}},c_\infty) + \sum_{i=1}^{\widetilde s}\Lambda_{\widetilde r_i}(\mathbb{S}^2) + \sum_{i=1}^s\Lambda_{r_i}(\mathbb{RP}^2) \Big),
\end{split}
\end{equation}
In orientable case, this is essentially proved in~\cite[Section 7]{petrides2017existence}. Below we outline the ideas of the proof and show the necessary modifications in the non-orientable case.

Let us choose a subsequence $c_{n_m}$ such that $$\lim_ {n_m \to \infty} \Lambda_k (M, c_{n_m})=\limsup_{n \to \infty} \Lambda_k (M, c_n).$$ We immediately relabel the subsequence and denote it by $\{c_n\}$. This way we can choose further subsequences without changing the value of $\limsup$.

{\bf Case 1.} Suppose that up to a choice of a subsequence the following inequality holds
\begin{align*}
\Lambda_k(M, c_n) > \Lambda_{k-1}(M, c_n) +8\pi.
\end{align*}
Then by \cite[Theorem 2]{petrides2017existence} in the conformal class $c_n$ there exists a unit volume metric $g_n$ induced from a harmonic immersion $\Phi_n$ to some $N(n)$-dimensional sphere $\mathbb{S}^{N(n)}$, i.e.
$$
g_n=\frac{|\nabla \Phi_n|^2_{h_n}}{\Lambda_k(M, c_n)}h_n
$$
and such that $\lambda_k(g_n)=\Lambda_k(M, c_n)$. Here the metric $h_n$ is the canonical representative in the conformal class $c_n$. It is known that for any compact surface the multiplicity of $\lambda_k(g_n)$ is bounded from above by a constant depending only on $k$ and $\gamma$ (see for instance~\cite{cheng1976eigenfunctions} for orientable surfaces and \cite{besson1980multiplicite, Nad98} for non-orientable surfaces). Therefore, one can choose the number $N(n)$ large enough such that $N(n)$ does not depend on $n$. 

Assume that for the sequence $\{c_n\}$ the following inequality holds

\begin{equation}\label{gap}
\begin{split}
&\lim_{n \to \infty} \Lambda_k (M, c_n) >\\ &\max \left(\sum^{\widetilde m}_{i=1} \Lambda_{\widetilde k_i}(\widetilde\Sigma_{\widetilde\gamma_{i}}, c_\infty)+\sum^{m}_{i=1} \Lambda_{k_{i}}(\Sigma_{\gamma_{i}},c_\infty) + \sum_{i=1}^{\widetilde s}\Lambda_{\widetilde r_i}(\mathbb{S}^2) + \sum_{i=1}^s\Lambda_{r_i}(\mathbb{RP}^2) \right).
\end{split}
\end{equation}


\begin{proposition} \label{sequences}
 For $1 \leqslant i \leqslant s$ there exist integers $t_{i} \geqslant 0$, non-negative sequences $\{a_{i,l}^n\}, \{b_{i,l}^n\}$ with $1 \leqslant l \leqslant t_{i}$ and a sequence $\{\alpha^n_i\}$ such that
\begin{gather*}
0\leqslant a_{i,t_i}^n \ll b_{i,t_i}^n \ll ... \ll a_{i,1}^n \ll b_{i,1}^n \ll a_{i,0}^n=\alpha_{i}^n \ll w_i^n
\end{gather*}
and 
$$
m_{i,l}=\lim_{n \to \infty} \Vol(\mathcal{C}_i^n(a_{i,l}^n,b_{i,l}^n), g_n)>0.
$$
Similarly for $1 \leqslant i \leqslant \tilde s$ there exist integers $\tilde t_{i} \geqslant 0$, sequences $\{a_{i,l}^n\}, \{b_{i,l}^n\}$ where $1 \leqslant l \leqslant \tilde t_{i}$ and sequences $\{\alpha_{i,\pm}^n\}$ such that 
\begin{gather*}
-w_i^n \ll \alpha_{i,-}^n=b_{i,0}^n \ll a_{i,1}^n \ll b_{i,1}^n \ll \ldots \ll a_{i,t_i}^n \ll b_{i,t_{i}+1}^n \ll a_{i,t_{i+1}}^n=\alpha_{i,+}^n \ll w_i^n
\end{gather*}
and 
$$
\widetilde m_{i,l}=\lim_{n \to \infty} \Vol(\widetilde{\mathcal{C}}_i^n(a_{i,l}^n,b_{i,l}^n), g_n)>0.
$$  
Moreover, there exists a set $J \subset \{1,\ldots,m+\widetilde m\}$ such that for every $j \in J$ one has
$$
m_{j}=\lim_{n \to \infty} \Vol(M_j^n(\alpha_n), g_n)>0
$$
satisfying 
$$
\sum^s_{i=1} \sum^{t_i}_{l=1}m_{i,l}+\sum^{\tilde s}_{i=1} \sum^{\tilde t_i}_{l=1}\widetilde m_{i,l}+\sum_{j\in J}m_j=1,
$$
with $\sum^s_{i=1}t_i+\sum^{\tilde s}_{i=1}{\tilde t_i}\leqslant k$ is maximal.
\end{proposition}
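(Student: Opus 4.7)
The plan is to perform a bubble-extraction argument on the unit-volume harmonic immersion metrics $g_n$ by tracking where volume accumulates along each collapsing collar. First, choose $\alpha_i^n,\alpha_{i,\pm}^n\to\infty$ with $w_i^n-\alpha_i^n\ll w_i^n$ and $\widetilde w_i^n-\alpha_{i,\pm}^n\ll \widetilde w_i^n$. By a diagonal extraction, pass to a subsequence along which the three families of volumes $\Vol(\mathcal{C}_i^n(0,\alpha_i^n),g_n)$, $\Vol(\widetilde{\mathcal{C}}_i^n(\alpha_{i,-}^n,\alpha_{i,+}^n),g_n)$, and $\Vol(M_j^n(\alpha^n),g_n)$ all converge, and let $J$ consist of those $j$ for which the third limit $m_j$ is strictly positive. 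Since $g_n$ has unit volume, the remaining mass (summing to $1-\sum_{j\in J}m_j$) lies in the truncated collars.

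Second, inside each collar I iterate a scale-detection procedure. For a 2-sided collar $\widetilde{\mathcal{C}}_i^n$, view the cumulative mass $F_i^n(t)=\Vol(\widetilde{\mathcal{C}}_i^n(\alpha_{i,-}^n,t),g_n)$ as a function of the cylindrical coordinate $t$. As long as a positive amount of the total collar mass survives in the limit, find the longest subinterval $[a_{i,l}^n,b_{i,l}^n]\subset(\alpha_{i,-}^n,\alpha_{i,+}^n)$ with $a_{i,l}^n\ll b_{i,l}^n$ on which a positive fraction of the mass concentrates; under the conformal map $\widetilde\Psi_i^n$ this interval exhausts $\mathbb{S}^2$ minus two small caps, giving an $\mathbb{S}^2$-bubble with limit mass $\widetilde m_{i,l}>0$. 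Then restrict to the complement and iterate. The 1-sided case is analogous, using $\Psi_i^n$ to give $\mathbb{RP}^2$-bubbles with mass $m_{i,l}>0$. By construction the scales satisfy the required ordering and $\sum m_{i,l}+\sum\widetilde m_{i,l}+\sum_{j\in J}m_j=1$.

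Third, the bound $\sum t_i+\sum\widetilde t_i\leqslant k$ comes from eigenvalue counting. By cutoff and rescaling, each extracted bubble produces, in the limit, a nontrivial eigenfunction of $\mathbb{S}^2$ or $\mathbb{RP}^2$ realising $\Lambda_{r_i}(\mathbb{RP}^2)$ or $\Lambda_{\widetilde r_i}(\mathbb{S}^2)$ respectively; similarly each component $j\in J$ contributes eigenfunctions realising $\Lambda_{k_j}$ of the corresponding limit piece. Applying the hypothesis~\eqref{gap} together with Lemma~\ref{disconnected} shows that if the number of bubbles exceeded $k$, the additivity of eigenvalue indices across bubbles and limit components would force $\lim\Lambda_k(M,c_n)$ to be strictly less than the right-hand side, contradicting~\eqref{gap}. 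Maximality of the configuration is then built into the algorithm, which detects every scale carrying positive mass in the limit.

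The main obstacle is the third step: one must make precise the concentration/decoupling of the first $k+1$ eigenfunctions across bubbles and limit components, so that their contributions to the spectral count are genuinely additive. This follows the strategy of \cite[Section 7]{petrides2017existence} for orientable surfaces, and the non-orientable modifications are transparent because the $\mathbb{RP}^2$-bubble arising from a 1-sided geodesic is obtained by $\tau$-equivariant quotient of the orientable cylindrical bubble, so the eigenfunction concentration arguments transfer verbatim after passing to the double cover.
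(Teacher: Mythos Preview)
Your first two steps---choosing the cut-off scales $\alpha^n$ and iterating a mass-detection procedure along each collar---are broadly in line with the construction in \cite[Claims~16--17]{petrides2017existence}, to which the paper's proof simply defers. The difficulty is your third step, the bound $\sum t_i+\sum\widetilde t_i\leqslant k$.

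You invoke the hypothesis~\eqref{gap} together with ``additivity of eigenvalue indices across bubbles'' and speak of each bubble producing a ``nontrivial eigenfunction realising $\Lambda_{r_i}$''. That is the machinery used \emph{after} this proposition, in the harmonic-map bubble-convergence step that yields the upper bound~\eqref{aim'}; invoking it here is circular. And even on its own terms the inequality goes the wrong way: adding more pieces to the partition in Lemma~\ref{disconnected} cannot decrease the maximum, since one may assign index~$0$ to any surplus piece. So no contradiction with~\eqref{gap} is produced.

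The correct argument is elementary and does not use~\eqref{gap} at all. Suppose your extraction yields $k+1$ mass-carrying subcylinders separated by necks of diverging conformal length and vanishing $g_n$-volume (this is exactly what ``denying the proposition'' means). On each such piece take a plateau test function: identically~$1$ there, identically~$0$ on the other pieces, with linear transition across the intervening necks. In the conformal cylinder coordinates the Dirichlet energy of each transition is $O(1/\text{neck length})\to 0$ (and is conformally invariant in dimension~$2$), while the $L^2(g_n)$-norm is bounded below by the positive limit mass of the piece. These $k+1$ functions therefore force $\lambda_k(M,g_n)\to 0$, and since $\Vol(M,g_n)=1$ this contradicts $\bar\lambda_k(M,g_n)=\Lambda_k(M,c_n)\geqslant\Lambda_k(\mathbb{S}^2)=8\pi k>0$ from~\eqref{bruno}. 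This is precisely the paper's one-line proof. Once the bound is in place, maximality is automatic: $\sum t_i+\sum\widetilde t_i$ is a bounded non-negative integer.
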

\begin{proof}
The proof follows the proofs of Claim 16, Claim 17 by \cite{petrides2017existence}. Precisely, denying the proposition one can construct $k+1$ test-functions such that $\lambda_k(g_n) \leqslant o(1)$ which contradicts inequality~\eqref{bruno}.
\end{proof}

We proceed with considering a sequence $\{d_{i,l}^n\}$ where $1\leqslant i \leqslant s$ and $1 \leqslant l \leqslant t_i$ such that 
$$
\lim_{n \to \infty} \Vol(\mathcal{C}_i^n(a_{i,l}^n,d_{i,l}^n),g_n)= \lim_{n \to \infty} \Vol(\mathcal{C}_i^n(d_{i,l}^n,b_{i,l}^n),g_n)=m_{i,l}/2
$$
and a sequence $\{\widetilde d_{i,l}^n\}$ where $1\leqslant i \leqslant \tilde s$ and $1 \leqslant l \leqslant \tilde t_i$ such that 
$$
\lim_{n \to \infty} \Vol(\widetilde{\mathcal{C}}_i^n(a_{i,l}^n,\widetilde d_{i,l}^n),g_n)= \lim_{n \to \infty} \Vol(\widetilde{\mathcal{C}}_i^n(\widetilde d_{i,l}^n,b_{i,l}^n),g_n)= \widetilde m_{i,l}/2.
$$
For $1\leqslant i \leqslant \tilde s$ let $\widetilde q^n_{i,l}\ll a^n_{i,l}$, $\widetilde q^n_{i,l}\to+\infty$. Consider the conformal maps\\ $\widetilde\Psi_{i,l}^n\colon \left(\widetilde{\mathcal{C}}_i^n(a_{i,l}^n-\widetilde q^n_{i,l},b_{i,l}^n+\widetilde q^n_{i,l}),c_n\right) \to (\mathbb{S}^2,[g_{can}])$ defined as
$$
\widetilde \Psi_{i,l}^n(t,\theta)=\frac{1}{e^{2(t-\widetilde d_{i,l}^n)}+1}(2e^{t-\widetilde d_{i,l}^n}\cos\theta, 2e^{t-\widetilde d_{i,l}^n}\sin\theta,e^{2(t-\widetilde d_{i,l}^n)}-1).
$$
Let $\widetilde \Omega_{i,l}^n\subset\mathbb{S}^2$ be the image of this map.
Let $\widetilde \Phi_{i,l}^n=\Phi_n \circ (\widetilde \Psi_{i,l}^n)^{-1}:(\widetilde \Omega_{i,l}^n, g_{can}) \to (\mathbb{S}^N, g_{can})$. Then $\widetilde \Phi_{i,l}^n$ is harmonic since $\Phi_n$ is harmonic and $\widetilde \Psi_{i,l}^n$ is conformal.
Moreover, it is shown in \cite{petrides2017existence} that the measure $\boldsymbol{1}_{\widetilde\Omega_{i,l}^n}|\nabla \widetilde \Phi_{i,l}^n|^2_{g_{can}}dv_{g_{can}}$ does not concentrate at the poles $(0,0,1)$ and $(0,0,-1)$ of $\mathbb{S}^2$. Indeed, if the measure concentrated at the poles then one would obtain a contradiction with the maximality of $\sum^s_{i=1}t_i+\sum^{\tilde s}_{i=1}{\tilde t_i}$.

Similarly, for $1\leqslant i \leqslant s$ if $a^n_{i,l}\ne 0$ let $0<q^n_{i,l}\ll a^n_{i,l}$, $q^n_{i,l}\to+\infty$, otherwise let $0<q^n_{i,l}\ll b^n_{i,l}$, $q^n_{i,l}\to+\infty$. If $a^n_{i,l}\ne 0$ consider the conformal maps $\Psi_{i,l}^n\colon \left({\mathcal{C}}_i^n(a_{i,l}^n-q^n_{i,l},b_{i,l}^n+ q^n_{i,l}),c_n\right) \to (\mathbb{S}^2,[g_{can}])$ defined on the orientable double covers as
$$
 \Psi_{i,l}^n(t,\theta)=\frac{1}{e^{2(t- d_{i,l}^n)}+1}(2e^{t- d_{i,l}^n}\cos\theta, 2e^{t- d_{i,l}^n}\sin\theta,e^{2(t- d_{i,l}^n)}-1).
$$
If $a^n_{i,l}= 0$, then $\Psi_{i,l}^n$ is defined in the same way, the only difference is that the domain is ${\mathcal{C}}_i^n(a_{i,l}^n,b_{i,l}^n+ q^n_{i,l})$. Either way, let $ \Omega_{i,l}^n\subset\mathbb{RP}^2$ be the image of this map.
Let $\Phi_{i,l}^n=\Phi_n \circ ( \Psi_{i,l}^n)^{-1}:(\Omega_{i,l}^n, g_{can}) \to (\mathbb{S}^N, g_{can})$. Then $\widetilde \Phi_{i,l}^n$ is harmonic since $\Phi_n$ is harmonic and $\Psi_{i,l}^n$ is conformal.
Similarly to the previous paragraph one has that the measure $\boldsymbol{1}_{\Omega_{i,l}^n}|\nabla \Phi_{i,l}^n|^2_{g_{can}}dv_{g_{can}}$ does not concentrate at the antipodal image of the pole $(0,0,1)$ in $\mathbb{RP}^2$. 


The exactly same procedure can be carried out for components $M_j^n(\alpha)$, $j\in J$. The only difference is that now we use the restriction of diffeomorphisms $\Psi^n$ given by Proposition~\ref{D-M} instead of the explicit harmonic map as above. As a result, one obtains domains $\check\Omega^n_j\subset M_\infty$ and harmonic maps $\check\Phi^n_j\colon\check\Omega^n_j\to\mathbb{S}^N$ such that the measure $\boldsymbol{1}_{\check\Omega_j^n}|\nabla\check \Phi_{j}^n|^2_{g_{can}}dv_{g_{can}}$ does not concentrate at the marked points of $\widehat{M_\infty}$.


As the next step, one applies bubble convergence theorem for harmonic maps and the non-concentration results above to choose a subsequence such that the measures $\boldsymbol{1}_{\widetilde\Omega_{i,l}^n}|\nabla \widetilde \Phi_{i,l}^n|^2_{g_{can}}dv_{g_{can}}$, $\boldsymbol{1}_{\Omega_{i,l}^n}|\nabla \Phi_{i,l}^n|^2_{g_{can}}dv_{g_{can}}$ and $\boldsymbol{1}_{\check\Omega_j^n}|\nabla\check \Phi_{j}^n|^2_{g_{can}}dv_{g_{can}}$ converge in $*$-weak topology.
One then uses eigenfunctions of limiting measures (and eigenfunctions on bubbles of $\{\Psi_j^n\}$ if bubbles exist) as test-functions for $\lambda_k(M,g_n)$. Since bubble convergence does not require the domain to be orientable and the construction of eigenfunctions supported on bubbles is local, this argument carries over to the non-orientable case without any changes. For further details, see~\cite[Section 7]{petrides2017existence}.

As a result, one obtains the following inequality
\begin{equation*}
\begin{split}
&\limsup_{n\to\infty}\Lambda_k(M,c_n)\leqslant \\ & \sum_{j\in J}\Lambda_{k_j}(\widehat{M^\infty_j},c_\infty) + \sum_{i=1}^{\widetilde s}\sum_{l=1}^{\widetilde t_i}\Lambda_{\widetilde r_{i,l}}(\mathbb{S}^2) + \sum_{i=1}^s\left(\sum_{l=1}^{t_i-1}\Lambda_{r_{i,l}}(\mathbb{S}^2) + \Lambda_{r_{i,t_i}}(S_i)\right),
\end{split}
\end{equation*}
where $S_i=\mathbb{RP}^2$ if the sequence $\{a^n_{i,t_i}\}_n$ contains infinitely many zeros, $S_i=\mathbb{S}^2$ otherwise, and
$$
\sum_{j\in J} k_j + \sum_{i=1}^{\widetilde s}\sum_{l=1}^{\widetilde t_i}\widetilde r_{i,l} + \sum_{i=1}^s\sum_{l=1}^{t_i}r_{i,l}\leqslant k.
$$
Finally, an application of inequality~\eqref{colbois} allows us to group together the terms with the same index $i$ to obtain inequality~\eqref{aim'}.

{\bf Case 2.} Assume that up to a choice of  a subsequence the following inequality holds
\begin{align*}
\Lambda_k(M, c_n) \leqslant \Lambda_{k-1}(M, c_n) +8\pi
\end{align*}
then we prove inequality (\ref{aim'}) by induction.

Note that if $k=1$ then by Theorem~\ref{roman} $\Lambda_1(M, [h_n]) > 8\pi$, i.e. $k=1$ falls under Case 1. Therefore, the inequality~\eqref{aim'} holds for $k=1$. This is the base of induction.

Suppose that the proposition holds for all numbers $k'\leqslant k$. We show that it also holds for $k+1$. Indeed, one has
\begin{align*}
\Lambda_{k+1}(M, c_n) \leqslant \Lambda_{k}(M, c_n)+8\pi=\Lambda_{k}(M, c_n)+\Lambda_1(\mathbb{S}^2)
\end{align*}
and inequality (\ref{aim'}) holds then we get 

\begin{gather*}
\begin{split}
&\lim_{n \to \infty} \Lambda_{k+1} (M, c_n) \leqslant\\ &\max \Big(\sum^{\widetilde m}_{i=1} \Lambda_{\widetilde k_i}(\widetilde\Sigma_{\widetilde\gamma_{i}}, c_\infty)+\sum^{m}_{i=1} \Lambda_{k_{i}}(\Sigma_{\gamma_{i}},c_\infty) + \sum_{i=1}^{\widetilde s}\Lambda_{\widetilde r_i}(\mathbb{S}^2) + \sum_{i=1}^s\Lambda_{r_i}(\mathbb{RP}^2) \Big)+\Lambda_1(\mathbb{S}^2) \leqslant \\& \leqslant \max \Big(\sum^{\widetilde m}_{i=1} \Lambda_{\widetilde k_i'}(\widetilde\Sigma_{\widetilde\gamma_{i}}, c_\infty)+\sum^{m}_{i=1} \Lambda_{k_{i}'}(\Sigma_{\gamma_{i}},c_\infty) + \sum_{i=1}^{\widetilde s}\Lambda_{\widetilde r_i'}(\mathbb{S}^2) + \sum_{i=1}^s\Lambda_{r_i'}(\mathbb{RP}^2) \Big),
\end{split}
\end{gather*}
where the term $\Lambda_1(\mathbb{S}^2)$ was absorbed by one of the terms inside $\max$ using inequality~\eqref{colbois}, and the last maximum is taken over all possible combinations of indices such that
 $$
 \sum_{i=1}^m k_i' + \sum_{i=1}^{\widetilde m}\widetilde k_i' + \sum_{i=1}^s r_i' + \sum_{i=1}^{\widetilde s}\widetilde r_i' = k+1.
 $$

\subsection{Non-hyperbolic case.} 
If $M=\mathbb{KL}$ or $M=\mathbb{T}^2$ the proof is very similar. Indeed, as it follows from the discussion in Section~\ref{non-neg} for degenerating sequence one can find a collapsing geodesic and the whole surface becomes a flat collar of width $w_n\to+\infty$. An analog of Proposition~\ref{sequences} is proved in exactly the same way. The only difference in the rest of the proof is the fact that there is at most one domain $M^n_j(\alpha^n)$ and it is a flat cylinder or a M\"obius band. Therefore, to construct $\check\Phi^n_j$ instead of the Deligne-Mumford compactification one uses the same construction as for $\widetilde\Phi^n_{i,l}$ or $\Phi^n_{i,l}$.

\bibliography{mybib}

\begin{thebibliography}{CKM19}

\bibitem[Ann87]{anne1987spectre}
C.~Ann{\'e}.
\newblock Spectre du {L}aplacien et {\'e}crasement d'anses.
\newblock In {\em Annales scientifiques de l'Ecole normale sup{\'e}rieure},
  volume~20, pages 271--280, 1987.

\bibitem[Bes80]{besson1980multiplicite}
G.~Besson.
\newblock Sur la multiplicit{\'e} de la premi{\`e}re valeur propre des surfaces
  riemanniennes.
\newblock 30(1):109--128, 1980.

\bibitem[BS92]{MR1174602}
P.~Buser and M.~Sepp\"al\"a.
\newblock Symmetric pants decompositions of {R}iemann surfaces.
\newblock {\em Duke Math. J.}, 67(1):39--55, 1992.

\bibitem[Bus92]{MR1183224}
P.~Buser.
\newblock {\em Geometry and spectra of compact {R}iemann surfaces}, volume 106
  of {\em Progress in Mathematics}.
\newblock Birkh\"auser Boston, Inc., Boston, MA, 1992.

\bibitem[CD94]{MR1213857}
B.~Colbois and J.~Dodziuk.
\newblock Riemannian metrics with large {$\lambda_1$}.
\newblock {\em Proc. Amer. Math. Soc.}, 122(3):905--906, 1994.

\bibitem[CES03]{MR2015867}
B.~Colbois and A.~El~Soufi.
\newblock Extremal eigenvalues of the {L}aplacian in a conformal class of
  metrics: the `conformal spectrum'.
\newblock {\em Ann. Global Anal. Geom.}, 24(4):337--349, 2003.

\bibitem[Che76]{cheng1976eigenfunctions}
S.-Y. Cheng.
\newblock Eigenfunctions and nodal sets.
\newblock {\em Commentarii Mathematici Helvetici}, 51(1):43--55, 1976.

\bibitem[CKM19]{cianci2019branched}
D.~Cianci, M.~Karpukhin, and V.~Medvedev.
\newblock On branched minimal immersions of surfaces by first eigenfunctions.
\newblock {\em Annals of Global Analysis and Geometry}, 56(4):667--690, 2019.

\bibitem[Dod82]{MR656119}
J.~Dodziuk.
\newblock Eigenvalues of the {L}aplacian on forms.
\newblock {\em Proc. Amer. Math. Soc.}, 85(3):437--443, 1982.

\bibitem[EPS15]{enciso2015}
A.~Enciso and D.~Peralta-Salas.
\newblock Eigenfunctions with prescribed nodal sets.
\newblock {\em Journal of Differential Geometry}, 101(2):197--211, 2015.

\bibitem[ESI08]{MR2378458}
A.~El~Soufi and S.~Ilias.
\newblock Laplacian eigenvalue functionals and metric deformations on compact
  manifolds.
\newblock {\em J. Geom. Phys.}, 58(1):89--104, 2008.

\bibitem[ESIR96]{el1996premiere}
A.~El~Soufi, S.~Ilias, and A.~Ros.
\newblock Sur la premi\`ere valeur propre des tores.
\newblock {\em S{\'e}minaire de th{\'e}orie spectrale et g{\'e}om{\'e}trie},
  15:17--23, 1996.

\bibitem[FN99]{MR1717641}
L.~Friedlander and N.~Nadirashvili.
\newblock A differential invariant related to the first eigenvalue of the
  {L}aplacian.
\newblock {\em Internat. Math. Res. Notices}, (17):939--952, 1999.

\bibitem[Gir09]{MR2514484}
A.~Girouard.
\newblock Fundamental tone, concentration of density, and conformal
  degeneration on surfaces.
\newblock {\em Canad. J. Math.}, 61(3):548--565, 2009.

\bibitem[GL20]{girouard-lagace}
A.~Girouard and J.~Lagac\'e.
\newblock Large {S}teklov eigenvalues via homogenisation on manifolds.
\newblock {\em preprint arXiv:2004.04044}, 2020.

\bibitem[Has11]{hassannezhad2011}
A.~Hassannezhad.
\newblock Conformal upper bounds for the eigenvalues of the {L}aplacian and
  {S}teklov problem.
\newblock {\em Journal of Functional analysis}, 261(12):3419--3436, 2011.

\bibitem[Her70]{MR0292357}
J.~Hersch.
\newblock Quatre propri\'et\'es isop\'erim\'etriques de membranes sph\'eriques
  homog\`enes.
\newblock {\em C. R. Acad. Sci. Paris S\'er. A-B}, 270:A1645--A1648, 1970.

\bibitem[Hum97]{MR1451624}
C.~Hummel.
\newblock {\em Gromov's compactness theorem for pseudo-holomorphic curves},
  volume 151 of {\em Progress in Mathematics}.
\newblock Birkh\"auser Verlag, Basel, 1997.

\bibitem[Jam08]{Jammes2008}
P.~Jammes.
\newblock Prem\`ere valeur propre du {L}aplacien, volume conforme et
  chirurgies.
\newblock {\em Geomeriae Dedicata}, 135(1):29--37, 2008.

\bibitem[Kar19]{karpukhin2019index}
M.~Karpukhin.
\newblock Index of minimal spheres and isoperimetric eigenvalue inequalities.
\newblock {\em preprint arXiv:1905.03174}, 2019.

\bibitem[KNPP]{karpukhin2017isoperimetric}
M.~Karpukhin, N.~Nadirashvili, A.~V. Penskoi, and I.~Polterovich.
\newblock An isoperimetric inequality for {L}aplace eigenvalues on the sphere.
\newblock {\em preprint arXiv:1706.05713}.

\bibitem[Kor93]{MR1198600}
N.~Korevaar.
\newblock Upper bounds for eigenvalues of conformal metrics.
\newblock {\em J. Differential Geom.}, 37(1):73--93, 1993.

\bibitem[KS20]{karpukhin-stern}
M.~Karpukhin and D.~L. Stern.
\newblock Min-max harmonic maps and a new characterization of conformal
  eigenvalues.
\newblock {\em preprint arXiv:2004.04086}, 2020.

\bibitem[LY82]{MR674407}
P.~Li and S.~T. Yau.
\newblock A new conformal invariant and its applications to the {W}illmore
  conjecture and the first eigenvalue of compact surfaces.
\newblock {\em Invent. Math.}, 69(2):269--291, 1982.

\bibitem[Med20]{medved}
V.~Medvedev.
\newblock Degenerating sequences of conformal classes and the conformal
  {S}teklov spectrum.
\newblock {\em preprint arXiv:2004.13776}, 2020.

\bibitem[Mil65]{milnor}
J.~Milnor.
\newblock {\em Lectures on the {$h$}-cobordism theorem}.
\newblock Notes by L. Siebenmann and J. Sondow. Princeton University Press,
  Princeton, N.J., 1965.

\bibitem[MS74]{MilnorStasheff}
J.~W. Milnor and J.~D. Stasheff.
\newblock {\em Characteristic classes}.
\newblock Princeton University Press, Princeton, N. J.; University of Tokyo
  Press, Tokyo, 1974.
\newblock Annals of Mathematics Studies, No. 76.

\bibitem[MS19]{matthiesen2019handle}
H.~Matthiesen and A.~Siffert.
\newblock Handle attachment and the normalized first eigenvalue.
\newblock {\em preprint arXiv:1909.03105}, 2019.

\bibitem[Nad88]{Nad98}
N.~S. Nadirashvili.
\newblock Multiple eigenvalues of the {L}aplace operator.
\newblock {\em Sbornik: Mathematics}, 61(1):225--238, 1988.

\bibitem[Nad96]{MR1415764}
N.~Nadirashvili.
\newblock Berger's isoperimetric problem and minimal immersions of surfaces.
\newblock {\em Geom. Funct. Anal.}, 6(5):877--897, 1996.

\bibitem[NP18]{nadirashvili_preprint}
N.~Nadirashvili and A.~Penskoi.
\newblock An isoperimetric inequality for the second non-zero eigenvalue of the
  {L}aplace-{B}eltrami operator on the projective plane.
\newblock {\em Geom. Funct. Anal.}, 28(5):1368--1393, 2018.

\bibitem[NS19]{nayatani}
S.~Nayatani and T.~Shoda.
\newblock Metrics on a closed surface of genus two which maximize the first
  eigenvalue of the {L}aplacian.
\newblock {\em Comptes Rendus Mathematique}, 357(1):84--98, 2019.

\bibitem[Pen19]{MR4017613}
A.~V. Pensko\u{\i}.
\newblock Isoperimetric inequalities for higher eigenvalues of the
  {L}aplace-{B}eltrami operator on surfaces ({R}ussian).
\newblock {\em Trudy Matematicheskogo Instituta Imeni V. A. Steklova},
  305(Algebraicheskaya Topologiya Kombinatorika i Matematicheskaya
  Fizika):291--308, 2019.

\bibitem[Pet14]{petrides}
R.~Petrides.
\newblock Existence and regularity of maximal metrics for the first {L}aplace
  eigenvalue on surfaces.
\newblock {\em Geom. Funct. Anal.}, 24(4):1336--1376, 2014.

\bibitem[Pet15]{petrides2013rigidity}
R.~Petrides.
\newblock On a rigidity result for the first conformal eigenvalue of the
  {L}aplacian.
\newblock {\em J. Spectr. Theory}, 5(1):227--234, 2015.

\bibitem[Pet18]{petrides2017existence}
R.~Petrides.
\newblock On the existence of metrics which maximize {L}aplace eigenvalues on
  surfaces.
\newblock {\em International Mathematics Research Notices},
  2018(14):4261--4355, 2018.

\bibitem[Sep91]{Seppala1991}
M.~Sepp\"al\"a.
\newblock Moduli spaces of stable real algebraic curves.
\newblock {\em Ann. scient. {\'E}c. Norm. Sup.}, 24(5):519--544, 1991.

\bibitem[Tay11]{Taylor}
M.~E. Taylor.
\newblock {\em Partial {D}ifferential {E}quations {I}. {B}asic {T}heory}.
\newblock Applied Mathematical Sciences 117, 2nd edition, 2011.

\bibitem[WK94]{WK94}
S.~A. Wolf and J.~B. Keller.
\newblock Range of the first two eigenvalues of the {L}aplacian.
\newblock {\em Proc. R. Soc. Lond. A.}, 447(1930):397--412, 1994.

\bibitem[Zhu10]{Zhu}
M.~Zhu.
\newblock Harmonic maps from degenerating {R}iemann surfaces.
\newblock {\em Math. Z.}, 264(1):63--85, 2010.

\end{thebibliography}
\bibliographystyle{alpha}

\end{document}